\newcommand{\C}{\mathbb{C}}
\newcommand{\R}{\mathbb{R}}
\newcommand{\Z}{\mathbb{Z}}
\newtheorem{theorem}{Theorem}[section]
\newtheorem{lemma}[theorem]{Lemma}
\newtheorem{definition}[theorem]{Definition}
\newtheorem{proposition}[theorem]{Proposition}
\newtheorem{corollary}[theorem]{Corollary}
\newtheorem{conjecture}[theorem]{Conjecture}
\newtheoremstyle{BoldRemark} 
{\topsep}                    
{\topsep}                    
{\upshape}                   
{}                           
{\bfseries}                  
{.}                          
{.5em}                       
{}  
\theoremstyle{BoldRemark}
\newtheorem{remark}[theorem]{Remark}
\newtheorem{construction}[theorem]{Construction}
\title{Real Global Group Laws and Hu-Kriz Maps}
\author{Jack Carlisle, Noah Wisdom, Guoqi Yan}
\date{}
\begin{document}

\maketitle
\begin{abstract}
        Recently, Hausmann defined global group laws and used them to prove that $MU^G_*$ is the $G$-equivariant Lazard ring, for $G$ a compact abelian Lie group. On the other hand, Hu and Kriz showed that the restriction map induces an isomorphism $M \R^{C_2}_{\rho *} \cong MU_{2*}$. In this paper, we blend these stories. We utilize the $C_2$-global spectrum $\mathbf{MR}$ defined by Schwede in an unpublished note, which gives rise to a genuine $G$-spectrum $M \mathbb{R}_\eta$ for each augmented compact Lie groups $\eta: G\to C_2$, simultaneously generalizing $MU_G$ and $M\R$. In the case of semi-direct product augmentations $G \rtimes C_2\to C_2$ with $G$ compact abelian Lie and $C_2$ acting by inversion, we show that the restriction along the inclusion $G \subset G \rtimes C_2$ is a split surjection $M \R^{G \rtimes C_2}_{\rho *} \rightarrow MU^{G}_{2*}$. Additionally, we propose an evenness conjecture, which implies that this map is an isomorphism. Along the way, we define Real $\eta$-orientations, Real global orientations, and corresponding notions of equivariant and global group laws.
\end{abstract}
\tableofcontents

\section{Introduction}

The complex cobordism spectrum $MU$ is perhaps the most important object in chromatic homotopy theory. Letting $C_2$ act by complex conjugation gives rise to a (genuine) $C_2$ spectrum $M \mathbb{R}$ which is important in equivariant chromatic homotopy theory (roughly speaking, this is the program of studying classical chromatic homotopy theory through natural group actions on classically important chromatic spectra, such as the Morava stabilizer group action on Lubin-Tate theory and the work of Hill-Hopkins-Ravenel \cite{HHR}, and includes the theory of odd primary orientations recorded by \cite{HSW20}). Additionally, $M \mathbb{R}$ carries the universal Real orientation \cite{Ara79} and is related to the cobordism theory of Real manifolds \cite{Hu99} \cite{HK01}.

On the other hand, there is a different genuine $C_2$-spectrum, $MU_{C_2}$, important in chromatic equivariant homotopy theory, which is the program of attempting to mimic the classical chromatic story in the genuine equivariant setting. In fact, for every compact Lie group $G$, there is a genuine $G$-spectrum $MU_G$ whose underlying spectrum is $MU$. This spectrum is universal one with Thom isomorphisms for $G$-equivariant vector bundles \cite{Oko82} \cite{Cos87}. Additionally, Hausmann has shown that, for $G$ abelian, $MU_G^*$ carries the universal $G$-formal group law, and Hausmann and Meier have shown that the spectrum of invariant prime ideals of $MU_G^*$ is naturally isomorphic to the Balmer spectrum of the category of genuine $G$-spectra \cite{Hau22} \cite{HM23}.

The present article began as a way to quantify the difference between $M \mathbb{R}$ and $MU_{C_2}$. It turns out that there is actually a genuine $C_2 \times C_2$-spectrum $M \mathbb{R}_{C_2}$ for which restriction to one copy of $C_2$ yields $M \mathbb{R}$ and restriction to the other copy yields $MU_{C_2}$. More generally, for any compact Lie group morphism $\eta : G \rightarrow C_2$, there is a genuine $G$-spectrum $M \mathbb{R}_\eta$, such that restriction to $Ker(\eta) \subset G$ yields $MU_{Ker(\eta)}$, and, if $\eta$ admits a section, restriction to the image of that section yields $M \mathbb{R}$. We refer to the data of such an $\eta$ as an \emph{augmented compact Lie group}. When $\eta$ is the trivial homomorphism, we have $M \mathbb{R}_\eta \cong MU_G$.

Hu and Kriz show in \cite{HK01} that restriction induces an isomorphism \[ \pi^{C_2}_{\rho *}(M \mathbb{R}) \cong \pi_{2*}(MU) \] of graded rings, where $\rho$ denotes the representation of $C_2$ on $\mathbb{C}$ by complex conjugation. The main result of our paper is progress towards a generalization of this. Before we can state it, we must give some definitions.

An \emph{augmented torus} $\widehat{\mathbb{T}}^n$ is the augmentation $\mathbb{T}^n \rtimes C_2 \rightarrow C_2$ given by projection, where $C_2$ acts on $\mathbb{T}^n$ via inversion. By abuse, we may use $\widehat{\mathbb{T}}^n$ to denote both the augmentation map and the group $\mathbb{T}^n \rtimes C_2$. This construction actually makes sense when the torus is replaced with any compact abelian Lie group, and the class of all augmented groups obtained this way, along with the trivially augmented compact abelian Lie groups, are the so-called \emph{quasi-abelian} augmented Lie groups.

Given an augmented group $\eta : G \rightarrow C_2$, we obtain a map $\eta^* : RO(C_2) \rightarrow RO(G)$. We often omit the $\eta^*$ from the notation, so that if we write down the $C_2$-representation $\sigma$ or $\rho$ (respectively the sign representation over $\R$ and the regular representation over $\R$), we may implicitly view these as $G$-representations over $\R$.

\begin{theorem}
\label{thm:main-thm-first-form}
	Let $G$ be a compact abelian Lie group and $\hat{G}=G\rtimes C_2$ with $C_2$ acting by inversion. Then restriction to $G \subset \hat{G}$ defines a split surjection \[ \pi^{\hat{G}}_{\rho *} ( M \mathbb{R}_{\hat{G}}) \twoheadrightarrow \pi^{G}_{2*}(MU_G) \textrm{.} \]
\end{theorem}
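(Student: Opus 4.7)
The strategy is to apply Hausmann's universal property of $\pi^G_{2*}(MU_G)$ as the $G$-equivariant Lazard ring by exhibiting a $G$-equivariant formal group law on $\pi^{\hat{G}}_{\rho *}(M \mathbb{R}_{\hat{G}})$ whose pushforward along restriction is the canonical one. First, I would check that restriction is well-defined on the indicated graded pieces: since $\eta : \hat{G} \to C_2$ annihilates $G$, the $C_2$-representation $\rho$ pulls back to the trivial $2$-dimensional $G$-representation, so $\rho *$ restricts to $2*$, and the identification of the restriction of $M \mathbb{R}_{\hat{G}}$ to $G$ with $MU_G$ (one of the defining properties of $M \mathbb{R}_\eta$ recorded in the introduction) gives the map in the statement.

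The main construction is of Real Euler classes. Because $C_2$ acts on $G$ by inversion, any character $\chi : G \to U(1)$ satisfies $\chi(g^{-1}) = \overline{\chi(g)}$, so $\chi$ extends canonically to a Real representation $\hat{\chi}$ of $\hat{G}$ in which $C_2$ acts by complex conjugation on $\mathbb{C}$. The universal Real $\hat{G}$-orientation of $M \mathbb{R}_{\hat{G}}$ supplies a Real Euler class $e(\hat{\chi}) \in \pi^{\hat{G}}_\rho (M \mathbb{R}_{\hat{G}})$, and tensor product of characters, together with the paper's Real equivariant group law formalism, should organize these Euler classes into a Real $\hat{G}$-equivariant formal group law. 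Forgetting the Real structure yields a $G$-equivariant formal group law on $\pi^{\hat{G}}_{\rho *}(M \mathbb{R}_{\hat{G}})$, and since restriction to $G$ sends the Real $\hat{G}$-orientation to the complex $G$-orientation, pushing this $G$-FGL forward along restriction recovers the canonical $G$-FGL on $\pi^G_{2*}(MU_G)$.

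By Hausmann's theorem, the $G$-FGL just constructed is classified by a ring map $s : \pi^G_{2*}(MU_G) \to \pi^{\hat{G}}_{\rho *}(M \mathbb{R}_{\hat{G}})$. The composite $\mathrm{res} \circ s$ is then a ring endomorphism of the Lazard ring classifying the tautological $G$-FGL, and must therefore equal the identity by uniqueness. This exhibits $s$ as a section of $\mathrm{res}$, proving both surjectivity and the splitting.

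The main obstacle is verifying that the Real Euler classes $e(\hat{\chi})$ really do assemble into a bona fide $G$-equivariant formal group law in Hausmann's sense, as opposed to a weaker structure. Beyond the additivity encoded by $e(\widehat{\chi_1 \otimes \chi_2})$, Hausmann's axioms prescribe behavior on trivial characters, coordinate-change compatibility, and constraints involving Euler classes at fixed-point subgroups, all of which must be re-established in the $RO(\hat{G})$-graded Real setting. This is precisely where the Real equivariant group law theory developed earlier in the paper must do its work, and it relies essentially on $\hat{G} = G \rtimes C_2$ being quasi-abelian so that characters of $G$ canonically admit Real extensions to $\hat{G}$.
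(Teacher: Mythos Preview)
Your proposal is correct and follows essentially the same route as the paper: construct a $G$-equivariant formal group law on $\pi^{\hat{G}}_{\rho *}(M\mathbb{R}_{\hat{G}})$ from the Real $\hat{G}$-orientation, observe that it restricts to the canonical one on $\pi^G_{2*}(MU_G)$, and then invoke Hausmann's universality to obtain the section. The paper packages the technical verification you flag as the main obstacle into its Proposition~\ref{Real FGL for Real-oriented spectrum}, which shows that any Real $\hat{G}$-oriented spectrum carries a $G$-equivariant formal group law restricting correctly; your identification of this as the crux is on point.
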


The idea is to show that $\pi^{\hat{G}}_{*\rho}M\R_{\hat{G}}$ carries a $G$-equivariant formal group law that restricts to the universal one on $\pi^G_{2*}MU_G$. This $G$-equivariant formal group law will arise from a so-called Real $G$-equivariant formal group law which in turn is built using a notion of Real $G$-equivariant orientation. We will show that $M \mathbb{R}_\eta$ is appropriately Real $G$-equivariantly oriented by appealing to global ideas, particularly those developed in the celebrated paper of Hausmann \cite{Hau22}. 

Global homotopy was developed in \cite{Boh12} and \cite{Sch18}, and heuristically allows one to work $G$-equivariantly for all $G$ one cares about simultaneously. We begin by recalling the Real global homotopy first developed by Schwede in an unpublished early draft \cite{Sch14} or \cite{Sch18}, with many details filled in by \cite[Appendix A]{Sch22} (where it is referred to as $C_2$-global homotopy theory). Heuristically, Real global homotopy theory allows one to work with all augmented groups simultaneously. Next, we define the notion of a \emph{Real global orientation}, the Real analogue of Hausmann's notion of complex oriented global spectrum \cite{Hau22}, and observe that any Real globally oriented spectrum gives rise to both a collection of Real $G$-equivariant oriented spectra, and an algebraic gadget which we call a \emph{Real global group law}. This is similar to the notion of a global group law, except that it is built out of $RO(C_2)$-graded rings; for a precise definition, see Definition \ref{def:Real-GGL}.

\begin{theorem}
    Let $\mathbf{E}$ be a Real globally oriented Real global spectrum. Then the $RO(C_2)$-graded homotopy groups \[ \pi^{\widehat{\mathbb{T}}^n}_{\bigstar} \mathbf{E} \] form a Real global group law.
\end{theorem}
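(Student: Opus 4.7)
The plan is to follow Hausmann's proof in \cite{Hau22} that complex oriented global spectra give rise to global group laws, and systematically upgrade the complex side to the Real side: ordinary complex projective space is replaced by its Real global analogue (the global classifying space of Real line bundles, i.e.\ augmented $U(1)$-bundles, which I will denote $B_{gl}\widehat{U}(1)$), tori by augmented tori, the $\mathbb{Z}$-grading is promoted to an $RO(C_2)$-grading pulled back along the augmentation $\eta$, and the Euler class now lives in degree $\rho$ rather than degree $2$. The output is an assignment $n \mapsto \pi^{\widehat{\mathbb{T}}^n}_{\bigstar}\mathbf{E}$ together with coordinates, formal group law, and compatibilities that collectively realize the structure demanded by Definition \ref{def:Real-GGL}.

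The first step is to establish a Real projective bundle formula. A Real global orientation of $\mathbf{E}$ should provide, essentially by definition, a Thom class and hence a coordinate $x \in \mathbf{E}^{\rho}(B_{gl}\widehat{U}(1))$ restricting to a generator at the augmented basepoint. Passing to $\widehat{\mathbb{T}}^n$-equivariant cohomology and iterating, the orientation should produce an isomorphism
\[
\mathbf{E}^{\bigstar}_{\widehat{\mathbb{T}}^n}\bigl(B_{gl}\widehat{U}(1)^n\bigr) \;\cong\; \pi^{\widehat{\mathbb{T}}^n}_{\bigstar}(\mathbf{E})[[x_1,\ldots,x_n]]
\]
of $RO(C_2)$-graded complete rings, each $x_i$ of degree $\rho$, via the usual collapse/Gysin arguments applied to the Real projective space skeleta of $B_{gl}\widehat{U}(1)$.

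From this computation the group-law data is extracted in the standard fashion. The tensor-product multiplication $B_{gl}\widehat{U}(1) \times B_{gl}\widehat{U}(1) \to B_{gl}\widehat{U}(1)$ dualizes to a formal group law $F(x,y)$ over $\pi^{\widehat{\mathbb{T}}}_{\bigstar}(\mathbf{E})$; for each augmented character $\chi\colon \widehat{\mathbb{T}}^n \to \widehat{\mathbb{T}}$ one pulls $x$ back to obtain an Euler class $e(\chi) \in \pi^{\widehat{\mathbb{T}}^n}_{\rho}(\mathbf{E})$; and restrictions along augmented inclusions $\widehat{\mathbb{T}}^m \hookrightarrow \widehat{\mathbb{T}}^n$, together with the induced Weyl group action on $\widehat{\mathbb{T}}^n$, are built in by functoriality of the orientation, so assembling everything across all $n$ gives the required packaging.

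The main expected obstacle is checking the specific compatibility axioms of a Real global group law. In the classical case of \cite{Hau22} these reduce to standard compatibilities of restriction, inflation, and Weyl action with $F$ and with the Euler classes; in the Real setting one additionally has to track how the $RO(C_2)$-grading transforms along augmented group maps, and in particular handle augmented subgroups $\widehat{H} \subset \widehat{\mathbb{T}}^n$ that are not contained in $\mathbb{T}^n$. I would discharge these checks by reducing the $\widehat{\mathbb{T}}^n$-equivariant statements to assertions about Real global classifying spaces, where they become formal consequences of the fact that the Euler classes all descend from a single Real globally defined class $x$; the relevant diagrams then commute at the global level before passing to $\widehat{\mathbb{T}}^n$-equivariant cohomology, and the only genuinely new input beyond Hausmann's complex-oriented argument is that the projective bundle isomorphism above is natural in augmented group homomorphisms.
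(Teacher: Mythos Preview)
Your proposal misidentifies what Definition \ref{def:Real-GGL} actually demands. A Real global group law is \emph{not} a package of coordinates, a formal group law, and Weyl-group compatibilities; it is only a contravariant functor $X$ from augmented tori to $RO(C_2)$-graded rings together with a single class $e \in X(\widehat{\mathbb{T}})_{-\rho}$ such that for every split surjective Real character $\hat V\colon \widehat{\mathbb{T}^n}\to\widehat{\mathbb{T}}$ the sequence
\[
0 \longrightarrow X(\widehat{\mathbb{T}^n}) \xrightarrow{\ \hat V^* e\cdot -\ } X(\widehat{\mathbb{T}^n}) \xrightarrow{\ res\ } X(\widehat{\ker(V)}) \longrightarrow 0
\]
is exact. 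The functoriality of $n\mapsto \pi^{\widehat{\mathbb{T}^n}}_{\bigstar}\mathbf{E}$ is automatic, so the \emph{only} thing to prove is this short exact sequence; your outline never addresses it. The projective bundle formula, the tensor-product comultiplication, and the inflation/Weyl compatibilities you describe belong to the theory of Real \emph{equivariant} formal group laws in Section~4.2, which is a different structure and is not what the theorem asserts. (The same is true in \cite{Hau22}: Hausmann's Definition of a global group law is likewise just functor $+$ Euler class $+$ short exact sequence, and his proof for complex oriented global spectra is not a projective-bundle argument.)

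The paper's actual proof (Proposition \ref{Real global FGL for Real globally oriented spectrum}) is short and direct. One pulls back the cofiber sequence $(\widehat{\mathbb{T}}/C_2)_+ \to S^0 \to S^{\hat\tau}$ along $\hat V$ to obtain a cofiber sequence of $\widehat{\mathbb{T}^n}$-spaces
\[
(\widehat{\mathbb{T}^n}/\widehat{\ker(V)})_+ \longrightarrow S^0 \longrightarrow S^{\hat V}.
\]
In the resulting long exact sequence in $\mathbf{E}$-cohomology, the Real orientation identifies $E^{\bigstar}_{\widehat{\mathbb{T}^n}}(S^{\hat V})\cong E^{\bigstar}_{\widehat{\mathbb{T}^n}}(S^{\rho})$ so that one map becomes multiplication by the Euler class $e_{\hat V}$, and the other map is restriction to $\widehat{\ker(V)}$. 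Because $\hat V$ is split surjective one produces an augmented retraction $\hat r\colon \widehat{\mathbb{T}^n}\to\widehat{\ker(V)}$, so the restriction is surjective and the long exact sequence breaks into the required short exact sequences. No projective bundle computation is needed; the entire argument lives at the level of representation spheres.
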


We will construct a Real globally oriented spectrum $\mathbf{MR}$, based on a construction in \cite{Sch14}, and consequently obtain a Real global group law. There is a functor from the category of Real global group laws to global group laws, and the fact that $\pi^{\mathbb{T}^n}_{2*} MU_{\mathbb{T}^n}$ form the initial global group law gives another way to prove Theorem \ref{thm:main-thm-first-form} for augmented tori.

To show that the split surjection of Theorem \ref{thm:main-thm-first-form} is actually an isomorphism, we propose an evenness conjecture, which the second and third authors plan to prove in future work. This conjecture yields an immediate proof that the map of Theorem \ref{thm:main-thm-first-form} is injective, hence an isomorphism.

\begin{conjecture}[Evenness Conjecture, Conjecture \ref{conj:MR-eta-is-even}]\label{conj: evennes conjecture in introduction}
    For all quasi-abelian augmented Lie groups $\eta : G \rightarrow C_2$, we have \[ \pi^G_{* \rho - 1}( M \mathbb{R}_\eta ) = 0 \textrm{.} \]
\end{conjecture}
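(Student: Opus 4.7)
The plan is to prove the conjecture by reducing to two already-known evenness results — Hausmann's evenness for $MU_G^G$ and Hu-Kriz's evenness for $M\mathbb{R}^{C_2}$ — and bridging them via isotropy separation along the canonical normal subgroup. First, if $\eta$ is trivial, then $M\mathbb{R}_\eta = MU_G$ and the claim reads $\pi_{2*-1}^G MU_G = 0$, which is Hausmann's theorem. It therefore remains to treat the semidirect product case $\eta : \hat{G} = G \rtimes C_2 \to C_2$ with $G$ compact abelian and $C_2$ acting by inversion.

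For this case, I would use the isotropy separation cofiber sequence
\[ E\mathcal{F}_+ \wedge M\mathbb{R}_\eta \longrightarrow M\mathbb{R}_\eta \longrightarrow \widetilde{E\mathcal{F}} \wedge M\mathbb{R}_\eta \]
associated to the family $\mathcal{F}$ of subgroups of $\hat{G}$ contained in $G$, and show that each outer term has vanishing $\pi^{\hat{G}}_{*\rho - 1}$. Since $\mathcal{F}$ is pulled back from the trivial family on $\hat{G}/G = C_2$, we have $E\mathcal{F} \simeq EC_2$ with $\hat{G}$ acting through the projection, so a $C_2$-Borel spectral sequence computes $\pi^{\hat{G}}_{*\rho - 1}(E\mathcal{F}_+ \wedge M\mathbb{R}_\eta)$ from $\pi^G_{*\rho - 1}(M\mathbb{R}_\eta|_G)$. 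The restriction identity $M\mathbb{R}_\eta|_G = MU_G$ combined with the fact that $\eta^*\rho|_G$ is the trivial 2-dimensional representation gives $\pi^G_{*\rho - 1}(M\mathbb{R}_\eta|_G) = \pi^G_{2*-1}MU_G$, which vanishes by Hausmann; no differentials or extensions can create odd classes.

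For the cofiber term, the standard identification yields $\pi^{\hat{G}}_{*\rho - 1}(\widetilde{E\mathcal{F}} \wedge M\mathbb{R}_\eta) = \pi^{C_2}_{*\rho - 1}(\Phi^G M\mathbb{R}_\eta)$, where $(k\rho)^G = k\rho$ as a $C_2$-representation since $G$ acts trivially through $\eta$. The expectation — guided by the fact that $\mathbf{MR}$ is built from Real Grassmannians — is that $\Phi^G M\mathbb{R}_\eta$ is Real-oriented as a $C_2$-spectrum, plausibly assembled as an $M\mathbb{R}$-module from the components of the $G$-fixed loci in the Real Grassmannians defining $\mathbf{MR}$. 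Granting such a description, its $*\rho - 1$ homotopy vanishes by Hu-Kriz evenness, completing the long exact sequence argument.

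The main obstacle is the identification of $\Phi^G M\mathbb{R}_\eta$ and the verification of its Real-orientedness in a form strong enough to invoke Hu-Kriz. A cleaner alternative, perhaps closer to what the authors have in mind, is to globalize the Real Schubert cell decomposition underpinning Hu-Kriz's original proof: produce a Real global cell filtration on $\mathbf{MR}$ whose cells are of the form $S^{k\rho}$, interpreted as globally-indexed Real representation spheres. Evaluating at any quasi-abelian $\eta$ would then yield a cell structure on $M\mathbb{R}_\eta$ concentrated in $\rho$-even degrees, and the conjecture would follow by a uniform cellular vanishing argument. The technical heart of this alternative is verifying that the Bruhat pairing of Schubert cells respects the global structure, not just the $C_2$-equivariant one on each fiber.
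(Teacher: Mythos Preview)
First, note that this statement is an open \emph{conjecture} in the paper, not a theorem; there is no proof to compare against. The authors explicitly leave it for future work and suggest two possible routes --- an $RO(C_2)$-graded Atiyah--Segal completion theorem for $M\R_\eta$, or an argument ruling out infinitely Euler-divisible elements in $\pi^{\hat G}_{\rho*}M\R_\eta$ --- neither of which is the isotropy-separation argument you sketch.

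Your argument contains a genuine gap: it conflates two different families of subgroups of $\hat{G}$. The family $\mathcal{F} = \{H : H \subseteq G\}$ you introduce is indeed pulled back along $\eta$ from the trivial family on $C_2$, so $E\mathcal{F} \simeq \eta^* EC_2$ and $\widetilde{E\mathcal{F}} \simeq S^{\infty\sigma}$; but smashing with $S^{\infty\sigma}$ computes the $a_\sigma$-localization of $M\R_\eta$, \emph{not} $\Phi^G M\R_\eta$. The identification of the singular term with $\Phi^G$ requires instead the family $\mathcal{F}[G] = \{H : G \not\subseteq H\}$ (cf.\ Lemma~\ref{lem G-geo fixed point of hatG spectrum}), and for nontrivial $G$ that family is \emph{not} pulled back along $\eta$, so your Borel description of the free part cannot apply to it. When $G = e$ the two families coincide, but then $\Phi^G M\R_\eta = M\R$ and the argument is circular. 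Moreover, even the Borel half fails on its own terms: the skeletal filtration of $EC_2$ contributes $\pi^G_{2k-1-n}(MU_G)$ to the target degree $k\rho-1$ for every $n \geq 0$, and for odd $n$ these are even-degree groups of $MU_G$, hence nonzero --- so the vanishing of $\pi^G_{2*-1}(MU_G)$ alone does not force the abutment to vanish. One already sees both failures at $\hat{G} = C_2$: there $\pi^{C_2}_{k\rho-1}(\widetilde{EC_2} \wedge M\R) \cong \pi_{k-1}(\Phi^{C_2} M\R)$ is nonzero for most $k$, so neither outer term of your isotropy sequence vanishes even though the middle one does by Hu--Kriz. Your alternative suggestion of a Real-global Schubert cell filtration is more promising in spirit, but as you note, all of the content lies in actually constructing it.
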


On the other hand, there is a consequence of the Evenness Conjecture (cf Proposition \ref{prop: Evenness Conj implies Regularity Conj}) which allows one to deduce that the map of Theorem \ref{thm:main-thm-first-form} is injective merely for the augmented tori.

\begin{conjecture}[Regularity conjecture, Conjecture \ref{conj:R-is-regular}]\label{conj:regularity conjecture in introduction}
    The global group law $\mathbf{R}$ is regular in the sense of \cite[Definition 5.9]{Hau22}. In particular, the maps $\mathbf{R}(\mathbb{T}^n) \rightarrow \Phi^{\mathbb{T}^n} \mathbf{R}$ are injective.
\end{conjecture}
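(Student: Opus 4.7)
The plan is to deduce the Regularity Conjecture from the Evenness Conjecture \ref{conj: evennes conjecture in introduction}, via Proposition \ref{prop: Evenness Conj implies Regularity Conj}, and then to outline how one might attack the evenness statement itself. The reduction to evenness is the clean part of the argument; the hard part is evenness.

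First, I would reinterpret the map $\mathbf{R}(\mathbb{T}^n) \to \Phi^{\mathbb{T}^n}\mathbf{R}$ as the map on equivariant homotopy groups arising from the isotropy-separation cofiber sequence
\[ E\mathcal{P}_+ \wedge \MR_{\widehat{\mathbb{T}}^n} \to \MR_{\widehat{\mathbb{T}}^n} \to \widetilde{E\mathcal{P}} \wedge \MR_{\widehat{\mathbb{T}}^n}, \]
where $\mathcal{P}$ is the family of proper augmented subgroups of $\widehat{\mathbb{T}}^n$ (so that $\pi^{\widehat{\mathbb{T}}^n}_{\rho *}$ of the cofiber computes $\Phi^{\widehat{\mathbb{T}}^n}\mathbf{R}$). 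Taking the long exact sequence, any element of the kernel lifts to $\pi^{\widehat{\mathbb{T}}^n}_{\rho * - 1}(E\mathcal{P}_+ \wedge \MR_{\widehat{\mathbb{T}}^n})$. Filtering $E\mathcal{P}_+$ by cells of the form $\widehat{\mathbb{T}}^n/H_+ \wedge S^k$ for proper augmented $H < \widehat{\mathbb{T}}^n$, and applying the Wirthmüller isomorphism together with the fact that the global structure of $\mathbf{MR}$ restricts $\MR_{\widehat{\mathbb{T}}^n}$ to $\MR_{\eta|_H}$ at each proper $H$, the vanishing of this lift group reduces to vanishing in appropriate $(\rho \bullet - 1)$-style degrees of $\pi^H_\star \MR_{\eta|_H}$ for each proper $H$. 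Each such $H$ is again quasi-abelian, so this vanishing is precisely the content of the Evenness Conjecture for $H$.

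The argument then proceeds by induction on the rank of the torus. The base case $n = 0$ is handled by the Hu--Kriz isomorphism $\mathbf{R}(\widehat{\mathbb{T}}^0) = \pi^{C_2}_{\rho *}\MR \cong MU_{2*} \cong \Phi^{C_2}\mathbf{R}$, and the inductive step only invokes the Evenness Conjecture for strictly smaller augmented subgroups of $\widehat{\mathbb{T}}^n$, so no circularity arises.

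The main obstacle is of course the Evenness Conjecture itself, which the second and third authors defer to future work. The most natural route is via an HHR-style slice filtration of $\MR_\eta$ generalizing the $C_2$-slice analysis of $\MR$: exhibit a family of Real regular slice generators concentrated in $\rho *$-gradings, compute the slice spectral sequence input from the geometric fixed points $\Phi^H \MR_\eta$ for $H \leq G$, and conclude evenness from the collapse pattern. A narrower alternative, aimed at regularity alone, would be to construct Euler-class regular sequences $a_{V_1}, a_{V_2}, \dots$ directly in $\mathbf{R}(\widehat{\mathbb{T}}^n)$ from augmented characters of $\widehat{\mathbb{T}}^n$, using the Real global orientation of $\mathbf{MR}$ to produce power-series presentations mirroring Hausmann's argument for $MU^{\mathbb{T}^n}_{2*}$ in \cite{Hau22}; this could plausibly give regularity without invoking the full strength of evenness.
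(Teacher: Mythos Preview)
The statement is a conjecture; the paper does not prove it outright but reduces it to the Evenness Conjecture via Proposition~\ref{prop: Evenness Conj implies Regularity Conj}. You cite that proposition and then sketch your own reduction, but the paper's actual argument is both different and much shorter: assuming the Evenness Conjecture, Theorem~\ref{thm: Hu-Kriz iso} shows that the restriction map $\mathbf{R}\to\mathbf{L}$ is an isomorphism of global group laws, and since $\mathbf{L}$ is already regular by \cite{Hau22}, $\mathbf{R}$ is regular as well. No isotropy separation, cell filtration, or induction on rank enters.

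Your isotropy-separation approach has two genuine gaps. First, the family is wrong: for the target $\Phi^{\mathbb{T}^n}\mathbf{R}$ you must use $\mathcal{F}[\mathbb{T}^n]$, the family of subgroups of $\widehat{\mathbb{T}}^n$ not containing $\mathbb{T}^n$, as in Lemma~\ref{lem G-geo fixed point of hatG spectrum} and Proposition~\ref{prop:algebraic-gfp-are-same-as-topological-gfp}; the family $\mathcal{P}$ of all proper subgroups yields $\Phi^{\widehat{\mathbb{T}}^n}$ instead, and you in fact write $\Phi^{\widehat{\mathbb{T}}^n}\mathbf{R}$ where $\Phi^{\mathbb{T}^n}\mathbf{R}$ is meant. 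Second, and more seriously, filtering the universal space by integer cells $\widehat{\mathbb{T}}^n/H_+\wedge S^k$ shifts degrees by integers, so after Wirthm\"uller you need vanishing of $\pi^H_{\rho *-k}$ (your indexing of the lift is also off by one, but that is minor). For generic $k$ these are not of the form $m\rho-1$, which is all the Evenness Conjecture controls; for example, when $H\subset\mathbb{T}^n$ is trivially augmented, $\rho$ restricts to $2$ and you would need $\pi^H_{2*-k}(MU_H)=0$, which fails for even $k$. A Real-cell structure on the universal space with cells in $\rho$-multiples would be needed to salvage this, and that is nontrivial extra input you do not provide. Note also that the relevant cell-isotropy groups $H$ range over all closed subgroups in the family, not just smaller augmented tori, so your induction on torus rank does not by itself organize the argument.
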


Even thought it follows from the Evenness Conjecture, we still call the above statement the Regularity Conjecture, since it is strictly weaker than the Evenness Conjecture and it is still strong enough to prove the restriction isomorphism for augmented tori (in fact, the Regularity Conjecture is equivalent to a version of the Evenness Conjecture which only holds for augmented tori). Assuming the Regularity Conjecture, we may mimic the arguments of \cite{Hau22} to reduce the proof of injectivity to a statement about geometric fixed points. We give a self-contained proof, which does not rely on any conjectural assumptions, of the following result.

\begin{theorem}
\label{thm:res-induces-iso-on-geom-fixed-points}
    Let $\hat{G} = G \rtimes C_2$ for $G$ any compact abelian Lie group. Then the restriction map $\Phi^G(res^{\widehat{G}}_{G})$ given by 
    \begin{align*} 
    \pi_{*\rho}^{C_2}\Phi^G & (M\R_{\hat{G}}) \cong [S^{*\rho},M\R_{\hat{G}}\wedge \widetilde{E\mathcal{F}[G]}]^{\hat{G}} \\
    & \to [S^{2*},MU_G\wedge \widetilde{E\mathcal{P}}]^G\cong \pi_{2*}\Phi^G MU_G
    \end{align*}
    is an isomorphism.
\end{theorem}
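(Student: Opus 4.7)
My strategy is to model $\Phi^G M\R_{\hat{G}}$ as a filtered colimit of Thom spaces of Real $C_2$-representations (complex vector spaces with antilinear involution), and to exploit the fact that every such representation is isomorphic over $\mathbb{R}$ to a multiple of $\rho$. This reduces the isomorphism to a termwise Thom-isomorphism statement that is transparent on both sides.

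Concretely, I would first model $M\R_{\hat{G}}$ as the Real orthogonal Thom spectrum of the universal complex $\hat{G}$-vector bundle, giving $M\R_{\hat{G}} \simeq \colim_V \Sigma^{-V} \mathrm{Th}(\gamma_V^{\hat{G}})$ indexed over Real $\hat{G}$-representations $V$. Applying the geometric fixed points formula $\Phi^G \Sigma^V \simeq \Sigma^{V^G}$ termwise yields a description of $\Phi^G M\R_{\hat{G}}$ as a $C_2$-spectrum assembled from Thom spaces of $G$-fixed subspaces $V^G$, equipped with their residual antilinear $C_2 = \hat{G}/G$-action; these are Real $C_2$-representations. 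The analogous description for $MU_G$ expresses $\Phi^G MU_G$ as a non-equivariant colimit of Thom spaces of the same underlying complex vector spaces, and the restriction map is termwise the forget-the-$C_2$-action map.

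The decisive observation is that any Real $C_2$-representation $W$ of complex dimension $n$ satisfies $W \cong W^\tau \otimes_{\mathbb{R}} \mathbb{C}$ with $W^\tau$ a trivial real $C_2$-representation, so $W \cong n\rho$ in $RO(C_2)$. The $C_2$-equivariant Thom isomorphism then gives $\pi^{C_2}_{k\rho}(\mathrm{Th}(W)) \cong \pi^{C_2}_{(k-n)\rho}(S^0) \cong \pi_{2(k-n)}(S^0)$, while the non-equivariant Thom isomorphism gives $\pi_{2k}(\mathrm{Th}(W|_e)) \cong \pi_{2(k-n)}(S^0)$. The restriction naturally realizes this identification, and it persists through the filtered colimit.

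The main obstacle will be reconciling the two indexing categories — Real $\hat{G}$-representations for $M\R_{\hat{G}}$ versus complex $G$-representations for $MU_G$ — since the restriction functor between them is not an equivalence. I expect a cofinality argument to close the gap: every complex $G$-representation $W$ embeds into the restriction of the Real $\hat{G}$-representation $W \oplus \bar{W}$ carrying the swap as its antilinear involution, so the two colimits are cofinally comparable. Setting this up cleanly inside Schwede's Real global orthogonal model of $\mathbf{MR}$ is where most of the technical care is needed, but once the colimit description is in place, the termwise identification sketched above closes out the proof.
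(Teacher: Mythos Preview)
Your termwise analysis of $\Phi^G\bigl(\Sigma^{-V}\mathrm{Th}(\gamma_V^{\hat G})\bigr)$ is too coarse, and this is where the argument breaks. It is true that $\Phi^G\Sigma^{-V}\simeq\Sigma^{-V^G}$ and that $V^G$, as a Real $C_2$-representation, is a multiple of $\rho$. But the $G$-fixed points of the Thom space $\mathrm{Th}(\gamma_V^{\hat G})$ over the Grassmannian are \emph{not} a representation sphere. As in Sinha's computation (which the paper follows), the fixed locus of the Grassmannian splits as a disjoint union indexed by isotypical decompositions, each component being a product of smaller Grassmannians, with the tautological bundle restricting accordingly. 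Passing to the colimit, one obtains
\[
\Phi^G M\R_{\hat G}\;\simeq\;\Bigl(\bigvee_{W\in I_{R\R(\hat G)}} S^{\rho\cdot\dim_\C W}\Bigr)\wedge M\R\wedge\prod_{V\in Irr^*(\hat G)} B\R,
\]
not a wedge of $\rho$-spheres. The restriction comparison therefore does not reduce to a statement about spheres; it reduces to the Hu--Kriz isomorphism $\pi^{C_2}_{*\rho}M\R\cong\pi_{2*}MU$ together with the further lemma $M\R^{C_2}_{*\rho}B\R\cong MU_{2*}BU$ (and its extension to countable products). These are the steps that carry the content, and your outline omits them entirely.

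There is also a secondary error that signals the same problem: your claimed identification $\pi^{C_2}_{(k-n)\rho}(S^0)\cong\pi_{2(k-n)}(S^0)$ is false for the sphere spectrum (already $\pi^{C_2}_0\mathbb{S}=A(C_2)\not\cong\Z$). The statement that \emph{is} true is the analogue with $M\R$ and $MU$ in place of $\mathbb{S}$, but that is precisely the Hu--Kriz theorem --- the base case you should be invoking as input, not deriving as a triviality. Your cofinality remark about Real $\hat G$-representations versus complex $G$-representations is fine (and the paper uses the equivalent Lemma~\ref{lem:RR-RU-identification}), but it only handles the indexing of the wedge and the product; it does not touch the $M\R$ and $B\R$ factors where the actual work lies.
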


With the assumption of either conjecture, we offer the following slicker statement of our main theorem. We use $\mathbf{L}$ to denote the global group law determined by $\mathbf{MU}$, and $\mathbf{R}$ to denote the global group law obtained from applying a certain functor to the Real global group law determined by $\mathbf{MR}$.

\begin{theorem}
\label{thm:main-theorem-second-form}
	Assume either Conjecture \ref{conj:MR-eta-is-even} or Conjecture \ref{conj:R-is-regular}. Then the canonical map $\mathbf{L} \rightarrow \mathbf{R}$ is an isomorphism with inverse given levelwise by restriction $\mathbf{R} \rightarrow \mathbf{L}$.
\end{theorem}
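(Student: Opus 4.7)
My plan is to split the theorem into proving the two identities $r \circ f = \id_{\mathbf{L}}$ and $f \circ r = \id_{\mathbf{R}}$, where $f : \mathbf{L} \to \mathbf{R}$ is the canonical map produced by initiality of $\mathbf{L}$ among global group laws and $r : \mathbf{R} \to \mathbf{L}$ is the levelwise restriction along $\mathbb{T}^n \subset \widehat{\mathbb{T}}^n$. Before anything else, I would check that $r$ really is a morphism of global group laws, i.e., that the individual restriction maps $\pi^{\widehat{\mathbb{T}}^n}_{\rho *} M\mathbb{R}_{\widehat{\mathbb{T}}^n} \to \pi^{\mathbb{T}^n}_{2*} MU_{\mathbb{T}^n}$ intertwine multiplication, restrictions along surjections of tori, multiplicative transfers, and the coordinate data. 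This should reduce to a naturality check on restriction together with the definition of the functor producing $\mathbf{R}$ from the Real global group law of $\mathbf{MR}$.

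Once $r$ is known to be a morphism, $r \circ f$ is an endomorphism of the initial object $\mathbf{L}$ and hence equals $\id_{\mathbf{L}}$. The other direction, $f \circ r = \id_{\mathbf{R}}$, will follow once $r$ is shown to be injective at each level: indeed $r \circ (f \circ r) = (r \circ f) \circ r = r$, and injectivity of $r$ then forces $f \circ r = \id_{\mathbf{R}}$. Surjectivity of $r$ and the fact that $f$ serves as a section are already the content of Theorem \ref{thm:main-thm-first-form}, so the entire force of Theorem \ref{thm:main-theorem-second-form} is concentrated in this injectivity claim.

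For the injectivity, I would mimic the geometric fixed point arguments of \cite{Hau22}, using the commutative square
\[
\begin{tikzcd}
\mathbf{R}(\mathbb{T}^n) \arrow[r] \arrow[d, "r"'] & \Phi^{\mathbb{T}^n}\mathbf{R} \arrow[d, "\Phi^{\mathbb{T}^n}(r)"] \\
\mathbf{L}(\mathbb{T}^n) \arrow[r] & \Phi^{\mathbb{T}^n}\mathbf{L}.
\end{tikzcd}
\]
Under the Regularity Conjecture (which is implied by the Evenness Conjecture via Proposition \ref{prop: Evenness Conj implies Regularity Conj}), the top horizontal arrow is injective; by Theorem \ref{thm:res-induces-iso-on-geom-fixed-points}, the right-hand vertical arrow is an isomorphism. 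Commutativity of the square then forces $r$ to be injective, as required. The principal obstacle is precisely this injectivity step, which is exactly where the conjectural hypotheses are indispensable; everything else in the argument is a formal consequence of universality of $\mathbf{L}$ and the naturality of restriction.
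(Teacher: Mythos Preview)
Your proposal is correct and coincides with the paper's second proof of the theorem, the one routed through the Regularity Conjecture and the geometric fixed point square (your square is the transpose of the paper's, but the argument is identical). Two small remarks: global group laws in this paper carry only restriction maps and an Euler class, not multiplicative transfers, so that part of your compatibility check is unnecessary; and to identify the algebraic geometric fixed points $\Phi^{\mathbb{T}^n}\mathbf{R}$ with $\pi^{C_2}_{\rho *}\Phi^{\mathbb{T}^n} M\mathbb{R}_{\widehat{\mathbb{T}}^n}$ you should invoke Proposition~\ref{prop:algebraic-gfp-are-same-as-topological-gfp} alongside Theorem~\ref{thm:res-induces-iso-on-geom-fixed-points}.

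The paper also gives a more direct argument under the Evenness Conjecture that does not pass through regularity: from the cofiber sequence $\widehat{\mathbb{T}}^n/\mathbb{T}^n_+ \to S^0 \to S^\sigma$ one gets a long exact sequence in which the term $\pi^{\widehat{\mathbb{T}}^n}_{*\rho+\sigma} M\mathbb{R}$ vanishes (this is equivalent to evenness via $\rho$-periodicity of $M\mathbb{R}P$), forcing the restriction map to be injective immediately. Your route, reducing Evenness to Regularity via Proposition~\ref{prop: Evenness Conj implies Regularity Conj} and then running the geometric fixed point argument, is perfectly valid but slightly less direct in the Evenness case; on the other hand, it gives a single uniform proof covering both hypotheses.
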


In fact, if one assumes the stronger Evenness Conjecture, one may further strengthen this result. We thank Markus Hausmann for pointing out that our proof for the augmented-tori case should imply the case of all quasi-abelians under these assumptions.

\begin{theorem}
    Assume Conjecture \ref{conj:MR-eta-is-even}. Then for any quasi-abelian augmented Lie group $\eta : \hat{G} \rightarrow C_2$, the restriction map \[ \pi^{\hat{G}}_{\rho *}( M \mathbb{R}_{\hat{G}} ) \rightarrow \pi^{ker(\eta)}_{2 *} (MU_{ker(\eta)}) \] is an isomorphism.
\end{theorem}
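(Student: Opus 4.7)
The quasi-abelian augmentations come in two kinds. For a trivial augmentation $\eta : G \to C_2$ we have $M\R_\eta \cong MU_G$ and $\eta^*\rho = 2 \in RO(G)$, so the restriction map is literally the identity and the claim is vacuous. The entire content therefore lies in the case $\eta : \hat G = G \rtimes C_2 \to C_2$, where surjectivity with a splitting is already Theorem \ref{thm:main-thm-first-form}; what remains is injectivity.

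The plan is to generalise the augmented-torus argument used under Conjecture \ref{conj:R-is-regular}. First, I would assemble the commutative square
\[
\begin{array}{ccc}
\pi^{\hat G}_{\rho *}\, M\R_{\hat G} & \xrightarrow{\;\mathrm{res}\;} & \pi^G_{2*}\, MU_G \\
\downarrow & & \downarrow \\
\pi^{C_2}_{*\rho}\,\Phi^G M\R_{\hat G} & \xrightarrow{\;\sim\;} & \pi_{2*}\,\Phi^G MU_G
\end{array}
\]
whose bottom arrow is the isomorphism of Theorem \ref{thm:res-induces-iso-on-geom-fixed-points} and whose verticals are induced by taking $G$-geometric fixed points. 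Once the left vertical is shown to be injective, a one-step diagram chase forces injectivity of the top restriction and finishes the proof.

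Injectivity of the left vertical is a ``regularity at $\hat G$'' statement. It sits in the long exact sequence associated to the isotropy separation cofiber sequence
\[ E\mathcal{F}[G]_+ \wedge M\R_{\hat G} \to M\R_{\hat G} \to \widetilde{E\mathcal{F}[G]} \wedge M\R_{\hat G}, \]
so its kernel is the image of $\pi^{\hat G}_{\rho *}(E\mathcal{F}[G]_+ \wedge M\R_{\hat G})$. Filtering $E\mathcal{F}[G]$ by orbit type yields an equivariant Atiyah--Hirzebruch spectral sequence whose $E^1$-page is built from $\pi^H_{\bigstar}(M\R_{\hat G}|_H)$ for $H \in \mathcal{F}[G]$. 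The key combinatorial observation is that every such $H$, equipped with the restricted augmentation $\eta|_H$, is itself a quasi-abelian augmented Lie group: either $H \subseteq G$ with trivial augmentation, or $H$ contains an order-two reflection $(g,1)$ and consequently splits as $(H \cap G) \rtimes C_2$ with $C_2$ acting by inversion (since $G$ is abelian, conjugation by any reflection inverts $H \cap G$). Applying the Evenness Conjecture at each such $H$ kills the odd-$\rho$ columns of the $E^1$-page and delivers the desired vanishing. This step is precisely the extension of the ``Evenness implies Regularity'' direction of Proposition \ref{prop: Evenness Conj implies Regularity Conj} from augmented tori to all quasi-abelian augmentations, and it is the principal obstacle: one must arrange the $RO(C_2)$-graded bookkeeping on the $E^1$-page so that Evenness is exactly the right vanishing hypothesis, and one must accommodate the possibly non-trivial component group of $G$ so that the spectral sequence really collapses in the degrees of interest. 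Once this generalisation is in place, the diagram chase above is immediate.
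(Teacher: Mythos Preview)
Your approach is genuinely different from the paper's, and substantially more elaborate than necessary. The paper does \emph{not} go through geometric fixed points when assuming the Evenness Conjecture; that route is reserved for the weaker Regularity Conjecture. Under Evenness the paper argues directly with the single cofiber sequence
\[
\hat G / G_+ \longrightarrow S^0 \longrightarrow S^{\sigma},
\]
which yields the long exact sequence
\[
\cdots \to \pi^{\hat G}_{*\rho+\sigma}\,M\R_{\hat G} \xrightarrow{\;a_\sigma\;} \pi^{\hat G}_{*\rho}\,M\R_{\hat G} \xrightarrow{\;\mathrm{res}\;} \pi^{G}_{2*}\,MU_G \to \cdots.
\]
Since $M\R P_{\hat G}$ is $\rho$-periodic (Proposition~\ref{prop:MRPeta splits}), the Evenness Conjecture at $\hat G$ alone forces $\pi^{\hat G}_{*\rho+\sigma}\,M\R_{\hat G}\cong\pi^{\hat G}_{*\rho-1}\,M\R_{\hat G}=0$, and injectivity of the restriction is immediate. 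No spectral sequence, no induction over isotropy, and crucially no appeal to Evenness at proper subgroups.

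By contrast, your proposal reproduces the commutative square used in the Regularity route and then attempts to \emph{prove} regularity at $\hat G$ from Evenness via the isotropy-separation spectral sequence. You correctly identify this as the ``principal obstacle'' and do not resolve it: the $E^1$-page involves $\pi^H_{*\rho-n}(M\R_H)$ for all $n\ge 0$ coming from the cells of $E\mathcal{F}[G]$, whereas Evenness only controls the single degree $*\rho-1$, so the bookkeeping you flag is genuinely nontrivial. The paper sidesteps the obstacle entirely by observing that the restriction map \emph{already} sits in a three-term sequence whose third term is governed by Evenness at $\hat G$ itself.
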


Most of the material in this paper can be read model independently. The only place where we choose orthogonal $G$-spectra developed in \cite{Sch22} for our model of genuine $G$-spectra is to get a commutative ring structure on $M \mathbb{R}_{\eta}$ (on the point-set level) using that of $M \mathbb{R}$.

We begin in section 2 with a review of the representation theory of augmented Lie groups and Real global homotopy theory, following the exposition in unpublished work of Schwede \cite{Sch14} and the rigorous treatment in \cite{Sch22}. Next, in section 3 we describe analogues of geometric constructions from \cite{Hau22} for augmented Lie groups which we require, and in section 4 we apply these geometric constructions to analyze Real orientations. In section 5 we give Schwede's construction of the Real global spectrum $\mathbf{MR}$, which by the machinary of section 4 gives rise to a global group law $\mathbf{R}$. After this, we have the necessary ingredients to prove our main theorems in section 6.

\subsection{Notation}

\begin{itemize}
    \item $\eta : G \rightarrow C_2$ will denote an augmented compact Lie group.
    \item $\widehat{\mathbb{T}}^n$ will denote the augmentation $\mathbb{T}^n \rtimes C_2 \rightarrow C_2$ given by projection, where $C_2$ acts on $\mathbb{T}^n$ via inversion. By abuse, we may use $\widehat{\mathbb{T}}^n$ to denote both the augmentation map and the group $\mathbb{T}^n \rtimes C_2$.
    \item If $G$ is a compact abelian Lie group, we will use $\hat{G}$ to denote $G\rtimes C_2$ with $C_2$ acting by group inverson. This is the only semi-direct product we use in this paper.
    \item $G^*$ is the character group $Hom(G,S^1)$ of a compact Lie group $G$.
\end{itemize}

\subsection{Acknowledgements}

The authors thank Ryan Quinn for suggesting the definition of Real global group laws, along with the conjectures on the vanishing of the $* \rho-1$ homotopy groups, and of the regularity of the global group law $\mathbf{R}$. The authors would also like to thank Yutao Liu for many helpful and productive conversations, and Stefan Schwede for sharing an early draft of his book \cite{Sch14} which contained much of the necessary technical background. The second author thanks Mike Hill for an initial suggestion which lead to this project. Also, the authors thank Lucas Piessevaux, Tobias Lenz, Lennart Meier, Markus Hausmann, and Robert Burklund for valuable insights. Finally, the authors thank Markus Hausmann for providing very helpful feedback on an early draft.

The second author was supported by National Science Foundation grant number DMS-2152235 and the Max Plank Institute for Mathematics at Bonn.

\section{Real global homotopy theory}

One of the main features of equivariant homotopy theory is that $\Z$-graded objects often acquire $RO(G)$-gradings. In our setting, $G$ will be replaced by an \emph{augmented goup}, that is, a continuous homomorphism $G \rightarrow C_2$. We will therefore require an understanding of the representation theory of such objects.

\subsection{Augmented Lie groups and their representation theory}

Here we introduce augmented compact Lie groups. In the present work, augmented compact Lie groups and families of such play a role analogous to that of compact Lie groups and families of such in global homotopy theory, as utilized in \cite{Hau22}. We also record some basic representation-theoretic facts in this setting. The material in this appendix is loosely adapted from an early draft of Schwede's text \cite{Sch14}.

\begin{definition}
    An augmented Lie group is a continuous homomorphism $\eta : G \rightarrow C_2$ from a compact Lie group $G$ to the order two cyclic group $C_2$.
\end{definition}

The collection of such forms a category, the slice category of compact Lie groups over $C_2$. Morphisms are maps which respect the augmentation; they are given by commutative squares \[ \begin{tikzcd}
    G \arrow{d} \arrow{r} & H \arrow{d} \\
    C_2 \arrow{r}{Id} & C_2 
\end{tikzcd} \] This category has finite products: given $G \rightarrow C_2$ and $H \rightarrow C_2$, the categorical product is the fiber product $G \times_{C_2} H$, equipped with the canonical map to $C_2$.

Given an augmented group $G$, we can extract as a subgroup the kernel of the augmentation map. This construction specifies a functor from augmented groups to groups. On the other hand, there are multiple ways to make a compact Lie group into an augmented group. Given a compact Lie group $G$, we could either give $G$ the trivial augmentation, or form the product $G \times C_2$, with the projection map as the augmentation. The underlying compact Lie group of either of these constructions returns $G$, and both constructions are functorial. 

Alternatively, the most important example of producing an augmented group from an ordinary group requires us to assume commutativity. Specifically, given a compact abelian Lie group $A$, we can form the semidirect product $G := A \rtimes C_2$, with $C_2$ acting on $G$ by inversion: $\gamma x = -x$. Of central importance will be the augmented tori $\widehat{\mathbb{T}}^n$ obtained by feeding in an $n$-dimensional torus to the above construction. By abust we will use $\widehat{\mathbb{T}}^n$ to refer to both the augmentation map and the domain of the augmentation.

A particularly important class of augmented groups are constructed as follows. For $V$ a complex inner product space, define $\mathbf{L}^\C(V,V)$ as the collection of maps $V \rightarrow V$ which are either $\C$-linear isometries or conjugate linear maps which preserve the inner product up to conjugation. The augmentation sends $\C$-linear maps to the trivial element in $C_2$ and conjugate linear maps to the nontrivial element in $C_2$.

\begin{definition}
    A Real representation of an augmented Lie group $\eta : G \rightarrow C_2$ is an augmented map $G \rightarrow \mathbf{L}^\C(V,V)$. We may sometimes refer to a Real representation of $\eta$ as just a representation of $\eta$.
\end{definition}

Unraveling the definitions, we see that a Real representation of $\eta$ is an $\R$-linear representation of $G$ for which each element of $G$ acts either by a $\C$-linear isometry or a conjugate linear anti-isometry depending on the augmentation. A morphism of representations is an isometric embedding $V \rightarrow W$ which is $G$-equivariant.

Given two Real $\eta$-representations, both the direct sum and the tensor product become $\eta$-representations. In particular, we may define an irreducible Real $\eta$-representation as one which does not admit any nontrivial direct sum decompositions.

\begin{proposition}[\cite{Sch14}]\label{prop:quasi-abelian}
The following are equivalent for an augmented Lie group $\eta$, in which case we refer to $\eta$ as quasi-abelian.
\begin{enumerate}
	\item Every irreducible Real $\eta$-representation has $\C$-dimension $1$.
	\item $G$ embeds as an augmented closed subgroup of $\widehat{\mathbb{T}^n} := \mathbb{T}^n \rtimes C_2$ for some torus $\mathbb{T}^n$.
	\item The kernel of the augmentation $Ker(\eta)$ is abelian and every element of $G$ which augments to the nontrivial element of $C_2$ has order $2$.
	\item $G \cong A \rtimes C_2$ for some abelian $A$ with $C_2$ acting by inversion, or $G$ is abelian and has the trivial augmentation.
\end{enumerate}
\end{proposition}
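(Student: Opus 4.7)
The plan is to close the cycle $(4) \Rightarrow (2) \Rightarrow (1) \Rightarrow (3) \Rightarrow (4)$, handling the easy semidirect-product bookkeeping first and saving $(1) \Rightarrow (3)$, which requires the most representation-theoretic input, for last. For $(3) \Rightarrow (4)$: if $\eta$ is trivial then $G = Ker(\eta)$ is abelian, so assume $\eta$ is surjective and pick $g \in G$ with $\eta(g) \neq 1$. The hypothesis gives $g^2 = e$, and applying the same hypothesis to $ag$ for any $a \in Ker(\eta)$ yields $(ag)^2 = e$, which rearranges to $g a g^{-1} = a^{-1}$; hence the splitting $\langle g \rangle \hookrightarrow G$ exhibits $G$ as $Ker(\eta) \rtimes C_2$ with $C_2$ acting by inversion.

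For $(4) \Rightarrow (2)$, the key input is that any compact abelian Lie group $A$ embeds as a closed subgroup of some torus $\mathbb{T}^n$, either via the structure theorem $A \cong \mathbb{T}^k \times F$ with $F$ finite abelian (embedding $F$ as roots of unity) or directly from Pontryagin duality. Any group homomorphism between abelian groups automatically commutes with inversion, so the embedding is $C_2$-equivariant and extends to an augmented embedding $A \rtimes C_2 \hookrightarrow \mathbb{T}^n \rtimes C_2 = \widehat{\mathbb{T}}^n$; the trivially-augmented abelian case is handled by embedding $G$ inside the torus subgroup $\mathbb{T}^n \subset \widehat{\mathbb{T}}^n$. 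For $(2) \Rightarrow (1)$, I would first classify the irreducible Real representations of $\widehat{\mathbb{T}}^n$ itself: each character $\chi : \mathbb{T}^n \to U(1)$ yields a $\C$-$1$-dimensional Real $\widehat{\mathbb{T}}^n$-representation on $\C$ with the nontrivial coset acting by complex conjugation $v \mapsto \bar v$, the semidirect-product relation being satisfied because inversion on $\mathbb{T}^n$ sends $\chi$ to $\bar\chi$; a Real Peter--Weyl argument shows these exhaust the Real irreducibles. Given a closed augmented inclusion $G \subset \widehat{\mathbb{T}}^n$ and a Real $\eta$-irrep $V$ of $G$, Frobenius reciprocity in the Real setting realizes $V$ as a $G$-summand of the restriction of some Real $\widehat{\mathbb{T}}^n$-irreducible $W$; since $\dim_\C W = 1$, so is $\dim_\C V$.

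The crux is $(1) \Rightarrow (3)$. Given any complex irreducible representation $W$ of $K := Ker(\eta)$, I would construct a Real $\eta$-representation $V$ of $G$ containing $W$ as a $K$-subrepresentation — concretely, a Real $\eta$-induction along $K \subset G$, which can be modeled as $W \oplus \bar W$ with $K$ acting via the original and conjugate representations on the two summands and any fixed $\sigma \in G$ with $\eta(\sigma) \neq 1$ acting by an anti-linear swap. Applying hypothesis (1) decomposes $V$ into $\C$-$1$-dimensional Real $\eta$-irreducibles, and restricting to $K$ presents $W$ as a $K$-summand of a direct sum of characters, forcing $\dim_\C W = 1$; since this holds for every complex irreducible, $K$ is abelian. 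For the order-$2$ condition, any $g \in G$ with $\eta(g) \neq 1$ acts on a $\C$-$1$-dimensional Real $\eta$-representation by an anti-linear involution of the form $v \mapsto \lambda \bar v$ with $|\lambda| = 1$, whose square is the identity; thus $g^2 \in K$ acts trivially on every character of $K$, and since characters separate points of a compact abelian Lie group we conclude $g^2 = e$.

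The main obstacle I anticipate is making the Real representation theory of augmented groups sufficiently robust to support the above: specifically, the Real analogues of Peter--Weyl and Frobenius reciprocity used in $(2) \Rightarrow (1)$, together with a well-defined functorial Real $\eta$-induction producing the $V$ in $(1) \Rightarrow (3)$. Everything else is formal, so essentially the entire content of the proposition rests on setting up this Real-representation-theoretic infrastructure, which should be available in the framework of \cite{Sch14}.
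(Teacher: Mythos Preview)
Your argument is correct, but you run the implications around a different cycle than the paper does: the paper proves $(1) \Rightarrow (2) \Rightarrow (3) \Rightarrow (4) \Rightarrow (1)$, sharing only the step $(3) \Rightarrow (4)$ with your route. The paper's $(1) \Rightarrow (2)$ is a one-liner: take a faithful Real $\eta$-representation, use hypothesis (1) to split it into $\C$-$1$-dimensional summands, and observe that each summand is classified by an augmented map $G \to \mathbf{L}^\C(\C,\C) \cong \widehat{\mathbb{T}}$, so their product embeds $G$ into $\widehat{\mathbb{T}^n}$. The paper's $(4) \Rightarrow (1)$ is similarly direct: since $Ker(\eta)$ is abelian, the underlying $Ker(\eta) \to U(n)$ of any Real representation lands in a maximal torus, forcing the augmented classifying map to factor through $\widehat{\mathbb{T}^n}$ and hence the representation to split into lines. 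Thus the paper never needs Real Peter--Weyl, Real Frobenius reciprocity, or a Real induction functor --- precisely the infrastructure you flagged as the main obstacle. Your route works and is perhaps more conceptually satisfying in that $(1) \Rightarrow (3)$ extracts the structural consequence directly from the representation-theoretic hypothesis, but the paper's ordering sidesteps all of that machinery by leveraging the existence of a faithful representation and the maximal-torus trick.
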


\begin{proof}
(1) $\implies$ (2) First note that there is a faithful Real $\eta$-representation (in particular if $V$ is a faithful $Ker(\eta)$-representation over $\R$ then a modification of $Ind_{Ker(\eta)}^{G} V$ may be made to work). This representation is a direct sum of one-dimensional representations, specified by maps $G \rightarrow \mathbf{L}^\C( \C, \C ) \cong \widehat{\mathbb{T}}$. Thus the classifying map of the original representation factors through $\widehat{\mathbb{T}} \times_{C_2} \widehat{\mathbb{T}} \times_{C_2} ... \times_{C_2} \widehat{\mathbb{T}} \cong \widehat{\mathbb{T}^n}$. This factorization gives the desired embedding.

(2) $\implies$ (3) Note that (3) holds for $\widehat{\mathbb{T}^n}$. Furthermore, if (3) holds for an augmented group, it holds for any augmented subgroup. Thus (3) holds for any augmented subgroup of $\widehat{\mathbb{T}^n}$.

(3) $\implies$ (4) If $G$ does not have the trivial augmentation, then we can choose an order two element of $G$ which augments to the nontrivial element of $C_2$. This specifies a semidirect product expression of $G$ as $G \rtimes C_2$.

(4) $\implies$ (1) In either case, any Real $\eta$-representation is determined by a map $G \rightarrow \mathbf{L}^\C(\C^n,\C^n)$. The map of underlying non-augmented maps is $Ker(\eta) \rightarrow U(n)$, and since $Ker(\eta)$ is abelian, the image is contained in a maximal torus of $U(n)$. Thus $G \rightarrow \mathbf{L}^\C(\C^n,\C^n)$ factors through $\widehat{\mathbb{T}^n}$. This factorization induces a direct sum decomposition into one-dimensional representations. In particular, any irreducible representation is one-dimensional.
\end{proof}

Next, we will relate the character groups and representation rings of abelian compact Lie groups $G$ with their associated augmented compact Lie groups $\hat{G} = G \rtimes C_2$. 

\begin{definition}
    A Real character of an augmented compact Lie group $\hat{G} = G \rtimes C_2$ is an augmented map $\hat{\alpha}: \hat{G} \to \widehat{\mathbb{T}}$. We write $\widehat{\text{Hom}}(\hat{G},\widehat{\mathbb{T}})$ for the Real character group of $\hat{G}$.
\end{definition}

Suppose $\hat{G} = G \rtimes C_2$, and suppose $\hat{\alpha}:\hat{G} \to \widehat{\mathbb{T}}$ is a Real character of $\hat{G}$. Then the restriction $\hat{\alpha}\mid_G$ factors through $\mathbb{T} \subset \widehat{\mathbb{T}}$. In other words, $\hat{\alpha}$ determines a complex character $\alpha:G \to \mathbb{T}$. This determines a group homomorphism
\[ \begin{tikzcd} 
\widehat{\text{Hom}}(\hat{G},\widehat{\mathbb{T}}) \ar[r,"\psi"] &  \text{Hom}(G,\mathbb{T})
\end{tikzcd} \]
from the Real character group of $\hat{G}$ to the complex character group of $G$. The kernel of $\psi$ consists of those Real characters $\hat{\alpha} : \hat{G} \to \widehat{\mathbb{T}}$ which map $G \subset \hat{G}$ to $1 \in \mathbb{T}$. Such a map is determined by the image of $(e,\gamma)$, which may be any element of the form $(w,\gamma)$ for $w \in \mathbb{T}$. In other words, the kernel of $\psi$ is isomorphic to $\mathbb{T}$. Moreover, the map $\psi$ admits a section: If $\alpha: G \to \mathbb{T}$ is a complex character of $G$, then we may apply $- \rtimes C_2$ to obtain a Real character $\hat{\alpha} = \alpha \rtimes C_2$ of $\hat{G}$, whose restriction to $G \subset \hat{G}$ is $\alpha$. We record our observations below.

\begin{lemma}\label{lemma:short-exact-sequence}
    If $G$ is an abelian compact Lie group, and $\hat{G} = G \rtimes C_2$, then there is a canonically split short exact sequence 
    \[ \begin{tikzcd} 
    1 \ar[r] & \mathbb{T} \ar[r] & \widehat{\text{Hom}}(\hat{G},\widehat{\mathbb{T}}) \ar[r,"\psi"] & \text{Hom}(G,\mathbb{T}) \ar[r] & 1.
    \end{tikzcd} \]
\end{lemma}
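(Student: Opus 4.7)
The plan is to formalize the observations made in the paragraph preceding the lemma, verifying each piece of the short exact sequence in turn: define $\psi$ by restriction, compute its kernel explicitly, and then construct the canonical section. Since $\hat{G} = G \rtimes C_2$ is built from the inversion action, all arithmetic boils down to tracking how the generator $(e,\gamma)$ of the $C_2$ factor acts.

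First I would verify that $\psi$ is a well-defined group homomorphism. Given a Real character $\hat{\alpha} : \hat{G} \to \widehat{\mathbb{T}}$, the augmentation condition forces $\hat{\alpha}(G) \subset \mathbb{T}$ since $G \subset \hat{G}$ is precisely the kernel of the augmentation. So $\hat{\alpha}|_G$ lands in $\mathbb{T}$ and gives a complex character $\alpha : G \to \mathbb{T}$; this assignment is evidently a homomorphism in $\hat{\alpha}$. Next I would identify $\ker \psi$ with $\mathbb{T}$. An element of the kernel is a Real character $\hat{\alpha}$ with $\hat{\alpha}|_G \equiv 1$, and such a character is determined by its value on $(e,\gamma)$. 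The augmentation condition forces $\hat{\alpha}(e,\gamma) = (w,\gamma)$ for some $w \in \mathbb{T}$. The only nontrivial relation to check is $(e,\gamma)^2 = (e,e)$, and indeed $(w,\gamma)^2 = (w \cdot \gamma(w), e) = (w \cdot w^{-1}, e) = (e,e)$ in $\widehat{\mathbb{T}}$, so every $w \in \mathbb{T}$ yields a valid character. The assignment $w \mapsto \hat{\alpha}_w$ is visibly a homomorphism, giving the desired isomorphism $\mathbb{T} \cong \ker \psi$.

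For surjectivity and splitting, I would construct the section functorially: given $\alpha : G \to \mathbb{T}$, apply the semidirect product functor $-\rtimes C_2$ (using that $C_2$ acts by inversion on both $G$ and $\mathbb{T}$, which is compatible with any complex character) to obtain $\alpha \rtimes C_2 : \hat{G} \to \widehat{\mathbb{T}}$. This is a Real character by construction, and restricting back to $G$ recovers $\alpha$, so $\psi \circ (-\rtimes C_2) = \id$. This simultaneously establishes surjectivity of $\psi$ and provides the canonical splitting.

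The main obstacle, if one can call it that, is purely bookkeeping: confirming that inversion is indeed compatible with every character so that $\alpha \rtimes C_2$ makes sense, which follows because any continuous homomorphism $\alpha : G \to \mathbb{T}$ automatically satisfies $\alpha(g^{-1}) = \alpha(g)^{-1}$. There is no real content beyond unwinding the definitions, so the proof should be short.
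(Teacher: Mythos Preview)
Your proposal is correct and follows exactly the approach the paper takes: the paper's proof is precisely the paragraph preceding the lemma (``We record our observations below''), and you have faithfully expanded those observations with the additional verification that $(w,\gamma)^2 = (e,e)$ in $\widehat{\mathbb{T}}$ and that any character $\alpha$ is automatically equivariant for inversion. There is nothing to add.
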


While there are strictly more Real characters of $\hat{G}$ than there are complex characters of $G \subset \hat{G}$, the following proposition implies that up to isomorphism, irreducible Real representations of $\hat{G}$ do correspond bijectively to irreducible complex representations of $G$ via restriction.

\begin{lemma}\label{lemma:isomorphic-real-reps}
    Suppose $\hat{G} = G \rtimes C_2$, and suppose  $\hat{\alpha},\hat{\beta}:\hat{G} \to \widehat{\mathbb{T}}$ are Real characters of $\hat{G}$. If $\hat{\alpha} \mid_G = \hat{\beta} \mid_G$, then $\hat{\alpha}$ and $\hat{\beta}$ are isomorphic as Real representations of $\hat{G}$.
\end{lemma}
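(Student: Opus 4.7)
The plan is to exhibit an explicit $\C$-linear isometry $\phi : \C \to \C$ that intertwines $\hat{\alpha}$ and $\hat{\beta}$. By the splitting Lemma \ref{lemma:short-exact-sequence}, if $\hat{\alpha}\mid_G = \hat{\beta}\mid_G$, then $\hat{\alpha}$ and $\hat{\beta}$ differ by an element in the kernel of $\psi$, so one can write $\hat{\alpha}(e,\gamma) = (a,\gamma)$ and $\hat{\beta}(e,\gamma) = (b,\gamma)$ for some $a,b \in \mathbb{T}$. Concretely, under the isomorphism $\widehat{\mathbb{T}} \cong \mathbf{L}^{\C}(\C,\C)$, the element $(a,\gamma)$ acts on $\C$ by the conjugate-linear map $z \mapsto a\bar{z}$, and similarly for $(b,\gamma)$.

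I will look for $\phi$ of the form $\phi(z) = \lambda z$ with $\lambda \in \mathbb{T}$. Equivariance under $G \subset \hat{G}$ is automatic: on $G$ the two actions agree and act by $\C$-scalars, which commute with $\phi$. Equivariance under $(e,\gamma)$ reduces to the equation $\lambda a \bar{z} = b \overline{\lambda z} = b\bar{\lambda}\bar{z}$, i.e.\ $\lambda a = b\bar{\lambda}$. Multiplying by $\lambda$ and using $|\lambda| = 1$, this becomes $\lambda^{2} = b a^{-1}$. Since $\mathbb{T}$ is divisible (or simply because every element of $\mathbb{T}$ has a square root), such a $\lambda$ exists. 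Since arbitrary elements of $\hat{G}$ factor as products of elements of $G$ and $(e,\gamma)$, equivariance under the generators implies equivariance everywhere, so $\phi$ is the desired isomorphism of Real representations.

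There is no real obstacle here; the only substantive ingredient is the existence of square roots in $\mathbb{T}$, and the bookkeeping of how conjugate-linear generators interact with a $\C$-linear intertwiner. I would simply record the above computation and conclude. (If one wanted a more conceptual phrasing, one could observe that the kernel $\mathbb{T}$ of $\psi$ in Lemma \ref{lemma:short-exact-sequence} acts on isomorphism classes of one-dimensional Real $\hat{G}$-representations extending a fixed $\alpha : G \to \mathbb{T}$, and the divisibility of $\mathbb{T}$ forces this action to factor through the trivial quotient $\mathbb{T}/\mathbb{T}^{2} = 1$, so any two such extensions are isomorphic.)
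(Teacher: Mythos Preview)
Your proof is correct and essentially identical to the paper's: both exhibit the intertwiner as multiplication by a square root of $ba^{-1}$ (the paper writes $\sqrt{\zeta_2\zeta_1^{-1}}$ in place of your $\lambda$), reducing equivariance under the odd generator to the same computation $\lambda a = b\bar{\lambda}$. The only cosmetic difference is that you derive the condition on $\lambda$ by solving an equation, whereas the paper simply writes down the formula and verifies it.
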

\begin{proof}
    We know that $\hat{\alpha}(e,\gamma) = (\zeta_1,\gamma)$ and $\hat{\beta}(e,\gamma) = (\zeta_2,\gamma)$ for some $\zeta_1,\zeta_2 \in \mathbb{T}$. We claim that the map $\varphi:\mathbb{C} \to \mathbb{C}$ given by $\varphi(z) = \sqrt{\zeta_2\zeta_1^{-1}}$ specifies an isomorphism $\hat{\alpha} \cong \hat{\beta}$. We know that $\hat{\alpha}$ and $\hat{\beta}$ agree on $G \subset \hat{G}$, so it suffices to check
    \begin{align*}
    \varphi(\hat{\alpha}(e,\gamma) z) = \varphi((\zeta_1,\gamma) z) = \varphi(\zeta_1\bar{z}) & = \sqrt{\zeta_2\zeta_1^{-1}}\zeta_1\bar{z}\\
    & = \zeta_2 \overline{\sqrt{\zeta_2\zeta_1^{-1}} z}\\
    & = \hat{\beta}(e,\gamma) \varphi(z).
    \end{align*}
\end{proof}

For a compact Lie group $G$, we write $RO(G)$ for the real representation ring of $G$, and $RU(G)$ for the complex representation ring of $G$. For an augmented compact Lie group $\hat{G}$, we will write $RR(\hat{G})$ for the Real representation ring of $\hat{G}$, which is the Grothendieck ring of isomorphism classes of Real $\hat{G}$-representations. If $\hat{G}$ is an augmented compact Lie group with $G = \text{Ker}(\hat{G} \to C_2)$, then we may define a map $RR(\hat{G}) \to RU(G)$ by restricting the action of $\hat{G}$ on $V$ to the subgroup $G \subset \hat{G}$. We call this the {\it restriction} map.

The category of $\eta$-representations has a symmetric monoidal structure, essentially given by tensor product. To define this, it suffices by universality to define an appropriate $\mathbf{L}^\C(\C^{nk},\C^{nk})$ action on $\C^n \otimes \C^k$. To construct this, observe the isomorphism $\C^n \otimes \C^k \cong \C^{nk}$ obtained by the choice of basis $e_i \otimes e_j$ for $\C^n \otimes \C^k$ (where $i = 1,...,n$ and $j = 1,...,k$). Now pull back the action of $\mathbf{L}^\C(\C^{nk},\C^{nk})$ on $\C^{nk}$ via this isomorphism.

\begin{definition}
    Let $\eta : G \rightarrow C_2$ any augmented Lie group. Define the representation ring $RR(\eta)$ as the Grothendieck group completion of the additive monoid of finite dimensional $\eta$-representations under direct sum. It becomes a ring under tensor product.
\end{definition}

\begin{lemma}[\cite{Sch14}]
\label{lem:RR-RU-identification}
     Let $G = A \rtimes C_2$ with $A$ abelian. Then the restriction (forgetful) map $RR(\eta)\to RU(A)$ is an isomorphism.
\end{lemma}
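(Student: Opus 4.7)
The plan is to reduce the statement to a bijection on isomorphism classes of characters. By Proposition~\ref{prop:quasi-abelian}, since $G = A \rtimes C_2$ is quasi-abelian, every irreducible Real $\eta$-representation has $\C$-dimension one; and since $A$ is abelian compact, every irreducible complex $A$-representation is also one-dimensional. The restriction map is manifestly a ring homomorphism (restriction preserves direct sums and tensor products by construction), so once we establish complete reducibility on both sides, it will suffice to show that restriction induces a bijection between isomorphism classes of Real characters $G \to \widehat{\mathbb{T}}$ and complex characters $A \to \mathbb{T}$.

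The bijection on characters follows immediately from the two preceding lemmas. For surjectivity on isomorphism classes: given a complex character $\alpha : A \to \mathbb{T}$, the canonical section of $\psi$ in Lemma~\ref{lemma:short-exact-sequence} produces the Real character $\alpha \rtimes C_2$ of $G$, whose restriction to $A$ is $\alpha$. For injectivity on isomorphism classes: Lemma~\ref{lemma:isomorphic-real-reps} tells us that any two Real characters of $G$ which agree on $A$ are isomorphic as Real $G$-representations.

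The step requiring the most care, and the main obstacle, will be complete reducibility: any finite-dimensional Real $\eta$-representation decomposes as a direct sum of irreducibles. I would fix a Hermitian inner product on $V$ such that the kernel $A$ acts unitarily and elements augmenting nontrivially act as antiunitary maps; such a form exists by writing $G = A \sqcup \gamma A$ for a chosen $\gamma$ of order two augmenting nontrivially, and averaging an arbitrary Hermitian form first over $A$ and then symmetrizing under $\gamma$ via $\langle v,w\rangle \mapsto \tfrac{1}{2}(\langle v,w\rangle + \overline{\langle \gamma v, \gamma w\rangle})$. Given such a form and a Real sub-representation $W \subset V$, the complex-orthogonal complement $W^\perp$ is again a Real sub-representation: for $g$ augmenting nontrivially, $w \in W^\perp$, and $v \in W$ one computes
\[ \langle v, gw\rangle \;=\; \overline{\langle g^{-1}v, w\rangle} \;=\; 0, \]
since $g^{-1}v \in W$. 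The $\C$-linear case is standard. Iterating yields the desired decomposition into one-dimensional irreducibles, at which point the bijection on characters established above upgrades to the claimed ring isomorphism $RR(\eta) \cong RU(A)$.
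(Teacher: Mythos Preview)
Your proof is correct. The surjectivity argument---lifting a character $\alpha:A\to\mathbb{T}$ via $-\rtimes C_2$---is exactly the paper's. For injectivity you take a different route: the paper cites \cite{Sch14} for the fact that the composite $RR(\eta)\xrightarrow{res}RU(A)\xrightarrow{tr}RR(\eta)$ is multiplication by $2$, and then uses freeness of both groups to conclude that $res$ is injective; you instead establish a direct bijection on isomorphism classes of irreducibles by invoking Lemma~\ref{lemma:isomorphic-real-reps}. Your approach is more self-contained and makes the correspondence of irreducibles explicit, at the cost of spelling out complete reducibility; the paper's transfer argument is slicker but black-boxes both the transfer identity and the freeness of $RR(\eta)$.

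One minor simplification: by the paper's definition, a Real representation is already an augmented map $G\to\mathbf{L}^\C(V,V)$, so elements of $G$ act as unitary or antiunitary operators with respect to the given Hermitian inner product on $V$. Your averaging construction is therefore unnecessary---the orthogonal-complement step goes through directly with the built-in form.
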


\begin{proof}
    By \cite{Sch14}, the composite \[ RR(\eta) \xrightarrow{res} RU(A) \xrightarrow{tr} RR(\eta) \]
    is multiplication by $2$. Since each group is free abelian, we deduce that restriction is injective.

    To show that restriction is surjective, take any character $\alpha: A \to U(1)$. Applying $-\rtimes C_2$ we can extend it to a map \[ \widehat{\alpha} : G \to \widehat{U(1)}. \] We have $res^{G}_A (\widehat{\alpha})=\alpha$, which concludes the proof.
\end{proof}

\begin{definition}
    If $\eta : G \rightarrow C_2$ is an augmented compact Lie group, then a complete Real $\eta$-universe is a Real representation $\mathcal{U}_{\eta}$ of $\eta$ which is of countable dimension, and into which every finite-dimensional Real representation of $\eta$ embeds.
\end{definition}

\begin{lemma}[\cite{Sch14}]
\label{lem:existence-of-complete-Real-universes}
    If $\eta$ is quasi-abelian, then there exists a complete Real $\eta$-universe $\mathcal{U}_{\eta}$. 
\end{lemma}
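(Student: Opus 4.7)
The plan is to build $\mathcal{U}_\eta$ directly as a countable algebraic direct sum, using Proposition 2.3 to guarantee that every finite-dimensional Real $\eta$-representation is a finite orthogonal sum of one-dimensional irreducibles, and using Lemmas 2.5 and 2.6 to show that there are only countably many isomorphism classes of such one-dimensional Real representations.

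First I would enumerate the isomorphism classes of irreducible Real $\eta$-representations. By Proposition 2.3(1) they are all one-dimensional. In the trivially augmented case of Proposition 2.3(4), a Real $\eta$-representation is just a complex unitary representation of the compact abelian Lie group $G$, and its one-dimensional irreducibles are parametrized by the complex character group $\Hom(G,\mathbb{T})$. In the non-trivially augmented case $\hat G = G \rtimes C_2$, Lemma 2.6 says that any two Real characters of $\hat G$ agreeing on $G$ are isomorphic, while the surjectivity part of Lemma 2.5 (via the section $\alpha \mapsto \alpha \rtimes C_2$) shows that every complex character of $G$ is the restriction of some Real character. So isomorphism classes of one-dimensional Real $\hat G$-representations are again in bijection with $\Hom(G,\mathbb{T})$. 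Since $G$ is a compact abelian Lie group, it is isomorphic to $\mathbb{T}^n \times F$ for a finite abelian group $F$, so its complex character group $\Z^n \oplus F^{\vee}$ is countable.

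Next, picking a representative $V_\alpha$ for each isomorphism class $\alpha$ in the countable set $\mathrm{Irr}(\eta)$, I would define
\[ \mathcal{U}_\eta \;:=\; \bigoplus_{\alpha \in \mathrm{Irr}(\eta)} \bigoplus_{k \in \mathbb{N}} V_\alpha, \]
equipped with the orthogonal direct sum inner product and the diagonal $\eta$-action. As a countable direct sum of one-dimensional complex inner product spaces, $\mathcal{U}_\eta$ has countable complex dimension. Given any finite-dimensional Real $\eta$-representation $W$, the one-dimensional decomposition used in the proof of Proposition 2.3 (or equivalently complete reducibility for compact groups together with the fact that every irreducible is one-dimensional) yields an orthogonal decomposition $W \cong \bigoplus_{j=1}^{m} V_{\alpha_j}$. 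Since each $V_{\alpha_j}$ appears $\aleph_0$ times inside $\mathcal{U}_\eta$, I can choose distinct indices $k_j \in \mathbb{N}$ and embed the $j$-th summand of $W$ isometrically and $\eta$-equivariantly into the $(\alpha_j,k_j)$-th summand of $\mathcal{U}_\eta$.

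The only real obstacle is the countability of $\mathrm{Irr}(\eta)$, and this is already handled by Lemmas 2.5 and 2.6 combined with the standard structure of compact abelian Lie groups. Once countability is granted, both the construction of $\mathcal{U}_\eta$ and the verification of its universal embedding property are essentially formal.
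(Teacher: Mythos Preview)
Your argument is correct and is a genuinely different construction from the paper's. The paper does not classify irreducibles first; instead, in the non-trivially augmented case $\hat G = A \rtimes C_2$ it begins with a complete \emph{complex} universe $\mathcal{U}_A$ for $A$, writes it as a sequential colimit of finite-dimensional $A$-subrepresentations $V_i$, promotes each $V_i$ to a Real $\hat G$-representation by applying $-\rtimes C_2$ to the classifying map $A \to U(n)$, and then takes the colimit, invoking Lemma~\ref{lem:RR-RU-identification} to verify completeness. Your approach instead extracts directly from Proposition~\ref{prop:quasi-abelian} and Lemmas~\ref{lemma:short-exact-sequence}--\ref{lemma:isomorphic-real-reps} that there are only countably many isomorphism classes of one-dimensional Real $\eta$-representations and then builds $\mathcal{U}_\eta$ as the obvious $\aleph_0$-fold sum. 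What you gain is a cleaner, self-contained proof that does not need the existence of a complete complex universe as an input and makes the structure of $\mathcal{U}_\eta$ completely explicit; what the paper's route buys is that it avoids classifying irreducibles altogether and packages the passage from complex to Real in a single functorial step, which meshes well with the way the paper later compares $M\R_{\hat G}$ to $MU_G$.
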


\begin{proof}
If $\eta : G \rightarrow C_2$ is the trivial augmentation, then $G$ is abelian, and a Real $\eta$-representation is the same thing as a real (ie over $\mathbb{R}$) $G$-representation. Thus a complete Real universe of $\eta$ is simply a complete real universe of $G$, which is known to exist.

If $\eta$ is nontrivial, then $G = A \rtimes C_2$ with $A$ abelian. Start with a complete complex universe $\mathcal{U}_A$ for $A$, and write it as a sequential colimit over finite-dimensional sub-$A$-representations $V_i$. Such a representation is classified by a map $A \rightarrow U(\C^n)$ for some $n$, and we may apply $- \rtimes C_2$ to obtain a map $G \rightarrow U(\C^n) \rtimes C_2 = \mathbf{L}^\C(\C^n,\C^n)$ describing a Real $\eta$ representation $\widehat{V_i}$. Using Lemma \ref{lem:RR-RU-identification} one may then show that the sequential colimit of the spaces $V_i$ will then be a complete Real $\eta$-universe.
\end{proof}

\subsection{Real global homotopy theory}

We are interested in the special case of Schwede's $C$-global homotopy theory developed in some detail in \cite{Sch22} in the case $C = C_2$. To start with, define an orthogonal $C_2$-spectrum as an orthogonal spectrum equipped with an action of $C_2$ by automorphisms of orthogonal spectra. Given such a spectrum $X$ and an augmented group $\eta : G \rightarrow C_2$, it has $G$-equivariant homotopy groups $\pi_k^G(\eta^* X)$ defined as in \cite[3.1.11]{Sch18}.

\begin{definition}
    A $C_2$-global equivalence is a map $f : X \rightarrow Y$ of orthogonal $C_2$-spectra such that for every augmented compact Lie group $\eta : G \rightarrow C_2$ we have \[ \pi_k^G( \eta^* f) : \pi_k^G(\eta^* X) \rightarrow \pi_k^G(\eta^* Y) \] is an isomorphism.
\end{definition}

We aim to invert such equivalences to form a homotopy category.

\begin{theorem}[{\cite{Sch22}}]
\label{thm:cofibration-cat-structure}
    There is a cocomplete stable cofibration category structure on the category of orthogonal $C_2$-spectra such that the weak equivalences are the $C_2$-global equivalences.
\end{theorem}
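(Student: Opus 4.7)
The plan is to adapt Schwede's construction of the global cofibration category structure on orthogonal spectra \cite[Ch.~4]{Sch18} to the $C_2$-equivariant setting. First I would identify orthogonal $C_2$-spectra as the $C_2$-objects in orthogonal spectra, and declare the cofibrations to be those generated (under the small object argument) by semifree $C_2$-spectra of the form $F_V(\partial D(W)_+) \to F_V(D(W)_+)$, where $V$ ranges over Real representations of augmented compact Lie groups $\eta : G \to C_2$ (a cofinal supply of such being afforded by Lemma \ref{lem:existence-of-complete-Real-universes}) and $W$ is a Real $\eta$-representation disk. This choice is essentially forced on us by the goal of detecting $C_2$-global equivalences, which are defined levelwise for all augmented $\eta$ simultaneously.

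Second, I would apply the small object argument to obtain functorial factorizations of every morphism as a cofibration followed by a trivial cofibration, where ``trivial'' means $C_2$-global equivalence. The generating trivial cofibrations are the Real-equivariant analogues of the classical representation-sphere stabilization maps, adapted to the augmented setting so that formally inverting them produces exactly the $C_2$-global equivalences. This step also provides stability of the resulting homotopy category, since suspension by any Real $\eta$-representation sphere becomes an equivalence by construction; cocompleteness is inherited from the underlying category of orthogonal spectra, which has all small colimits preserved by the $C_2$-action.

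The main obstacle, and where the nontrivial content resides, is verifying the gluing lemma: pushouts along cofibrations must preserve $C_2$-global equivalences. I expect to reduce this to the existence of natural long exact sequences for the equivariant homotopy groups $\pi_k^G(\eta^*{-})$ attached to pushouts of orthogonal $C_2$-spectra, uniformly across all augmented compact Lie groups $\eta : G \to C_2$. The non-augmented analogue in global homotopy theory is handled by analyzing the cofibration structure on level-wise $G$-spaces and invoking the equivariant Whitehead theorem; in the augmented setting, the same strategy should work, provided the Real universes $\mathcal{U}_\eta$ of Lemma \ref{lem:existence-of-complete-Real-universes} behave compatibly with restriction along augmented subgroup inclusions so that a single set of generating cofibrations suffices. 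The remaining axioms (closure of cofibrations under composition and pushout, 2-out-of-3 and closure under retracts for $C_2$-global equivalences, and cofibrancy of the initial object) follow in a routine fashion from the corresponding statements for orthogonal spectra with honest $G$-actions, applied to every $\eta$ in turn.
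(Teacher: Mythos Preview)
The paper does not give its own proof of this statement. Theorem~\ref{thm:cofibration-cat-structure} is simply attributed to \cite{Sch22} (see also the discussion in the introduction, where the authors note that the $C_2$-global theory is treated rigorously in \cite[Appendix~A]{Sch22}); no argument is reproduced or sketched. So there is nothing in the paper to compare your proposal against beyond the bare citation.

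Your outline is a plausible reconstruction of how such a proof would go, following the template of \cite[Ch.~4]{Sch18}. One point to watch: you invoke Lemma~\ref{lem:existence-of-complete-Real-universes} to supply complete Real universes for the generating cofibrations, but that lemma is stated only for \emph{quasi-abelian} augmented Lie groups, whereas the $C_2$-global equivalences in Definition preceding Theorem~\ref{thm:cofibration-cat-structure} are tested against \emph{all} augmented compact Lie groups $\eta:G\to C_2$. In Schwede's actual treatment the generating (acyclic) cofibrations are built from semifree orthogonal $C_2$-spectra indexed on orthogonal $C_2$-representations, not on Real $\eta$-universes per se, and the detection of equivalences across all $\eta$ comes from the representability of $\pi_k^G(\eta^*{-})$ rather than from a cofinality statement about universes. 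So your sketch has the right shape but leans on the wrong lemma at that step; if you were to write this out in full you would need to either extend the universe argument beyond the quasi-abelian case or, more simply, follow \cite{Sch22} and work directly with the orthogonal-spectrum-level generators.
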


\begin{definition}
    The Real global homotopy category is defined as the homotopy category associated to the cofibration category structure of Theorem \ref{thm:cofibration-cat-structure}. We denote it by $\mathcal{GH}_{C_2}$ in accordance with \cite{Sch22}.
\end{definition}

Now we may unpack some consequences of the definitions. First, a Real global spectrum $E$ is just an orthogonal $C_2$-spectrum, viewed as an object of the cofibration category of Theorem \ref{thm:cofibration-cat-structure}. For each quasi-abelian augmented Lie group $\eta : G \rightarrow C_2$, the pullback functor $\eta^*:Sp^{C_2}\to Sp^G$ already sends $C_2$-global equivalences to genuine $G$-equivariant homotopy equivalences, thus naturally descends to a functor on the homotopy categories $\eta^*:\mathcal{GH}_{C_2}\to G-\mathcal{GH}$. 

In particular, if $E$ is a Real global spectrum, then we get, for each augmented quasi-abelian $\eta : G \rightarrow C_2$, a genuine $G$-spectrum $\eta^{*}E=E_\eta$. When the augmentation is obvious we may sometimes write $E_G$ instead of $E_{\eta}$, for example when we have the trivial augmentation $G \rightarrow C_2$, or we may write $E_{\hat{G}}$ for the projection augmentation $\hat{G} := G \rtimes C_2 \rightarrow C_2$.

The point-set level functor $\eta^*:Sp^{C_2}\to Sp^G$ is strong symmetric monoidal, since the smash products in both categories are defined by smash products on the underlying spectra equipped with diagonal group actions. This will be enough for us to deduce each $M\R_{\eta}$ is a commutative ring in orthogonal $G$-spectra. We believe the derived functor of $\eta^*$ is strong symmetric monoidal, but we do not need this fact. (One ought to be able to prove this via some results analogous to that of \cite[p448]{Sch18}, by showing a $C_2$-flat ring, given the pullback $G$-action, is $G$-flat.)

If $E$ is a Real global spectrum, then each cohomology theory $E_\eta^*$ has a natural extension to an $RO(C_2)$-grading. Namely, $E_\eta^*$ has a natural $RO(G)$-grading, and we have $\eta^* : RO(C_2) \rightarrow RO(G)$. If $\eta$ is the trivial augmentation, then letting $|\alpha|$ denote the virtual dimension of $\alpha \in RO(C_2)$, we have $E_\eta^{\alpha}(-) \cong E_\eta^{|\alpha|}(-)$.

\begin{proposition}
\label{prop:RO(C_2)-grading-is-well-defined}
    The $RO(C_2)$ grading on $E_\eta^*$ is compatible with restriction along morphisms of augmented groups.
\end{proposition}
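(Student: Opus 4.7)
The plan is to unwind the definitions and observe that the required compatibility is essentially a tautology arising from the condition $\mu \circ f = \eta$ that is built into the notion of a morphism of augmented groups.

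First, I will recall the setup. Pulling back Real $C_2$-representations along $\eta : G \to C_2$ yields a ring map $\eta^* : RO(C_2) \to RO(G)$, and the $RO(C_2)$-grading on $E_\eta^*$ is defined by $E_\eta^\alpha(X) := E_\eta^{\eta^*\alpha}(X)$, using the standard $RO(G)$-grading on the genuine $G$-spectrum $E_\eta$. A morphism of augmented groups from $\eta$ to $\mu : H \to C_2$ is a homomorphism $f : G \to H$ satisfying $\mu \circ f = \eta$. Since $E_\eta = \eta^* E$ is defined at the point-set level and pullback of orthogonal $C_2$-spectra is strictly functorial, we obtain a canonical identification $E_\eta = f^* E_\mu$, where $f^* : Sp^H \to Sp^G$ is ordinary restriction of genuine equivariant spectra. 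The standard theory of $RO(G)$-graded cohomology then provides a restriction map $f^* : E_\mu^V(X) \to E_\eta^{f^*V}(f^*X)$ for every $V \in RO(H)$.

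Next, I will verify the single piece of bookkeeping that constitutes the proof: the triangle of representation rings
\[ \begin{tikzcd}
RO(C_2) \arrow{r}{\mu^*} \arrow[swap]{dr}{\eta^*} & RO(H) \arrow{d}{f^*} \\
& RO(G)
\end{tikzcd} \]
commutes, which is immediate from $\mu \circ f = \eta$. Specializing the $RO(H)$-graded restriction map at $V = \mu^*\alpha$ for $\alpha \in RO(C_2)$ therefore yields
\[ f^* : E_\mu^{\mu^*\alpha}(X) \longrightarrow E_\eta^{f^*\mu^*\alpha}(f^*X) = E_\eta^{\eta^*\alpha}(f^*X), \]
where the equality uses the commuting triangle. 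Re-expressed in the $RO(C_2)$-grading, this is precisely a map $E_\mu^\alpha(X) \to E_\eta^\alpha(f^*X)$, which is the claimed compatibility.

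I do not expect any real obstacle; the substance of the proposition is the functoriality of the two assignments $\eta \mapsto (\eta^* : RO(C_2) \to RO(-))$ and $\eta \mapsto E_\eta$. Any mild subtlety concerns checking that these assignments are strictly functorial on the nose, which follows because both representation pullback and pullback of orthogonal $C_2$-spectra are strictly compatible with composition at the point-set level. The same argument also shows compatibility with suspension and with the cup product structure, should one wish to upgrade the statement to one about $RO(C_2)$-graded ring homomorphisms.
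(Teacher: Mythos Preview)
Your proof is correct and follows essentially the same approach as the paper: both arguments observe that the commuting triangle $f^*\mu^* = \eta^*$ of representation-ring pullbacks, together with the standard $RO(H)$-graded restriction formula $f^*(E_\mu^V) \cong E_\eta^{f^*V}$, immediately forces restriction to preserve the $RO(C_2)$-grading. The paper's version is a one-line formula encoding exactly the computation you spell out in detail.
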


\begin{proof}
    If $f : H \rightarrow G$ is a morphism of augmented groups, then the formula \[ f^* (E_\eta^\alpha(-)) \cong (E_{\eta \circ f})^{f^* \alpha}(-) \] for $\alpha \in RO(G)$ implies that this grading is preserved by restriction along any morphism of augmented groups. 
\end{proof}

\section{Geometric constructions with augmented Lie groups}

We will require geometric constructions analogous to those in \cite[Section 3.2]{Hau22}, but in the setting of augmented Lie groups. We outline their construction here.

\subsection{The induction isomorphism}

We will start with induction isomorphisms. Following \cite[Section 3.2]{Hau22}, we can restrict and induct along homomorphisms of groups. Given a homomorphism of augmented groups, we can therefore restrict and induct along the underlying group homomorphisms (by forgetting the data of the augmentations). In particular, if we have a Real global spectrum $E$, then we obtain induction maps $ind_d$ analogous to those utilized throughout \cite{Hau22}. 

Recall from \cite[Section 3.2]{Hau22} that if $f : G \rightarrow H$ is a group homomorphism and $X$ is a based $G$-space, then induction along $f$ given by 
\begin{equation}\label{induction along f}
    \text{ind}_f(X) = H_+ \wedge_f X.
\end{equation}
It defines a functor $Top^G_*\to Top^H_*$ which is left adjoint to restriction along $f$, given by 
\begin{equation}\label{restriction along f}
    \text{res}_f(X) = f^*X.
\end{equation}
If $E$ is a global spectrum and $X$ is a $G$-space, then the induction homomorphism is the composite 

\[\begin{tikzcd} ind_f: E^*_H(H_+ \wedge_f X) \ar[r,"f^*"] &  E_G^*( f^*(H_+ \wedge_f X))  \ar[rr, "(\beta_X)^*"] & &  E_G^*(X),
\end{tikzcd} \]
where $\beta_X: X \to f^*(H_+ \wedge_f X)$ is the unit of the adjunction. 

Now if $\eta : G \rightarrow C_2$ is an augmented group, by a $\eta$-CW complex we just mean a $G$-CW complex in the usual sense, where $G$ is the underlying Lie group. We define $\eta$-spaces similarly. Let $\eta : G \rightarrow C_2$ and $\epsilon : H \rightarrow C_2$. If $f : G \rightarrow H$ is an augmented group homomorphism, we define induction and restriction along $f$ as in \eqref{induction along f} and \eqref{restriction along f} by forgetting the augmentation of the groups. Now let $E$ be a Real global spectrum and $X$ a based $\eta$-space, we define the induction homomorphism

\[\begin{tikzcd} ind_f: E^{\bigstar}_{\epsilon}(H_+ \wedge_f X) \ar[r,"f^*"] &  E_{\eta}^{\bigstar}( f^*(H_+ \wedge_f X))  \ar[rr, "(\beta_X)^*"] & &  E_{\eta}^{\bigstar}(X).
\end{tikzcd} \]

Since $f^*$ preserves the $RO(C_2)$-grading by Proposition \ref{prop:RO(C_2)-grading-is-well-defined} and $(\beta_X)^*$ preserves it as $\beta_X$ is a map of $G$-spaces, we see that $ind_f$ preserves the $RO(C_2)$-grading as well.

Recall that the kernel of an augmented map $\phi: G \to H$ is simply the kernel $Ker(\phi)$ of the group homomorphism $G \to H$, equipped with canonical augmentation $Ker(\phi) \subset G \to C_2$. The following lemma is adapted from \cite[Proposition 3.3.8]{DHL+23} to our augmented setting.

\begin{lemma}\label{lem induction iso}
     Let $E$ be a Real global spectrum. Let $\alpha : K \rightarrow G$ a surjective map of augmented compact Lie groups. Let $X$ be a based $K$-CW complex on which $Ker(\alpha)$ acts freely in the based sense. Then the induction map\[ \begin{tikzcd} ind_{\alpha} : E^{\bigstar}_{\eta}(G_+ \wedge_{\alpha} X) \ar[r,"\cong"] & E^{\bigstar}_{\eta \circ \alpha}(X)\end{tikzcd}  \]
    is an isomorphism for ${\bigstar}\in RO(C_2)$. 
 \end{lemma}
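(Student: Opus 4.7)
The plan is to reduce, via a standard cellular induction, to the case where $X$ is a free orbit $K/H_+$, and then verify the induction isomorphism there using the global nature of $E$. The free-action hypothesis on $Ker(\alpha)$ restricts the isotropy groups that appear in cells of $X$ to exactly those $H$ for which the induced map $\alpha|_H$ is an isomorphism onto its image, which is what will make the base case clean.

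First, I would observe that both $E^{\bigstar}_{\eta}(G_+\wedge_\alpha (-))$ and $E^{\bigstar}_{\eta\circ\alpha}(-)$ are $RO(C_2)$-graded reduced cohomology theories on the category of pointed $K$-CW complexes with free $Ker(\alpha)$-action, and that $ind_\alpha$ is a natural transformation between them. The left-hand functor inherits exactness because $G_+\wedge_\alpha(-)$ is a left adjoint to $\alpha^*$ on pointed spaces, hence preserves cofiber sequences, wedges, and suspensions; naturality of $ind_\alpha$ comes from the naturality of the unit $\beta_X$. A standard Mayer--Vietoris and Milnor-$\lim^1$ argument along the skeletal filtration then reduces the claim to verifying that $ind_\alpha$ is an isomorphism when $X = K/H_+$ for subgroups $H\subseteq K$ satisfying $H\cap Ker(\alpha) = \{e\}$; these are exactly the orbit types that can appear in pointed $K$-CW complexes with free $Ker(\alpha)$-action.

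For such an $H$, the restricted homomorphism $\alpha|_H : H \to \alpha(H)\subseteq G$ is an isomorphism of augmented compact Lie groups, since $\alpha$ is an augmented map and has trivial kernel on $H$. A direct set-level computation shows
\[ G_+\wedge_\alpha K/H_+ \;\cong\; G/\alpha(H)_+ \]
as pointed $G$-spaces, and under this identification the unit $\beta_{K/H_+}: K/H_+ \to \alpha^*(G/\alpha(H)_+)$ is simply the map induced by $\alpha$ on cosets. The induction map becomes the natural comparison
\[ E^{\bigstar}_{\eta}(G/\alpha(H)_+) \longrightarrow E^{\bigstar}_{\eta\circ\alpha}(K/H_+), \]
and by the global/Wirthm\"uller-type identification for orbits each side is naturally isomorphic to the $RO(C_2)$-graded homotopy groups of $E$ restricted along $\alpha(H)\hookrightarrow G\xrightarrow{\eta} C_2$ and $H\hookrightarrow K\xrightarrow{\eta\circ\alpha} C_2$ respectively. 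Because $\alpha|_H$ is an augmented isomorphism identifying these two composite augmentations, $ind_\alpha$ is the transport along $\alpha|_H$, hence an isomorphism.

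The main obstacle is the base case: making the Wirthm\"uller-type identification of $E^{\bigstar}_{\eta}(G/\alpha(H)_+)$ with equivariant homotopy groups precise in the $C_2$-global setting, and verifying it carries the definition of $ind_\alpha$ (via $\alpha^*$ followed by $\beta^*$) to the transport along $\alpha|_H$. This is a careful but formal diagram chase, parallel to the non-augmented argument in \cite[Proposition 3.3.8]{DHL+23}, with the $RO(C_2)$-grading preserved throughout by Proposition~\ref{prop:RO(C_2)-grading-is-well-defined} and the functoriality of $\eta^*$.
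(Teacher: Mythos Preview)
Your reduction to orbits $K/H_+$ with $H\cap Ker(\alpha)=\{e\}$ is exactly how the paper begins, and your identification $G_+\wedge_\alpha K/H_+\cong G/\alpha(H)_+$ together with the observation that $\alpha|_H$ is an augmented isomorphism is the same key point. Where you diverge is in the treatment of the $RO(C_2)$-grading: you propose to run the Wirthm\"uller-type identification and the accompanying diagram chase directly in $RO(C_2)$-degree, appealing to Proposition~\ref{prop:RO(C_2)-grading-is-well-defined}. The paper instead first quotes \cite[Proposition~3.3.8]{DHL+23} to settle the $\mathbb{Z}$-graded case, and then bootstraps to arbitrary $\bigstar=*+n\sigma$ by a five-lemma argument along the long exact sequence coming from the cofiber sequence $(K/Ker(\eta\circ\alpha))_+\to S^0\to S^\sigma$, i.e.\ multiplication by the Euler class $a_\sigma$, inducting on $n$ in both directions and using that the restricted map $ind_{\alpha|_{Ker(\eta\circ\alpha)}}$ lives over trivially augmented groups where $RO(C_2)$-grading collapses to $\mathbb{Z}$-grading. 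Your route is more uniform and avoids the auxiliary induction on $n$, at the cost of having to redo the DHL+23 diagram chase in the graded setting; the paper's route is more modular, importing the orbit case as a black box and isolating the new content in the $a_\sigma$ five-lemma step. Both are valid.
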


\begin{proof}
    We follow the proof of \cite[Proposition 3.3.8]{DHL+23}. The functor $G_+ \wedge_{\alpha}-$ preserves equivariant homotopy and commute with wedges and mapping cones. So the functor $E^*_{\eta}(G_+\wedge_{\alpha}-)$ from the category of $K$-spaces to graded abelian groups defines a cohomology theory. The induction map
    \[
    ind_{\alpha}: E^*_{\eta}(G_+ \wedge_{\alpha} X) \to E_{\eta \circ \alpha}^*(X)
    \]
    for a natural transformation of cohomology theories, and we only need to prove the claim for $X= K/L$, where $L\subset \hat{K}$ is compact subgroup with $L\cap Ker(\alpha)= \{ e \}$. The restriction
    \[
    \bar{\alpha}=\alpha|_{L}:L\to G
    \]
    is injective. The $G$-map
    \[
    \psi: G_+\wedge_{\bar{\alpha}} L/L_+\to G_+\wedge_{\alpha} K/L_+,\,[g,eL]\mapsto [g,eL]
    \]
    is a homeomorphism. The two commutative diagrams in \cite[Proposition 3.3.8]{DHL+23} also hold in our context after adjusting everything to the based context. Thus $ind_{\alpha}$ is an isomorphism on $\mathbb{Z}$-graded cohomology theories. If $K$ and thus $G$ are trivially augmented, then the $RO(C_2)$-grading degenerates to $\mathbb{Z}$-grading and we have finished the proof.

    Now suppose $K$ and thus $G$ are non-trivially augmented. By construction, the induction map preserves $RO(C_2)$-grading and we have the following commutative diagram with exact rows. Here we use $E_{Ker(\eta)}$ to denote the restriction of $E_\eta$ to $E_{Ker(\eta)}$, and likewise for $E_{Ker(\eta \circ \alpha)}$.
    \[\scriptsize\begin{tikzcd}
	\cdots & {E^{*+n\sigma}_{Ker(\eta)}(i^*_{Ker(\eta)}(G_+\wedge_{\alpha}X))} && {E^{*+n\sigma}_{G}(G_+\wedge_{\alpha}X)} && {E^{*+(n+1)\sigma}_{G}(G_+\wedge_{\alpha}X)} & \cdots \\
	\\
	\cdots & {E^{*+n\sigma}_{Ker(\eta \circ \alpha)}(i^*_{Ker(\eta \circ \alpha)}X)} && {E^{*+n\sigma}_{K}(X)} && {E^{*+(n+1)\sigma}_{K}(X)} & \cdots
	\arrow[from=1-1, to=1-2]
	\arrow["{tr^{G}_{Ker(\eta)}}", from=1-2, to=1-4]
	\arrow["{ind_{\alpha|_{Ker(\eta \circ \alpha)}}}", from=1-2, to=3-2]
	\arrow["{a_{\sigma}}", from=1-4, to=1-6]
	\arrow["{ind_{\alpha}}", from=1-4, to=3-4]
	\arrow[from=1-6, to=1-7]
	\arrow["{ind_{\alpha}}", from=1-6, to=3-6]
	\arrow[from=3-1, to=3-2]
	\arrow["{tr^{K}_{Ker(\eta \circ \alpha)}}", from=3-2, to=3-4]
	\arrow["{a_{\sigma}}", from=3-4, to=3-6]
	\arrow[from=3-6, to=3-7]
\end{tikzcd}\]
Note that the restriction $\alpha|_{Ker(\eta \circ \alpha)}$ is again surjective. Also, $a_{\sigma}$ denotes the Euler class of $\sigma$. The left most map is $ind_{\alpha|_{Ker(\eta \circ \alpha)}}$, since we have the $Ker(\eta)$-homeomorphism
\[
Ker(\eta)_+\wedge_{\alpha|_{Ker(\eta \circ \alpha)}}i^*_{Ker(\eta \circ \alpha)} X \xrightarrow{\cong}i^*_{Ker(\eta)}(G_+\wedge_{\alpha}X), [g,x]\mapsto [g,x].
\]
The left most square commutes since transfers commute with restrictions along surjective maps (this is true for any genuine $G$-spectrum). By induction on $n\geq 0$, we can prove that $ind_{\alpha}$ is an isomorphism for all $n \geq 0$.

Using the following commutative diagram with exact rows
\[\scriptsize\begin{tikzcd}
	\cdots & {E^{*+n\sigma}_{Ker(\eta)}(i^*_{Ker(\eta)}(G_+\wedge_{\alpha}X))} && {E^{*+n\sigma}_{G}(G_+\wedge_{\alpha}X)} && {E^{*+(n-1)\sigma}_{G}(G_+\wedge_{\alpha}X)} & \cdots \\
	\\
	\cdots & {E^{*+n\sigma}_{Ker(\eta \circ \alpha)}(i^*_{Ker(\eta \circ \alpha)}X)} && {E^{*+n\sigma}_{K}(X)} && {E^{*+(n-1)\sigma}_{K}(X)} & \cdots
	\arrow[from=1-2, to=1-1]
	\arrow["{ind_{\alpha|_{Ker(\eta \circ \alpha)}}}", from=1-2, to=3-2]
	\arrow["{res^G_{Ker(\eta)}}"', from=1-4, to=1-2]
	\arrow["{ind_{\alpha}}", from=1-4, to=3-4]
	\arrow["{a_{\sigma}}"', from=1-6, to=1-4]
	\arrow["{ind_{\alpha}}", from=1-6, to=3-6]
	\arrow[from=1-7, to=1-6]
	\arrow[from=3-2, to=3-1]
	\arrow["{res^{K}_{Ker(\eta \circ \alpha)}}"', from=3-4, to=3-2]
	\arrow["{a_{\sigma}}"', from=3-6, to=3-4]
	\arrow[from=3-7, to=3-6]
\end{tikzcd}\]
and inducting on $n\leq 0$ proves the conclusion for $n\leq 0$.
\end{proof}

As a corollary, we have the following

\begin{corollary}
\label{corollary HauRemark3.3}
    Let $E$ be a Real global spectrum. Let $\hat{K} = K \rtimes C_2$, $G$ any augmented compact Lie group, $X$ a based $G$-CW space and $p : G \times_{C_2} \hat{K} \rightarrow G$ the projection. Then $K = ker(p)$ acts freely on $(G \times_{C_2} \hat{K})_+ \wedge_{G} X$, and consequently there is an induction isomorphism \[ E^{\bigstar}_{G \times_{C_2} \hat{K}}((G \times_{C_2} \hat{K})_+ \wedge_{G} X) \cong E^{{\bigstar}}_{\eta}(((G \times_{C_2} \hat{K})_+ \wedge_{G} X)/K) \cong E_{\eta}^{\bigstar}(X) \] determined by $p$. In the space $(G\times_{C_2}\hat{K})_+\wedge_{G}X$, we use the inclusion $G\cong G\times_{C_2}C_2\hookrightarrow G\times_{C_2}\hat{K}$ to view $G$ as a subgroup of the later. This isomorphism agrees with the induction isomorphism determined by $G\cong G\times_{C_2}C_2 \hookrightarrow G \times_{C_2} \hat{K}$ using the canonical inclusion $C_2\to \hat{K}$. Here ${\bigstar}\in RO(C_2)$.
\end{corollary}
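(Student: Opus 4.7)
The plan is to verify three things in sequence: (i) that $K = \mathrm{Ker}(p)$ acts freely on $Y := (G \times_{C_2} \hat{K})_+ \wedge_G X$; (ii) that Lemma~\ref{lem induction iso} applied along $p$ produces the claimed composite isomorphism $E^{\bigstar}_{G \times_{C_2} \hat{K}}(Y) \cong E^{\bigstar}_\eta(Y/K) \cong E^{\bigstar}_\eta(X)$; and (iii) that this composite agrees with the induction homomorphism attached to the inclusion $\iota: G \cong G \times_{C_2} C_2 \hookrightarrow G \times_{C_2} \hat{K}$.

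For (i), the subgroup $K \subset G \times_{C_2} \hat{K}$ corresponds to $\{(e,(k,1)) : k \in K\}$, and its right multiplication commutes with that of $\iota(G)$ because $G$ and $\hat{K}$ occupy distinct factors of the fiber product. The action therefore descends to $Y$, and freeness away from the basepoint reduces to $K \cap \iota(G) = \{e\}$, which is immediate by comparing coordinates.

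For (ii), Lemma~\ref{lem induction iso} applied with $\alpha = p$ and base space $Y$ provides the isomorphism $E^{\bigstar}_{G \times_{C_2} \hat{K}}(Y) \cong E^{\bigstar}_\eta(G_+ \wedge_p Y)$. Since $K$ acts freely on $Y$ there is a standard identification $G_+ \wedge_p Y \cong Y/K$ as $G$-spaces. Finally, because the $K$-action on $Y$ only affects the first smash factor and commutes with the $G$-action over which we smash, one can quotient the first factor first:
$$Y/K \;\cong\; \bigl((G \times_{C_2} \hat{K})/K\bigr)_+ \wedge_G X \;\cong\; G_+ \wedge_G X \;\cong\; X,$$
where the middle isomorphism uses $p: (G \times_{C_2} \hat{K})/K \xrightarrow{\sim} G$.

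For (iii), the induction along $\iota$, in the sense of the formula directly following Lemma~\ref{lem induction iso}, is $(\beta_X)^* \circ \iota^*$ with $\beta_X: X \to \iota^*Y$ given by $x \mapsto [e,x]$. Both isomorphisms factor through pullback along the same canonical $G$-equivariant map $X \to Y$, $x \mapsto [e,x]$, so their agreement reduces to a diagram chase exploiting the identity $p \circ \iota = \mathrm{id}_G$ and naturality of the adjunction units for restriction along $p$ and along $\iota$. I expect this final comparison step, rather than any part of (i)-(ii), to be the main source of bookkeeping in the argument.
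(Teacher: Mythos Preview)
Your proposal is correct and follows essentially the same route as the paper: apply Lemma~\ref{lem induction iso} along the surjection $p$, then identify $G_+ \wedge_p Y \cong Y/K \cong X$ via the explicit homeomorphisms. The paper's own proof is in fact terser than yours---it records only steps (i)--(ii) and leaves the comparison (iii) with $\mathrm{ind}_\iota$ unaddressed, so your outline of that final bookkeeping via $p \circ \iota = \mathrm{id}_G$ goes slightly beyond what the paper writes out. One small quibble: in (i) you justify that the $K$-action descends to $Y$ by saying ``right multiplication commutes with that of $\iota(G)$ because $G$ and $\hat{K}$ occupy distinct factors''; this is unnecessary, since the $K$-action on $Y$ is simply the restriction of the ambient left $(G \times_{C_2} \hat{K})$-action and descends automatically---the real content is the freeness, which you correctly reduce to $K \cap \iota(G) = \{e\}$ (using normality of $K = \ker p$).
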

\begin{proof}
The first isomorphism comes from the induction isomorphism along the projection $p : G \times_{C_2} \hat{K} \rightarrow G$ from Lemma \ref{lem induction iso}. Note that the conditions of this lemma is satisfied: By our assumption on $\hat{K}$, we have that the projection $p : G \times_{C_2} \hat{K} \to G$ is split surjective, where we get the splitting by applying $G\times_{C_2}-$ to the split inclusion $C_2\to \hat{K}$. 

The second isomorphism comes from the $G$-homeomorphisms 
\[
G\wedge_p((G \times_{C_2} \hat{K})_+ \wedge_{G} X))\cong ((G \times_{C_2} \hat{K})_+ \wedge_{G} X))/K\cong X.
\]   
\end{proof}

Since we will only require the case that $\hat{K}$ is an augmented torus $\widehat{\mathbb{T}}$, the condition on $\hat{K}$ is not overly restrictive. 

\subsection{A model for homotopy orbits of augmented groups}\label{Section homotopy orbit construction}

In this section we define the notion of a universal $K$-space in $G$-spaces, for augmented groups $K$ and $G$. Then we define the notion of $(K,G)$-homotopy orbits using the universal spaces. Note that these definitions are different than the ones given in \cite{Hau22}. This is because we want the unit sphere $S(\mathcal{U}_{\eta})$ in a complete Real $\eta$-universe $\mathcal{U}_{\eta}$ to be a model for the universal $\widehat{\mathbb{T}}$-space in $\eta$-spaces $E_{\eta} \widehat{\mathbb{T}}$.

\begin{lemma}
\label{GraphSubgroup}
    Let $\eta : G \rightarrow C_2$ and $\epsilon : K \rightarrow C_2$ be two augmented groups. A group homomorphism $\alpha : G \rightarrow K$ respects the augmentation if and only if the associated graph subgroup $H \subset G \times K$ is contained in the fiber product $G \times_{C_2} K$.
\end{lemma}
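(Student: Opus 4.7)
The plan is to unpack both conditions in terms of elements and observe that they are literally the same statement. Recall that the graph subgroup associated to $\alpha : G \to K$ is
\[ H = \{ (g, \alpha(g)) : g \in G \} \subset G \times K, \]
while the fiber product is
\[ G \times_{C_2} K = \{ (g,k) \in G \times K : \eta(g) = \epsilon(k) \}. \]
With these two descriptions in hand, the proof reduces to comparing the defining conditions on elements.

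For the forward direction, I would assume that $\alpha$ respects the augmentation, meaning $\epsilon \circ \alpha = \eta$. Then for every $g \in G$ one has $\epsilon(\alpha(g)) = \eta(g)$, so the pair $(g, \alpha(g))$ lies in $G \times_{C_2} K$. Since $H$ is the set of all such pairs, this gives $H \subset G \times_{C_2} K$.

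Conversely, assuming $H \subset G \times_{C_2} K$, every element $(g, \alpha(g)) \in H$ must satisfy $\eta(g) = \epsilon(\alpha(g))$. Since this must hold for all $g \in G$, we conclude $\eta = \epsilon \circ \alpha$, which is precisely the condition that $\alpha$ is a morphism of augmented Lie groups.

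There is no real obstacle here: the statement is a tautological reformulation of what it means to respect the augmentation, and the role of the lemma is essentially notational, providing the translation that will be used later to identify augmented homomorphisms with certain graph subgroups inside the fiber product $G \times_{C_2} K$.
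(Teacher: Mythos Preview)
Your proof is correct and is essentially identical to the paper's own proof: both unwind the definitions of the graph subgroup and the fiber product set-theoretically and observe that the condition $\eta(g) = \epsilon(\alpha(g))$ for all $g$ is exactly what it means for $\alpha$ to respect the augmentation. The only difference is that you spell out both directions explicitly, whereas the paper leaves the final verification implicit.
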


\begin{proof}
    Given $\alpha$, the associated graph subgroup $H$ may be described as the set of pairs $(g,\alpha(g)) \in G \times K$. Respecting the augmentation is equivalent to the statement that $g$ and $\alpha(g)$ map to the same element of $C_2$. The fiber product is described set-theoretically as the collection of elements $(g,k)$ which map to the same element of $C_2$, so the result follows.
\end{proof}

\begin{definition}
Let $\eta : G \rightarrow C_2$ and $\epsilon : K \rightarrow C_2$ be augmented groups. Then a universal $K$-space in $G$-spaces is a $G \times_{C_2} K$ space $E_{G} K$ such that the $H$-fixed points $(E_{G} K)^{H}$ are contractible for $H \subset G \times_{C_2} K$ the graph subgroup associated to an augmented homomorphism $\alpha : L \rightarrow K$ from an subgroup $L$ (given the canonical augmentation $L \subset G \to C_2$) of $G$, and empty otherwise. The condition on $H$ is equivalent to $H \cap (\{1\}\times Ker(\epsilon))=\{ 1\}\times \{1\}$.   
\end{definition}

Given such a space $E_G K$, we can form a functor, called $(K,G)$-homotopy orbits. This functor takes in a $G \times_{C_2} K$-space $X$ and produces the $G$-space $X_{h_G K} := E_G K_+ \wedge_{Ker(\epsilon)} X$. Here $Ker(\epsilon)$ is viewed as a subgroup of $G \times_{C_2} K$ by the inclusion as the kernel of the projection to $G$. Note that there is a slight conflict of notation with that of \cite[Section 3.3]{Hau22}. Specifically, if $G$ and $K$ are regarded just as compact Lie groups, then Hausmann constructs $(K, G)$-homotopy orbits, which are distinct from ours unless $G$ and $K$ are given the trivial augmentations. We will only use our construction of homotopy orbits, so there should be little chance for confusion.

In the following, we provide an explicit model for the universal $\widehat{\mathbb{T}}$-space in $G$-spaces, $E_G \widehat{\mathbb{T}}$.

\begin{lemma}
    Let $\eta : G \rightarrow C_2$ be a quasi-abelian group. Then the unit sphere $S(\mathcal{U}_\eta)$ in a complete Real $\eta$-universe $\mathcal{U}_\eta$ is a model for the universal $\widehat{\mathbb{T}}$-space in $G$-spaces $E_G \widehat{\mathbb{T}}$, where $\widehat{\mathbb{T}}$ acts on $\mathcal{U}_\eta$ by combining the action of $\mathbb{T}$ by scalar multiplication with the action of $C_2$ describing the Real structure.
\end{lemma}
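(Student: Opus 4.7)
The plan is to equip $S(\mathcal{U}_\eta)$ with a natural $G \times_{C_2} \widehat{\mathbb{T}}$-action combining the Real $\eta$-action of $G$ with the tautological action of $\widehat{\mathbb{T}}$ on each Real character line ($\mathbb{T}$ by scalar multiplication, the $C_2$-part by the specified Real structure), and then to verify the two fixed-point conditions. The quasi-abelian hypothesis (Proposition \ref{prop:quasi-abelian}) lets us decompose $\mathcal{U}_\eta = \bigoplus_\beta \mathbb{C}_\beta$ into one-dimensional Real $\eta$-character lines, which is the main technical handle. A preliminary task is to check that while the $G$- and $\widehat{\mathbb{T}}$-actions do not commute on all of $G \times \widehat{\mathbb{T}}$, the augmentation constraint $\eta(g) = p(t)$ defining the fiber product is precisely what is needed for the two actions to combine into a well-defined action of $G \times_{C_2} \widehat{\mathbb{T}}$ on each line.

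The ``empty'' part of the fixed-point condition is straightforward. If $H \subset G \times_{C_2} \widehat{\mathbb{T}}$ fails to be a graph subgroup, then by the equivalent condition given in the definition of $E_G \widehat{\mathbb{T}}$, $H$ contains some $(1, w)$ with $w \in Ker(p) = \mathbb{T}$ nontrivial; this element acts on each $\mathbb{C}_\beta$ by nontrivial scalar multiplication and so fixes only $0$. Hence $\mathcal{U}_\eta^H = 0$ and $S(\mathcal{U}_\eta)^H = \emptyset$. In the contractible case, let $H = \{(l, \alpha(l)) : l \in L\}$ be the graph of an augmented homomorphism $\alpha : L \to \widehat{\mathbb{T}}$. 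On each $\mathbb{C}_\beta$, the combined action of $(l, \alpha(l))$ always lies in $\mathbb{T}$ by the augmentation constraint and therefore fixes a nonzero vector iff it is the identity, which is equivalent to $\beta(l) = \alpha(l)$; consequently $\mathcal{U}_\eta^H$ contains $\bigoplus_{\beta|_L = \alpha} \mathbb{C}_\beta$, and the proof reduces to showing this sum is infinite-dimensional (any infinite-dimensional real subspace of a real Hilbert space has contractible unit sphere).

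The main obstacle is establishing this infinite-dimensionality, which I would carry out in two steps. First, extend $\alpha$ to a single Real $\eta$-character $\beta_0$ of $G$: by quasi-abelianness (Proposition \ref{prop:quasi-abelian}) both $L$ and $G$ take one of the forms $A$ or $A \rtimes C_2$ for compact abelian $A$, so the underlying complex character $\alpha|_{Ker(\eta) \cap L}$ extends from the closed subgroup $Ker(\eta) \cap L \subset Ker(\eta)$ to all of $Ker(\eta)$ by Pontryagin duality for compact abelian Lie groups, and this extension then lifts to a Real character $\beta_0$ of $G$ via an explicit semi-direct product formula that matches $\alpha$ on the chosen $C_2$-generator (the homomorphism check uses only that $\gamma$ acts on $\mathbb{T}$ by inversion). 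Second, by completeness of $\mathcal{U}_\eta$ (Lemma \ref{lem:existence-of-complete-Real-universes}), $\mathbb{C}_{\beta_0}^{\oplus k}$ embeds into $\mathcal{U}_\eta$ for every $k$, so $\beta_0$ appears with infinite multiplicity in the isotypic decomposition. This yields the required infinite-dimensional fixed subspace and completes the proof.
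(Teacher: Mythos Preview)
There is a genuine gap in your analysis of the contractible case. Your claim that ``the combined action of $(l,\alpha(l))$ always lies in $\mathbb{T}$'' fails whenever $\eta(l)=\gamma$. Writing $\beta(l)=(\lambda,\gamma)$ and $\alpha(l)=(\mu,\gamma)$ and decomposing $(l,\alpha(l))=(l,(1,\gamma))\cdot(e,(\mu^{-1},1))$ in $G\times_{C_2}\widehat{\mathbb{T}}$, one computes that $(l,\alpha(l))$ acts on $\mathbb{C}_\beta$ by $z\mapsto \lambda\mu\,\bar z$, which is conjugate-linear, not scalar. (Heuristically: both $l$ and $\alpha(l)$ act conjugate-linearly, and a scalar followed by a conjugate-linear map is still conjugate-linear.) Such a map \emph{always} fixes the real line $\mathbb{R}\sqrt{\lambda\mu}$, so your criterion ``fixes a nonzero vector iff it is the identity'' breaks down. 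Even on the even part $L\cap Ker(\eta)$ there is a sign slip: the action is multiplication by $\beta(l)\alpha(l)$, so the correct condition for $(\mathbb{C}_\beta)^H\neq 0$ is $\beta|_{L\cap Ker(\eta)}=(\alpha|_{L\cap Ker(\eta)})^{-1}$, not $\beta|_L=\alpha$. When $L\not\subset Ker(\eta)$ the fixed set in each contributing $\mathbb{C}_\beta$ is only a real line, not the whole complex line.

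Your overall strategy survives these corrections: extend $\alpha^{-1}|_{L\cap Ker(\eta)}$ (rather than $\alpha$) to a Real character $\beta_0$ of $G$, note that $\beta_0$ occurs with infinite multiplicity in $\mathcal{U}_\eta$, and observe that each copy of $\mathbb{C}_{\beta_0}$ contributes at least a real line to $\mathcal{U}_\eta^H$, which is enough for the unit sphere to be contractible. The paper's proof is organized around exactly this subtlety: it makes a case distinction on whether $H$ is trivially or nontrivially augmented, and in the nontrivial case identifies the fixed set as the unit sphere in a \emph{real} subspace $\mathcal{V}^{C_2}$ rather than in the complex span $\mathcal{V}$ of the relevant lines.
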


\begin{proof}
    We must check that for an augmented subgroup $H \subset G \times_{C_2} \widehat{\mathbb{T}}$, the $H$-fixed points $S(\mathcal{U}_\eta)^H$ are contractible if $H$ is the graph subgroup associated to a homomorphism from an augmented subgroup of $G$, and empty otherwise. Note that this condition on $H$ is equivalent to the condition that $H$ is an augmented subgroup of $G \times_{C_2} \widehat{\mathbb{T}}$ such that $H \cap (\{ 1 \} \times {\mathbb{T}}) = \{ 1 \}\times \{ 1 \}$ by Lemma \ref{GraphSubgroup}.

    Choose a complete flag for $\mathcal{U}_\eta$. This specifies a basis over $\R$ for which $Ker(\eta)$ and $\mathbb{T}$ act by $2 \times 2$ block diagonal matrices, and $C_2$ acts diagonally (heuristically, our basis over $\R$ comes from the imaginary and real parts of a basis over $\C$). If $H \cap (\{ 1 \} \times {\mathbb{T}}) \neq \{ 1 \}\times \{ 1 \}$, then it immediately follows that there are no $H$-fixed points.

    Conversely, assume $H$ is the graph subgroup associated to some $\alpha : K \rightarrow \widehat{\mathbb{T}}$. Let $\beta_i$ run through the nonempty set of all irreducible Real $\eta$-representations $G \rightarrow \hat{\mathbb{T}}$ such that $\alpha(k) \cdot \beta_i(k) = 1$, and let $\mathcal{V}$ be the subrepresentation of $\mathcal{U}_\eta$ spanned by all irreducible representations $\beta_i$. Note that $C_2 \subset \mathbb{T}$ acts on $\mathcal{V}$ by complex conjugation. If $H$ has the trivial augmentation, then the $H$-fixed points form the unit sphere in $\mathcal{V}$, which is contractible. Otherwise the $H$-fixed points form the unit sphere in $\mathcal{V}^{C_2}$, which is again contractible.
\end{proof}

For a Real global spectrum $E$, augmented groups $\eta : G \rightarrow C_2$ and $\hat{K} = K\rtimes C_2$, and a $ G \times_{C_2} \hat{K}$-space $X$,
we can form a map \[ h_{G,\hat{K}}(X) : E^{\bigstar}_{G \times_{C_2} \hat{K}}(X) \rightarrow E^{\bigstar}_\eta ( X_{h_G \hat{K}} ) \] by composing \[ (E_G \hat{K} \rightarrow *)^{\bigstar} : E^{\bigstar}_{G \times_{C_2} \hat{K}}(X) \rightarrow E^{\bigstar}_{G \times_{C_2} \hat{K}} ((E_G \hat{K})_+ \wedge X) \] with the inverse of the induction isomorphism \[ E^{\bigstar}_{G \times_{C_2} \hat{K}} ((E_G \hat{K})_+ \wedge X) \rightarrow E^{\bigstar}_\eta(( E_G \hat{K})_+ \wedge_K X) \]
Now suppose we have a map of augmented groups $\alpha : G \rightarrow \hat{K}$ with corresponding graph subgroup $H$. If we choose a point $*_{\alpha}$ in the contractible space $(E_G \hat{K})^H$, then $x \mapsto *_\alpha \wedge x$ specifies a $H$-map $(Id_G,\alpha)^*(X) \rightarrow X_{h_G \hat{K}}$ sending $x$ to the class $[*_\alpha,x]$.

The following theorem is adapted from \cite[Lemma 3.4]{Hau22} to the augmented setting.

\begin{lemma}
\label{HauLemma3.4}
    Assume $\hat{K} = K \rtimes C_2$. The composition \[ E^{\bigstar}_{G \times_{C_2} \hat{K}}(X) \xrightarrow{h_{G,\hat{K}}(X)} E^{\bigstar}_\eta (X_{h_G \hat{K}}) \rightarrow E^{\bigstar}_\eta ((Id_G,\alpha)^*(X)) \] equals the restriction map $(Id_G,\alpha)$. Here $\bigstar=RO(C_2)$.
\end{lemma}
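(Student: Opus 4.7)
The plan is to unravel the composition defining $h_{G,\hat{K}}(X)$ and reduce it to a space-level identity plus a single compatibility between the induction isomorphism and restriction to the graph subgroup $H \subset G \times_{C_2} \hat{K}$ of $(Id_G, \alpha)$.

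First I would introduce the $H$-equivariant map $\tilde{f}: (Id_G,\alpha)^* X \to (E_G \hat{K})_+ \wedge X$ given by $x \mapsto *_\alpha \wedge x$, and write the quotient by $K$ as $q : (E_G \hat{K})_+ \wedge X \to X_{h_G \hat{K}}$, so that $\bar{f} = q \circ \tilde{f}$, where $\bar{f}$ is the $G$-equivariant map inducing the second arrow of the composition. The map $c^*$ is pullback along the $(G \times_{C_2} \hat{K})$-equivariant projection $proj: (E_G \hat{K})_+ \wedge X \to X$. The only nontrivial space-level input is the elementary identity $proj \circ \tilde{f} = id_{(Id_G,\alpha)^*X}$, which holds because $\tilde{f}$ inserts the factor $*_\alpha$ that $proj$ then collapses.

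Next I would exploit the description $ind_p = \beta^* \circ p^*$ of the induction isomorphism recalled before Lemma \ref{lem induction iso}, where $p : G \times_{C_2} \hat{K} \to G$ is the projection and $\beta$ is the adjunction unit, which on underlying spaces is the quotient $y \mapsto [y]$. The key intermediate identity to establish is
\[
\bar{f}^* \circ ind_p^{-1} = \tilde{f}^* \circ res_H^{G \times_{C_2} \hat{K}},
\]
under the identification $E_H^{\bigstar} = E_\eta^{\bigstar}$ via the graph isomorphism $(Id_G, \alpha) : G \xrightarrow{\cong} H$. Writing any class as $\xi = \beta^* p^* \gamma$ for $\gamma = ind_p^{-1}(\xi)$, this identity reduces to two observations: (i) $p|_H$ is the inverse of $(Id_G,\alpha)$, so corresponds to the identity and $(p|_H)^* \gamma = \gamma$ under the identification; and (ii) on spaces $\beta \circ \tilde{f} = \bar{f}$ under the canonical isomorphism $((E_G \hat{K})_+ \wedge X)/K \cong X_{h_G \hat{K}}$. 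Combining these via the naturality of restriction and pullback yields the identity.

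Finally, I would assemble the pieces using naturality of restriction along $proj$ (so that $res_H \circ proj^* = (proj|_H)^* \circ res_H$):
\begin{align*}
\bar{f}^* \circ ind_p^{-1} \circ c^*
&= \tilde{f}^* \circ res_H \circ proj^* \\
&= (proj|_H \circ \tilde{f})^* \circ res_H \\
&= res_H = (Id_G, \alpha)^*.
\end{align*}
The main obstacle is verifying the intermediate identity $\bar{f}^* \circ ind_p^{-1} = \tilde{f}^* \circ res_H^{G \times_{C_2} \hat{K}}$, which requires a careful understanding of how the induction adjunction unit $\beta$ interacts with restriction to the graph subgroup; once this compatibility is in place, the rest is the trivial space-level identity $proj \circ \tilde{f} = id$ together with formal naturality.
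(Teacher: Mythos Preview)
Your argument is correct, and it uses the same essential ingredients as the paper: the $H$-equivariant insertion map $\tilde{f}=\ast_\alpha\wedge(-)$, the isomorphism $p_1|_H:H\cong G$, and the unravelling of $h_{G,\hat{K}}$ into the projection map and the induction isomorphism. The difference lies in how the bookkeeping is organized.

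The paper does not restrict down to $H$; instead it inducts $\tilde{f}$ up to a $(G\times_{C_2}\hat{K})$-equivariant map $(G\times_{C_2}\hat{K})_+\wedge_H X\to (E_G\hat{K})_+\wedge X$ and builds a commutative diagram with \emph{two} induction isomorphisms (one for $(E_G\hat{K})_+\wedge X$ and one for the induced space). The bottom row is then identified with restriction along $(Id_G,\alpha)$ by invoking Corollary~\ref{corollary HauRemark3.3} together with \cite[Lemma~3.2]{Hau22}. Your route instead plugs the formula $ind_p=\beta^*\circ p^*$ directly into the composite and restricts to $H$, which reduces everything to the two elementary identities $\beta\circ\tilde{f}=\bar{f}$ and $proj\circ\tilde{f}=id$. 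This avoids the appeal to Corollary~\ref{corollary HauRemark3.3} and the cited lemma of Hausmann, at the cost of carrying the identification $E_H^{\bigstar}\cong E_\eta^{\bigstar}$ along explicitly. The two arguments are adjoint to one another (inducing up versus restricting down) and are of comparable length; yours is marginally more self-contained.
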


\begin{proof}
    Following Hausmann, we consider the graph subgroup $i : H \hookrightarrow G \times_{C_2} \hat{K}$ associated to $\alpha : G \rightarrow \hat{K}$. This inclusion, followed by the projection $p_1: G \times_{C_2} \hat{K} \to G$ specifies an isomorphism $H \cong G$ of augmented groups. The map $*_\alpha \wedge - : X \rightarrow E_G \hat{K}_+ \wedge X$ is equivariant with respect to the action of $H$, induces up to a $G \times_{C_2} \hat{K}$-equivariant map \[ (G \times_{C_2} \hat{K})_+ \wedge_H X \rightarrow E_G \hat{K}_+ \wedge X .\]

    Now $K$ acts freely on the domain of the map above, and the corresponding quotient $((G \times_{C_2} \hat{K})_+ \wedge_H X)/K$ is just $(Id_G,\alpha)^*(X)$ (one can see this by noticing $i(H)$ and $K$ generates $G \times_{C_2} \hat{K}$). Now we can form the diagram 
    \[ \begin{tikzcd}
    E^{\bigstar}_{G \times_{C_2} \hat{K}}(X) \arrow{r}{(E_G \hat{K}\to *)^{\bigstar}} \arrow{dr} & E^{\bigstar}_{G \times_{C_2} \hat{K}}(E_G \hat{K}_+ \wedge X) \arrow{d}{(*_\alpha \wedge -)^*} & E^{\bigstar}_\eta (E_G \hat{K}_+ \wedge_K X) \arrow{l}{ind_\alpha} \arrow{d}{(*_\alpha \wedge -)^*} \\
    {} & E^{\bigstar}_{G \times_{C_2} \hat{K}}((G \times_{C_2} \hat{K})_+ \wedge_{H} X) & E^{\bigstar}_\eta((Id_G,\alpha)^*(X)) \arrow{l}{ind_\alpha}
    \end{tikzcd} \]
    where the left diagonal map is induced by the counit of the forgetful-induction adjunction. Going from the top left to the bottom right is the composite which we wish to show is restriction along $(Id_G,\alpha)$, so it suffices to check that the bottom composition is this restriction. This follows directly from Corollary \ref{corollary HauRemark3.3} and \cite[Lemma 3.2]{Hau22}.
\end{proof}

\section{Real orientations of global spectra and equivariant spectra}\label{Section 4 Real orientations of global spectra and equivariant spectra}

In this section, we generalize definitions from \cite{CGK00} and \cite{Hau22} and define Real orientations of equivariant spectra and Real global spectra. We show that a Real orientation of a Real global spectrum $E$ induces a Real $\eta$-orientation of the $G$-spectra $E_\eta$ for all compact quasi-abelian Lie groups $\eta : G \rightarrow C_2$. 

We then define the concept of a Real global group law and show that the $RO(C_2)$-graded homotopy groups $\pi_{\bigstar}^{\widehat{\mathbb{T}^n}}(E_{\widehat{\mathbb{T}^n}})$ assemble to a Real global group law. We will eventually apply all of the results of this section to the Real global spectrum $\mathbf{MR}$, constructed in the next section.

\subsection{Real $\eta$-orientations and Real global orientations}

Before defining our Real $\eta$-orientations, we need to define a Real analogue of complex stability as in \cite{CGK02}, which we call {\it Real stability}. 

\begin{definition}
    Let $\eta : G \rightarrow C_2$ be any augmented compact Lie group. If $E_\eta$ is a homotopy commutative $G$-ring spectrum, then a Real stable structure on $E_\eta$ is a choice of isomorphism $E^{V}_\eta(X) \cong E^{(dim_\C V) \rho}_\eta(X)$ for each Real representation $V$ of $\eta$ and is natural in the pointed $G$-spaces $X$. We require these isomorphisms to be transitive and given by a multiplication by a unit in $E_\eta^{V-(dim_{\C}V)\rho}(S^0)$. If we have chosen a Real stable structure on $E_\eta$, we say that $E_\eta$ is Real stable.
\end{definition}

\begin{remark}
    A Real stable structure on a homotopy $G$-ring spectrum $E_\eta$ determines a complex stable structure on $E_{Ker(\eta)} := Res^G_{Ker(\eta)} E_\eta$ as follows. Restriction is a ring homomorphism, a unit in $E_\eta^{V-(dim_{\C}V)\rho}(S^0)$ restricts to a unit in $E_{Ker(\eta)}^{V-(dim_{\C}V)}(S^0)$.
\end{remark}

For an augmented compact Lie group $\eta : G \rightarrow C_2$, let $\rho$ be the Real representation of $\eta$ obtained by pulling back the complex conjugation action of $C_2$ on $\mathbb{C}$ along the augmentation $G \to C_2$. If $\alpha$ is an irreducible Real representation of $\eta$, then there is an isomorphism of $G$-spaces $\C P(\rho \oplus \alpha) \cong S^{\alpha}$ where $[1:v]$ corresponds to $v\in\alpha$ and $[0:v]$ correspond to $\infty$.

\begin{definition}\label{Definition Real orientation}
    If $E_\eta$ is a homotopy commutative $G$-ring spectrum, then a Real $\eta$-orientation of $E_\eta$ is a class $y(\rho) \in E_\eta^\rho(\C P(\mathcal{U}_\eta))$ which restricts to a $RO(G)$-graded unit of \[ E_\eta^\rho( \C P(\rho \oplus \alpha) ) \cong E_\eta^\rho(S^\alpha)\]
    where $\alpha$ ranges through the non-trivial irreducible Real $\eta$-representations, and restricts to the multiplicative identity when $\alpha = \rho$.
\end{definition}

\begin{remark}
\label{rem:Real-ori-determines-Real-stable-structure}
    A Real $\eta$-orientation of $E_\eta$ determines a Real stable structure on $E_\eta$.
\end{remark}

\begin{proposition}
    Let $\hat{G} = G \rtimes C_2$ for $G$ compact abelian Lie, and $E_\eta$ a homotopy commutative ring $\hat{G}$-spectrum. Then a Real $\eta$-orientation $y(\rho)\in E_\eta^\rho(\C P(\mathcal{U}_\eta))$ restricts to a complex orientation of $y(\epsilon)\in E_{G}^2(\C P(\mathcal{U}_G))$. Here $\epsilon$ is the one-dimensional trivial complex $G$-representation.
\end{proposition}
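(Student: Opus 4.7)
The plan is to \emph{define} $y(\epsilon)$ as the restriction of $y(\rho)$ along $\mathrm{Res}^{\hat{G}}_G$ and then verify the axioms of a complex orientation term by term. The starting observation is that since $G = \mathrm{Ker}(\eta)$, every element of $G$ acts $\mathbb{C}$-linearly on any Real $\hat{G}$-representation, so restriction along $G \subset \hat{G}$ really does send Real $\hat{G}$-representations to complex $G$-representations. Combined with Lemma \ref{lem:RR-RU-identification}, which guarantees that every complex $G$-representation lifts to a Real $\hat{G}$-representation, the restriction of a complete Real $\hat{G}$-universe $\mathcal{U}_\eta$ is a complete complex $G$-universe, which we identify with $\mathcal{U}_G$. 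In particular $\mathrm{Res}^{\hat{G}}_G \mathbb{C}P(\mathcal{U}_\eta) = \mathbb{C}P(\mathcal{U}_G)$ as $G$-spaces, and since $G$ lies in the kernel of the augmentation the Real representation $\rho$ restricts to the trivial one-dimensional complex $G$-representation $\epsilon$. Hence $y(\epsilon) := \mathrm{res}(y(\rho))$ naturally lives in $E_G^{\rho|_G}(\mathbb{C}P(\mathcal{U}_G)) = E_G^{2}(\mathbb{C}P(\mathcal{U}_G))$.

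For the unit condition, given any non-trivial irreducible complex $G$-representation $\alpha$, lift it to a Real $\hat{G}$-representation $\hat{\alpha}$ via Lemma \ref{lem:RR-RU-identification}. The natural $\hat{G}$-equivariant isomorphism $\mathbb{C}P(\rho \oplus \hat{\alpha}) \cong S^{\hat{\alpha}}$ discussed just before Definition \ref{Definition Real orientation} restricts to the analogous $G$-equivariant isomorphism $\mathbb{C}P(\epsilon \oplus \alpha) \cong S^\alpha$, since applying restriction to each side commutes with the underlying point-set constructions. Because $\mathrm{Res}^{\hat{G}}_G$ is a ring homomorphism, the $RO(\hat{G})$-graded unit in $E_\eta^\rho(S^{\hat{\alpha}})$ prescribed by the Real orientation is sent to an $RO(G)$-graded unit in $E_G^{2}(S^\alpha)$, which is precisely the unit axiom of a complex orientation on $\mathbb{C}P(\epsilon \oplus \alpha)$. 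The normalization axiom follows identically by taking $\hat{\alpha} = \rho$: then $\alpha = \epsilon$, and the condition that $y(\rho)$ restricts to $1$ on $S^\rho$ descends under the unital ring map to $y(\epsilon)$ restricting to $1$ on $S^\epsilon$.

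The main obstacle is not conceptual but notational: one must carefully track the induced map $\mathrm{Res}^{\hat{G}}_G : RO(\hat{G}) \to RO(G)$, identify the $2$ in $E_G^{2}$ with the underlying real dimension of $\rho|_G$, and confirm that the isomorphism of projective spaces above really is defined entry-by-entry on the underlying complex vector spaces so that it behaves well under restriction. Once these identifications are in place, no deeper input beyond Lemma \ref{lem:RR-RU-identification} is required.
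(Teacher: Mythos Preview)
Your proposal is correct and follows essentially the same approach as the paper's proof: both rely on the three facts that $\mathrm{Res}^{\hat G}_G(\rho)=\epsilon$, that restriction is a ring map (so sends units to units and $1$ to $1$), and that every irreducible complex $G$-representation lifts to an irreducible Real $\hat G$-representation via Lemma~\ref{lem:RR-RU-identification}. The paper simply lists these three facts without spelling out the verification of the orientation axioms as you do, but the logical content is the same.
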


\begin{proof}
    This follows from the following facts.
    \begin{enumerate}
        \item $Res^{\hat{G}}_{G}(\rho)=\epsilon$.
        \item The restriction $Res^{\hat{G}}_{G}$ is a ring map.
        \item All irreducible complex representation of $\hat{G}$ are restrictions of irreducible Real $\hat{G}$-representations (by Lemma \ref{lem:RR-RU-identification}).
    \end{enumerate}
\end{proof}

\begin{remark}
    If a $G$ is equipped with the trivial augmentation and is quasi-abelian (hence abelian), then the data of a Real orientation of $E_G$ is equivalent to the data of a complex orientation of $E_G$.
\end{remark}

If $\alpha$ is an irreducible Real representation of $\eta$, we may define elements $y(\alpha) \in E_\eta^\rho(\mathbb{C}P(\mathcal{U}_\eta))$ by pulling back $y(\alpha)$ along the map $\mathbb{C}P(\mathcal{U}_\eta) \to \mathbb{C}P(\mathcal{U}_\eta)$ induced by tensoring with $\alpha$. We will see in Proposition \ref{prop:y_alpha_generate} that $E_\eta^\bigstar (\C P(\mathcal{U}_\eta))$ has additive bases given by sequences of products of the various $y(\alpha)$. 

Next, we define the notion of a Real global orientation of a Real global spectrum. Recall the notation $\widehat{\mathbb{T}} := \mathbb{T} \rtimes C_2$. As above, let $\rho$ be the Real representation of $\hat{\mathbb{T}}$ on $\mathbb{C}$, where $\mathbb{T}$ acts trivially and $C_2$ acts by complex conjugation. Let $\hat{\tau}$ be the Real representation of $\hat{\mathbb{T}}$ where $\mathbb{T}$ acts tautologically (ie by scalar multiplication), and $C_2$ acts by complex conjugation.

\begin{definition}
    Suppose $E$ be a homotopy commutative Real global spectrum. A Real global orientation of $E$ is an $RO(\widehat{\mathbb{T}})$-graded unit in $\pi_{\rho - \hat{\tau}}^{\widehat{\mathbb{T}}}(E_{\widehat{\mathbb{T}}})$ which restricts to the multiplicative identity along the canonical inclusion $C_2 \rightarrow \widehat{\mathbb{T}}$.
\end{definition}

\begin{remark}
    It would be very interesting to write down a notion of $C_p$-global orientation which generalizes the $\mu_p$-orientations of \cite{HSW20} in the same way that our definition of Real global orientation generalizes Real orientations of $C_2$-spectra. 
\end{remark}

Now we discuss how a Real global orientation of a homotopy commutative Real global ring spectrum gives rise to Real $\eta$-orientations of $E_\eta$ for each quasi-abelian $\eta$.

\begin{proposition}
\label{prop: Real global orientation induces Real hat{G} orientation}
    Let $E$ be a homotopy commutative Real global ring spectrum. Then a Real global orientation for $E$ induces a Real $\eta$-orientations on $E_\eta$ for all $\eta$.
\end{proposition}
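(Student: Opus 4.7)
The plan is to construct, for each quasi-abelian augmented group $\eta : G \to C_2$, the desired class $y(\rho) \in E_\eta^\rho(\C P(\mathcal{U}_\eta))$ by pulling the Real global orientation $u$ back along the augmented classifying maps for irreducible Real $\eta$-representations and then assembling the resulting classes into a single cohomology class on $\C P(\mathcal{U}_\eta)$.

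First, I would set up the pullback mechanism. By Proposition \ref{prop:quasi-abelian}, every irreducible Real $\eta$-representation $\alpha$ is one-dimensional and therefore classified by an augmented homomorphism $\alpha : G \to \widehat{\mathbb{T}}$. Because $\alpha$ is augmented, the Real global structure identifies $\alpha^* E_{\widehat{\mathbb{T}}}$ with $E_\eta$, so restriction along $\alpha$ yields a ring map on $RO$-graded homotopy
\[
\alpha^* : \pi^{\widehat{\mathbb{T}}}_{\rho - \hat\tau}(E_{\widehat{\mathbb{T}}}) \longrightarrow \pi^G_{\rho - \alpha}(E_\eta) \cong \widetilde{E}_\eta^\rho(S^\alpha) \cong \widetilde{E}_\eta^\rho(\C P(\rho \oplus \alpha))
\]
carrying $u$ to a class $\alpha^*(u)$. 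Because $u$ is an $RO(\widehat{\mathbb{T}})$-graded unit and $\alpha^*$ is a ring map, $\alpha^*(u)$ is an $RO(G)$-graded unit in $\widetilde{E}_\eta^\rho(\C P(\rho \oplus \alpha))$.

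The main step is to glue the family $\{\alpha^*(u)\}_\alpha$ into a single class $y(\rho) \in E_\eta^\rho(\C P(\mathcal{U}_\eta))$. The approach is to filter $\mathcal{U}_\eta$ by finite-dimensional Real sub-representations $V_i$ each containing $\rho$ as a summand, to inductively build compatible lifts $y_i \in E_\eta^\rho(\C P(V_i))$ whose restrictions along the inclusions $\C P(\rho \oplus \alpha) \hookrightarrow \C P(V_i)$ recover $\alpha^*(u)$, and then to pass to the colimit via the Milnor $\lim^1$ exact sequence. The inductive step rests on the Real projective-bundle computation recorded in Proposition \ref{prop:y_alpha_generate}, which presents $E_\eta^\bigstar(\C P(V_i))$ as a free module generated by products of first Chern classes of the irreducible summands of $V_i$; this presentation makes the restriction maps between successive stages surjective, so the inverse system is Mittag--Leffler and the $\lim^1$ obstruction vanishes.

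Finally, I would verify the defining conditions of a Real $\eta$-orientation. By construction the restriction of $y(\rho)$ along $\C P(\rho \oplus \alpha) \hookrightarrow \C P(\mathcal{U}_\eta)$ equals $\alpha^*(u)$, which is a unit. For the normalization at $\alpha = \rho$, observe that the augmented map classifying $\rho$ factors as $G \xrightarrow{\eta} C_2 \hookrightarrow \widehat{\mathbb{T}}$, so the defining hypothesis that $u$ restricts to the multiplicative identity along $C_2 \to \widehat{\mathbb{T}}$ forces $\rho^*(u) = 1$. The primary obstacle is the gluing step, whose feasibility depends on the Real projective-bundle theorem; once that input is in place the remainder of the argument reduces to the unit property of $u$ and the functoriality of the Real global structure on $RO(C_2)$-graded homotopy (Proposition \ref{prop:RO(C_2)-grading-is-well-defined}).
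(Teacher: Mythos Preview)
Your argument contains a circularity: you invoke Proposition~\ref{prop:y_alpha_generate} to run the gluing step, but that proposition is stated and proved under the hypothesis that $E_{\hat G}$ is already Real $\hat G$-oriented, i.e.\ that a class $y(\rho) \in E^\rho_\eta(\C P(\mathcal{U}_\eta))$ has been chosen. The classes $y(\alpha)$ and $x(\alpha)$ used in Cole's inductive argument are defined by pulling back $y(\rho)$ along tensor-by-$\alpha$ on $\C P(\mathcal{U}_\eta)$, so you cannot appeal to the projective-bundle splitting before the orientation exists. One could try to bootstrap a finite-stage version of Proposition~\ref{prop:y_alpha_generate} from the unit classes $\alpha^*(u) \in \widetilde E^\rho_\eta(S^\alpha)$ alone, but you have not done this, and even then the coherent-lift problem (choosing the $y_i$ so that they actually restrict to one another, not merely that some lift exists) needs an argument beyond Mittag--Leffler surjectivity.

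The paper takes a completely different route that sidesteps gluing entirely. It identifies $\C P(\mathcal{U}_\eta)$ with the $(\widehat{\mathbb{T}},G)$-homotopy orbit space $(S^{\hat\tau})_{h_G \widehat{\mathbb{T}}} = S(\mathcal{U}_\eta)_+ \wedge_{\mathbb{T}} S^{\hat\tau}$, using the unit sphere model for $E_G\widehat{\mathbb{T}}$ established in Section~\ref{Section homotopy orbit construction}. The orientation $t \in E^\rho_{\widehat{\mathbb{T}}}(S^{\hat\tau})$ is then pushed through the homotopy-orbit map $h_{G,\widehat{\mathbb{T}}}(S^{\hat\tau})$ (built from the induction isomorphism of Lemma~\ref{lem induction iso}) to yield a single global class $t^{(G)} \in E^\rho_\eta(\C P(\mathcal{U}_\eta))$ directly. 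The verification that $t^{(G)}$ restricts to a unit on each $\C P(\rho \oplus \alpha)$ is exactly Lemma~\ref{HauLemma3.4}, which identifies the composite $h_{G,\widehat{\mathbb{T}}}$ followed by restriction to the $\alpha$-fixed stratum with the restriction map $(\mathrm{Id}_G,\alpha)^*$; this recovers your observation that $\alpha^*(u)$ is a unit, but now as a consequence rather than as raw data to be assembled.
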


We mimic the argument in \cite[5.4.1]{Hau22}.

\begin{proof}
    Start by taking the homotopy orbits by using the unit sphere as a model for the universal $\widehat{\mathbb{T}}$-space $E_G \widehat{\mathbb{T}}$ in $G$-spaces. Start with the tautological $\widehat{\mathbb{T}}$-representation (which has complex dimension $1$), and view it as a Real $\widehat{\mathbb{T}}$-bundle over a point. Next, restrict along the projection $p : G \times_{C_2} \widehat{\mathbb{T}} \rightarrow \widehat{\mathbb{T}}$. If we take $(\widehat{\mathbb{T}}, G)$-homotopy orbits using this model, then we obtain the tautological $G$-line bundle over the $G$-space $\C P( \mathcal{U}_\eta )$. Taking one-point compactifications, we may identify $(S^\tau)_{h_{G, \widehat{\mathbb{T}}}}$ with $\C P( \mathcal{U}_\eta )$.

    Our Real global orientation $t$ is an element of $E^{\rho}_{\widehat{\mathbb{T}}}(S^\tau)$. By restricting along the projection $p : G \times_{C_2} \widehat{\mathbb{T}} \rightarrow \widehat{\mathbb{T}},$ we obtain a unit in \[ E^{\rho}_{G \times_{C_2} \widehat{\mathbb{T}}}(p^*(S^\tau)) \] Applying homotopy orbits $h_{G,\widehat{\mathbb{T}}}(S^\tau)$ to this element yields an element $t^{(G)}$ of \[ E^{\rho}_\eta((S^\tau)_{h_{G, \widehat{\mathbb{T}}}}) \cong E^{\rho}_\eta(\C P( \mathcal{U}_\eta )) \] which we claim is a Real $\eta$-orientation.

    To show this, we must check that $t^{(G)}$ restricts to a unit along each inclusion $\C P(\epsilon \oplus \alpha) \subset \C P(\mathcal{U}_\eta)$. First, recall the identifications \[ \C P(\epsilon \oplus \alpha) \cong S^\alpha \] \[ S(\mathcal{U}_\eta)_+ \wedge_{\mathbb{T}} S^\tau \cong (S^\tau)_{h_{G, \widehat{\mathbb{T}}}} \cong \C P( \mathcal{U}_\eta ) \] and \[ S^\alpha \cong \alpha^* S^\tau \] Now the map $\alpha$ determines a graph subgroup $H$ of $G \times_{C_2} \widehat{\mathbb{T}}$, and a $H$-fixed point of $S(\mathcal{U}_\eta)$ determines a vector in $\mathcal{U}_\eta$ which determines a subrepresentation isomorphic to $\alpha$. This subrepresentation then determines a copy of $S^\alpha$ in $S(\mathcal{U}_\eta)$, which determines a map $S^\alpha \rightarrow S(\mathcal{U}_\eta)_+ \wedge_{\widehat{\mathbb{T}}} S^\tau$. This map corresponds to the inclusion of complex projective spaces via the above identifications.

    From this description, we see that the restriction of $t^{(G)}$ is equal to the image of $t$ via \[ E^{\rho}_{\widehat{\mathbb{T}}}(S^\tau) \xrightarrow{pr_{\widehat{\mathbb{T}}}^*} E^{\rho}_{G \times_{C_2} \widehat{\mathbb{T}}}(pr_{\widehat{\mathbb{T}}}^* S^\tau) \xrightarrow{h_{G, \hat{T}}(pr_{\widehat{\mathbb{T}}}^* S^\tau)} E^{\rho}_\eta((S^\tau)_{h_G \widehat{\mathbb{T}}}) \xrightarrow{(*_\alpha \wedge -)^*} E^{\rho}_\eta(S^\alpha) \] By Lemma \ref{HauLemma3.4} this is just the restriction along $\alpha$. Since restriction maps preserve multiplication, they take units to units and $1$ to $1$.
\end{proof}

\subsection{Real equivariant formal group laws}

Suppose $G$ is an abelian compact Lie group. The definition of a $G$-equivariant formal group law is modeled after $E^*_G(\C P( \mathcal{U}_G))$ for a complex oriented $G$-ring spectrum $E_G$. In this subsection, we will investigate the Real analogue of this story. 

Throughout this subsection, we let $\hat{G} = G \rtimes C_2$, and denote also by $\hat{G}$ the augmentation given by projection onto $C_2$. We will investigate the structure possessed by $E^\bigstar_{\hat{G}}(\mathbb{C}P(\mathcal{U}_{\hat{G}}))$ for a Real oriented $\hat{G}$-ring spectrum $E_{\hat{G}}$. We use $\bigstar$ to denote the $RO(C_2)$-graded cohomology of a such a spectrum $E_{\hat{G}}$, which is a sub-object of the full $RO(\hat{G})$-graded cohomology. Our first task is to prove that the $E_{\hat{G}}$-cohomology of $\mathbb{C}P(\mathcal{U}_{\hat{G}})$ is free.

Following \cite[Section 4]{CGK00}, let $\hat{V}$ and $\hat{W}$ be Real $\hat{G}$-representations. Then there is a short exact sequence
\[
0\leftarrow E_{\hat{G}}^{\bigstar}(\C P(\hat{V}))\leftarrow E_{\hat{G}}^{\bigstar}(\C P(\hat{V}\oplus \hat{W}))\leftarrow E_{\hat{G}}^{\bigstar}(\C P(\hat{V}\oplus \hat{W}),\C P(\hat{V}))\leftarrow 0.
\]
For any Real irreducible $\hat{G}$-representation $\alpha$, the element $y(\alpha)$ uniquely defines an element $x(\alpha)\in E^{\rho}_{\hat{G}}(\C P(\mathcal{U}_{\hat{G}}),\C P(\alpha))$. For $\hat{V}=\hat{\alpha_1}\oplus\cdots\oplus \hat{\alpha_n}$, we can define
\[
x(\hat{V})=x(\hat{\alpha_1})*\cdots *x(\hat{\alpha_n})\in E^{\bigstar}_{\hat{G}}(\C P(\mathcal{U}_{\hat{G}}),\C P(\hat{V})).
\]
Here $*$ is the external cup product along the map
\begin{align*}
    &\bar{\Delta}:(\C P(\hat{V}\oplus \hat{W}\oplus \hat{Z}), \C P(\hat{V}\oplus \hat{W}))\to\\
    &(\C P(\hat{V}\oplus \hat{Z}), \C P(\hat{V}))\times \bar{\Delta}:(\C P(\hat{W}\oplus \hat{Z}), \C P(\hat{W}))
\end{align*}
sending $(v:w:z)\mapsto ((v:z),(w:z))$.


\begin{proposition}
\label{prop:y_alpha_generate}
    Suppose $E_{\hat{G}}$ is a Real oriented $\hat{G}$-spectrum, and suppose $\hat{V}^n = \oplus_{i=1}^n \hat{\alpha_i}$, for $n \geq 1$, is a complete flag in a complete Real $\hat{G}$-universe $\mathcal{U}_{\hat{G}}$ with $\hat{\alpha_i}$ an irreducible Real $\hat{G}$-representation. Then $E^{\bigstar}_{\hat{G}}( \C P(\mathcal{U}_{\hat{G}}))$ is a free $E^\bigstar_{\hat{G}}$-module with basis $\{1, y(\hat{\alpha_1}), y(\hat{\alpha_1}) y(\hat{\alpha_2}),\dots\}$.
\end{proposition}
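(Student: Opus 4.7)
The plan is to prove the finite analog by induction on $n$, and then pass to the colimit via a Milnor sequence.

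For the base case $n = 1$, the space $\mathbb{C}P(\hat{V}^1) = \mathbb{C}P(\hat{\alpha}_1)$ is a single $\hat{G}$-fixed point, so $E^{\bigstar}_{\hat{G}}(\mathbb{C}P(\hat{V}^1)) \cong E^{\bigstar}_{\hat{G}}$ is free of rank one on $\{1\}$.

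For the inductive step, I assume the result for $n$ and consider the cofiber sequence
\[
\mathbb{C}P(\hat{V}^n)_+ \to \mathbb{C}P(\hat{V}^{n+1})_+ \to \mathbb{C}P(\hat{V}^{n+1})/\mathbb{C}P(\hat{V}^n).
\]
The complement $\mathbb{C}P(\hat{V}^{n+1}) \setminus \mathbb{C}P(\hat{V}^n)$ is $\hat{G}$-equivariantly identified with the Real $\hat{G}$-representation $\mathrm{Hom}(\hat{\alpha}_{n+1}, \hat{V}^n)$ of complex dimension $n$ (sending a line $[v+w]$ with $w \in \hat{\alpha}_{n+1}$ nonzero to the linear map $w \mapsto v$), so the quotient is homeomorphic to $S^{\mathrm{Hom}(\hat{\alpha}_{n+1}, \hat{V}^n)}$. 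The Real stable structure induced by the Real $\eta$-orientation then yields a free rank-one $E^{\bigstar}_{\hat{G}}$-module isomorphism
\[
E^{\bigstar}_{\hat{G}}(\mathbb{C}P(\hat{V}^{n+1}), \mathbb{C}P(\hat{V}^n)) \cong E^{\bigstar - n\rho}_{\hat{G}}.
\]
I would check that the restriction of $x(\hat{V}^n) = x(\hat{\alpha}_1) * \cdots * x(\hat{\alpha}_n)$ to the pair $(\mathbb{C}P(\hat{V}^{n+1}), \mathbb{C}P(\hat{V}^n))$ is a generator of this rank-one module, by verifying it restricts to a unit on the top cell; this amounts to unpacking the definition of the external product $*$ via $\bar{\Delta}$ together with the unit-restriction condition on each $y(\hat{\alpha}_i)$ coming from the Real orientation.

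Next, the long exact sequence of the pair reads
\[
\cdots \to E^{\bigstar - n\rho}_{\hat{G}} \to E^{\bigstar}_{\hat{G}}(\mathbb{C}P(\hat{V}^{n+1})) \xrightarrow{\mathrm{res}} E^{\bigstar}_{\hat{G}}(\mathbb{C}P(\hat{V}^n)) \to E^{\bigstar - n\rho + 1}_{\hat{G}} \to \cdots.
\]
By the inductive hypothesis, each basis element $y(\hat{\alpha}_1) \cdots y(\hat{\alpha}_k)$ for $k \leq n-1$ on the right is the restriction of the corresponding global class in $E^{\bigstar}_{\hat{G}}(\mathbb{C}P(\mathcal{U}_{\hat{G}}))$, which also restricts to $\mathbb{C}P(\hat{V}^{n+1})$. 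Hence $\mathrm{res}$ is surjective, and the long exact sequence breaks into a split short exact sequence. The resulting free basis for $E^{\bigstar}_{\hat{G}}(\mathbb{C}P(\hat{V}^{n+1}))$ is the inductive basis lifted to $\mathbb{C}P(\hat{V}^{n+1})$, augmented by the image of the relative generator $x(\hat{V}^n)$, which is precisely $y(\hat{\alpha}_1) \cdots y(\hat{\alpha}_n)$.

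Finally, to pass to $\mathbb{C}P(\mathcal{U}_{\hat{G}}) = \colim_n \mathbb{C}P(\hat{V}^n)$, I would invoke the Milnor short exact sequence
\[
0 \to {\lim_n}^{\!1} E^{\bigstar - 1}_{\hat{G}}(\mathbb{C}P(\hat{V}^n)) \to E^{\bigstar}_{\hat{G}}(\mathbb{C}P(\mathcal{U}_{\hat{G}})) \to \lim_n E^{\bigstar}_{\hat{G}}(\mathbb{C}P(\hat{V}^n)) \to 0.
\]
The surjectivity of the transition maps established in the inductive step supplies the Mittag-Leffler condition, so $\lim^1$ vanishes, and the inverse limit of the free modules on the compatible bases realizes the claimed structure in the appropriate topologically complete sense. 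The main obstacle I anticipate is verifying that the restriction of $x(\hat{V}^n)$ generates the relative cohomology $E^{\bigstar}_{\hat{G}}(\mathbb{C}P(\hat{V}^{n+1}), \mathbb{C}P(\hat{V}^n))$; this requires carefully threading together the geometric identification of the top cell with $S^{\mathrm{Hom}(\hat{\alpha}_{n+1}, \hat{V}^n)}$, the Real stability isomorphism, and the behavior of the external product under the diagonal map $\bar{\Delta}$.
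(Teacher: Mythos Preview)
Your argument is correct and shares the paper's key step: showing that the relative class $x(\hat{V}^n)$ restricts to a unit in $E^{\bigstar}_{\hat{G}}(\C P(\hat{V}^{n+1}),\C P(\hat{V}^{n}))$, so that the relevant long exact sequence splits. The organizational difference is that the paper, following Cole and \cite{CGK00}, uses the cofiber sequence of pairs
\[
(\C P(\hat{V}^{n+1}),\C P(\hat{V}^{n}))\to (\C P(\mathcal{U}_{\hat{G}}),\C P(\hat{V}^{n}))\to (\C P(\mathcal{U}_{\hat{G}}),\C P(\hat{V}^{n+1}))
\]
to show directly by induction that $E^{\bigstar}_{\hat{G}}(\C P(\mathcal{U}_{\hat{G}}),\C P(\hat{V}^{n}))$ is free of rank one on $x(\hat{V}^n)$, working with the infinite projective space throughout. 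This obviates your final Milnor/Mittag--Leffler step, since the absolute cohomology of $\C P(\mathcal{U}_{\hat{G}})$ is then assembled from the short exact sequences
\[
0 \to E^{\bigstar}_{\hat{G}}(\C P(\mathcal{U}_{\hat{G}}),\C P(\hat{V}^{n})) \to E^{\bigstar}_{\hat{G}}(\C P(\mathcal{U}_{\hat{G}})) \to E^{\bigstar}_{\hat{G}}(\C P(\hat{V}^{n})) \to 0
\]
without any inverse limit argument. Your route via the finite filtration and $\lim^1$ is a perfectly standard alternative and gives the same conclusion; the paper's version is slightly cleaner in that it never leaves the infinite space.
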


\begin{proof}
    We follow the proof of \cite[Theorem 4.3]{CGK00} (originally due to Cole \cite{Col96}), by using the cofiber sequence\footnote{Note that there is a minor misprint in the analogous cofiber sequence in \cite{CGK00}.}
    \[
    (\C P(\hat{V}^{n+1}),\C P(\hat{V}^{n}))\to (\C P(\mathcal{U}_{\hat{G}}),\C P(\hat{V}^{n}))\to (\C P(\mathcal{U}_{\hat{G}}),\C P(\hat{V}^{n+1})).
    \]
    By induction on $n$, one can show that $x(\hat{V}^n)$ (following the notation from \cite{CGK00}) restricts to a unit in the $E_{\hat{G}}$-cohomology of $(\C P(\hat{V}^{n+1}),\C P(\hat{V}^{n}))$, thus the above cofiber sequence gives rise to split short exact sequence in $E_{\hat{G}}$-cohomology.
\end{proof}

\begin{proposition}\label{Real FGL for Real-oriented spectrum}
    Let $G$ be an abelian compact Lie group and let $\hat{G} = G \rtimes C_2$. If $E_{\hat{G}}$ is a Real oriented $\hat{G}$-spectrum, then
    \[ E^\bigstar_{\hat{G}}( \C P(\mathcal{U}_{\hat{G}}) \times \C P(\mathcal{U}_{\hat{G}})) \cong E^\bigstar_{\hat{G}}(\C P(\mathcal{U}_{\hat{G}})) \widehat{\otimes}_{E^\bigstar_{\hat{G}}} E^\bigstar_{\hat{G}}(\C P(\mathcal{U}_{\hat{G}})), \]
    and we can define structure maps
    \begin{align*}
        &\hat{\Delta}: E^\bigstar_{\hat{G}}( \C P(\mathcal{U}_{\hat{G}})) \to E^\bigstar_{\hat{G}}(\C P(\mathcal{U}_{\hat{G}})) \widehat{\otimes}_{E^\bigstar_{\hat{G}}} E^\bigstar_{\hat{G}}(\C P(\mathcal{U}_{\hat{G}})),\\
        &\hat{\theta}:E^{\bigstar}_{\hat{G}}(\C P(\mathcal{U}_{\hat{G}}))\to {(E^{\bigstar}_{\hat{G}})}^{G^*}
    \end{align*}
    so that
    \[ \left( E^{\bigstar}_{\hat{G}}, E^{\bigstar}_{\hat{G}}( \C P(\mathcal{U}_{\hat{G}})),\hat{\Delta},\hat{\theta}, y(\rho) \right) \]
    is a $G$-equivariant formal group law
    whose restriction to $G\subset \hat{G}$ is the $G$-equivariant formal group law
    \[ \left( E^*_{{G}}, E^{*}_{{G}}( \C P(\mathcal{U}_{{G}})),{\Delta},{\theta}, y(\epsilon) \right). \]
\end{proposition}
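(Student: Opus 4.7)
The plan is to proceed in four steps: establish the Künneth isomorphism, construct the structure maps geometrically, verify the axioms of a $G$-equivariant formal group law, and finally check compatibility with restriction.

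First I would prove the Künneth isomorphism. By Proposition \ref{prop:y_alpha_generate}, $E^\bigstar_{\hat{G}}(\C P(\mathcal{U}_{\hat{G}}))$ is a free $E^\bigstar_{\hat{G}}$-module, and the same argument (using the cofiber sequences in the proof of that proposition) shows freeness of $E^\bigstar_{\hat{G}}(\C P(\hat{V}) \times \C P(\hat{W}))$ for finite flags $\hat{V}, \hat{W}$, with basis given by products of external cup products of the generators $y(\hat{\alpha}_i)$. This identifies $E^\bigstar_{\hat{G}}(\C P(\hat{V}) \times \C P(\hat{W}))$ with the ordinary tensor product $E^\bigstar_{\hat{G}}(\C P(\hat{V})) \otimes_{E^\bigstar_{\hat{G}}} E^\bigstar_{\hat{G}}(\C P(\hat{W}))$. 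Taking the inverse limit over finite flags inside $\mathcal{U}_{\hat{G}}$ yields the completed tensor product, once one interprets the completion with respect to the ideals generated by the $y(\hat{\alpha}_i)$'s and checks that the relevant $\lim^1$ terms vanish by freeness.

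Next I would build the structure maps. Choose an isomorphism $\mathcal{U}_{\hat{G}} \otimes \mathcal{U}_{\hat{G}} \cong \mathcal{U}_{\hat{G}}$ of complete Real $\hat{G}$-universes (existing by Lemma \ref{lem:existence-of-complete-Real-universes}); the tensor product of Real tautological line bundles then yields a classifying map $\mu \colon \C P(\mathcal{U}_{\hat{G}}) \times \C P(\mathcal{U}_{\hat{G}}) \to \C P(\mathcal{U}_{\hat{G}})$, and I set $\hat{\Delta} = \mu^*$, which lands in the completed tensor product by the Künneth step. For $\hat{\theta}$, by Lemma \ref{lemma:isomorphic-real-reps} every complex character $\alpha \in G^*$ lifts (uniquely up to isomorphism) to a Real character $\hat{\alpha} \colon \hat{G} \to \widehat{\mathbb{T}}$; its underlying one-dimensional Real $\hat{G}$-representation embeds into $\mathcal{U}_{\hat{G}}$ and determines a $\hat{G}$-fixed point of $\C P(\mathcal{U}_{\hat{G}})$. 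Pulling back along this inclusion gives an evaluation $E^\bigstar_{\hat{G}}(\C P(\mathcal{U}_{\hat{G}})) \to E^\bigstar_{\hat{G}}$, and the product over $\alpha \in G^*$ defines $\hat{\theta}$.

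Then I would verify the $G$-equivariant formal group law axioms, following the template of \cite{CGK00} and \cite{Hau22} with $\rho$ replacing $2$ in the grading. Coassociativity of $\hat{\Delta}$ is the associativity of the tensor-product classifying map $\mu$ on line bundles, up to the chosen universe isomorphism. The counit axiom uses that tensoring with the trivial Real line $\rho$ is the identity on line bundles; here is where the normalization in Definition \ref{Definition Real orientation}, that $y(\rho)$ restricts to the multiplicative identity along $\C P(\rho \oplus \rho) \cong S^\rho$, is needed. The compatibility $\hat{\theta} \circ \hat{\Delta}$ with multiplication of characters reduces, under evaluation at $(\hat{\alpha}, \hat{\beta})$, to the statement that $\mu$ restricts to the classifier of $\hat{\alpha} \otimes \hat{\beta}$, which is built into the construction. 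The element $y(\rho)$ is the orientation coordinate. I expect this axiom-checking to be the main obstacle, since one must keep track of a genuine $RO(C_2)$-grading throughout and justify the completed tensor product interpretation carefully, but the arguments of \cite{CGK00} adapt essentially verbatim once the right objects are in place.

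Finally, for the restriction statement, the key observation is that the functor $\text{res}^{\hat{G}}_G$ is a strong monoidal ring map sending $\rho$ to $\epsilon$ (the two-real-dimensional trivial complex $G$-representation), so $y(\rho)$ restricts to $y(\epsilon)$ by naturality. The tensor-product map $\mu$ and the universe isomorphism restrict to their complex analogues since restricting a Real $\hat{G}$-universe to $G$ recovers a complete complex $G$-universe. By Lemma \ref{lem:RR-RU-identification}, restriction gives a bijection between isomorphism classes of irreducible Real $\hat{G}$-reps and irreducible complex $G$-reps, so the indexing set $G^*$ and the evaluation maps defining $\hat{\theta}$ recover those defining $\theta$. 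All the axioms are then preserved by restriction, yielding the classical $G$-equivariant formal group law.
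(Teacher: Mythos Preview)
Your proposal is correct and follows essentially the same approach as the paper: the K\"unneth isomorphism comes from freeness (Proposition~\ref{prop:y_alpha_generate}), $\hat{\Delta}$ is induced by the tensor-product classifying map, $\hat{\theta}$ by the inclusion $G^* \cong \coprod_{\hat{\alpha}} \C P(\hat{\alpha}) \hookrightarrow \C P(\mathcal{U}_{\hat{G}})$, the axiom verification is deferred to the classical arguments of \cite{CGK00}, and the restriction statement reduces to $\text{res}^{\hat{G}}_G \rho = \epsilon$ and $\text{res}^{\hat{G}}_G \mathcal{U}_{\hat{G}} = \mathcal{U}_G$. The paper's own proof is considerably terser than yours but records exactly these ingredients.
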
 

\begin{proof}
    By Proposition \ref{prop:y_alpha_generate}, we know that $E^\bigstar_{\hat{G}}(\mathbb{C}P(\mathcal{U}_{\hat{G}}))$ is a free $E^\bigstar_{\hat{G}}$-module, which implies that we have a Kunneth isomorphism \[ E^\bigstar_{\hat{G}}( \C P(\mathcal{U}_{\hat{G}}) \times \C P(\mathcal{U}_{\hat{G}})) \cong E^\bigstar_{\hat{G}}(\C P(\mathcal{U}_{\hat{G}})) \widehat{\otimes}_{E^\bigstar_{\hat{G}}} E^\bigstar_{\hat{G}}(\C P(\mathcal{U}_{\hat{G}})). \] The map $\C P(\mathcal{U}_{\hat{G}}) \times \C P(\mathcal{U}_{\hat{G}}) \rightarrow \C P(\mathcal{U}_{\hat{G}})$ classifying the tensor product of Real line bundles gives rise to the coproduct $\hat{\Delta}$. The inclusion
    \[
    G^* \cong \coprod_{\hat{\alpha}}\C P(\hat{\alpha})\to \C P(\mathcal{U}_{\hat{G}})
    \]
    is a group homomorphism, and gives rise to the map 
    \[\hat{\theta}:E^{\bigstar}_{\hat{G}}(\C P(\mathcal{U}_{\hat{G}}))\to {(E^{\bigstar}_{\hat{G}})}^{G^*}.\]
    Proof of regularity of $y(\rho)$ and the axioms for equivariant formal group laws is the same as the classical case. The second assertion follows from the fact that $\text{res}^{\hat{G}}_G E_{\hat{G}} = E_G$, $\text{res}^{\hat{G}}_G \mathcal{U}_{\hat{G}} = \mathcal{U}_G$, and $\text{res}^{\hat{G}}_G y(\rho) = y (\epsilon).$
\end{proof}

\begin{remark}
   Following \cite[Remark 11.2]{CGK00}, we can define an action of $G^*$ on $E_{\hat{G}}^\bigstar(\C P(\mathcal{U}_{\hat{G}}))$ using the formula
    \[
    l_{\alpha}(r)=(\hat{\theta}(\alpha^{-1})\otimes id)\hat{\Delta}(r).
    \]
    We can also define Euler classes by $e(\alpha)=\hat{\theta}(\rho)(y(\alpha))$.
\end{remark}

\begin{remark}\label{remark:Real FGL vs FGL}
    When $G$ is the trivial group, so that $\hat{G} = C_2$, it is well known that the $C_2$ action on the formal group law $F$ associated to a Real oriented $C_2$-spectrum corresponds to the map $x \mapsto -[-1]_Fx$, where $[-1]_Fx$ denotes the $-1$-series of $F$. This means that $C_2$ acts by group inversion on the formal group scheme $\mathbb{G} = \text{Spf}(E_{C_2}^*(\mathbb{C}P^\infty))$. We can give a similar description for any compact abelian Lie group $G$. In this more general setting, the action of $C_2$ induces the Hopf algebra antipode (the equivariant analogue of $-[-1]_Fx$) on $E_{\hat{G}}^\bigstar(\mathbb{C}P(\mathcal{U}_{\hat{G}}))$, and also on the commutative subring $E_{\hat{G}}^{\rho *}(\mathbb{C}P(\mathcal{U}_{\hat{G}})) \subset E_{\hat{G}}^\bigstar(\mathbb{C}P(\mathcal{U}_{\hat{G}}))$. This means that geometrically, $C_2$ acts by group inversion on the formal group scheme $\mathbb{G} = \text{Spf}\left(E_{\hat{G}}^{\rho *}(\mathbb{C}P(\mathcal{U}_{\hat{G}})) \right) $.
    If we restrict to the sub-group scheme $\varphi: G^* \to \mathbb{G}$, then $C_2$ similarly acts by group inversion on $G^*$, and so we may describe the $C_2$-action by the diagram 
    
     \[ \begin{tikzcd}
    G^* \arrow{r}{\varphi} \arrow{d}{\iota} & \mathbb{G} \arrow{d}{\iota} \\
    G^* \arrow{r}{\varphi} & \mathbb{G}
    \end{tikzcd} \]
    where $\iota$ denotes group inversion, and $\varphi = \text{Spf}(\hat{\theta})$. Note that \cite{CGK00} have proved that any $G$-equivariant formal group law automatically admits an antipode, this means a Real $\hat{G}$-equivariant formal group law contains the same amount of data as the underlying $G$-equivariant formal group law.
\end{remark}

\subsection{Real global group laws}

Since a Real global orientation gives Real $\eta$-orientations for each augmented quasi-abelain $\eta$, which in turn give Real $\hat{G}$-equivariant formal group laws, it is natural to expect that Real global orientations encode some ``global" algebraic structure. In this subsection, we will define just that: a Real global group law. First, we recall the notion of a global group law from \cite{Hau22}.

\begin{definition}[\cite{Hau22}]
\label{def:GGL}
    A global group law is a contravariant functor $X$ from the full subcategory of compact abelian Lie groups spanned by tori, to the category of $\mathbb{Z}$-graded rings, along with a class $e \in X(\mathbb{T})$ such that if $V : \mathbb{T}^n \rightarrow \mathbb{T}$ is a split surjection, then the sequence \[ 0 \rightarrow X(\mathbb{T}^n) \xrightarrow{V^*e} X(\mathbb{T}^n) \xrightarrow{res_{ker(V)}^{\mathbb{T}^n}} X(ker(V)) \rightarrow 0 \] is exact.
\end{definition}

\begin{theorem}[\cite{Hau22}]
    The global group law $\mathbf{L}$ defined by \[ \mathbf{L}(\mathbb{T}^n) := \pi^{\mathbb{T}^n}_*(MU_{\mathbb{T}^n}) \] is the initial object in the category of global group laws.
\end{theorem}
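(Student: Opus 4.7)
The plan is to verify the two required axioms for $\mathbf{L}$ to be a global group law, and then to establish initiality against an arbitrary global group law $X$.

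For the first, contravariant functoriality comes directly from restriction of $MU$-homotopy along group homomorphisms $\mathbb{T}^m \to \mathbb{T}^n$, and the distinguished element $e \in \pi^{\mathbb{T}}_*(MU_{\mathbb{T}})$ is the Euler class of the tautological character, i.e.\ the image of the unit under the map induced by $S^0 \to S^{\tau}$. Given a split surjection $V : \mathbb{T}^n \to \mathbb{T}$ with $K := \ker V$, the cofiber sequence
\[ S(V)_+ \to S^0 \to S^V \]
of pointed $\mathbb{T}^n$-spaces, smashed with $MU_{\mathbb{T}^n}$, yields a long exact sequence on equivariant homotopy whose connecting map is multiplication by $V^{*} e$. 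Since $V$ splits, $S(V) \cong \mathbb{T}^n/K$ as $\mathbb{T}^n$-spaces, and an induction isomorphism of the type proved in Lemma \ref{lem induction iso} identifies $\pi^{\mathbb{T}^n}_*(MU_{\mathbb{T}^n} \wedge S(V)_+)$ with $\pi^K_*(MU_K) = \mathbf{L}(K)$ and the last map in the three-term sequence with restriction. Injectivity of multiplication by $V^{*} e$---equivalently, the vanishing of the relevant odd-degree equivariant homotopy---is Hausmann's evenness/regularity theorem for $MU_{\mathbb{T}^n}$, which must be cited here.

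For initiality, given any global group law $X$ with distinguished class $e_X$, I would construct ring maps $\varphi_n : \mathbf{L}(\mathbb{T}^n) \to X(\mathbb{T}^n)$ by invoking Hausmann's theorem that $\pi^{\mathbb{T}^n}_*(MU_{\mathbb{T}^n})$ is the universal $\mathbb{T}^n$-equivariant formal group law. Concretely, the ring $X(\mathbb{T}^n)$ together with the Euler classes $\alpha^{*}e_X$ for characters $\alpha : \mathbb{T}^n \to \mathbb{T}$, and with coaddition transported from the multiplication map $\mathbb{T} \times \mathbb{T} \to \mathbb{T}$ (viewed as a character of $\mathbb{T}^2$), should assemble into a $\mathbb{T}^n$-equivariant formal group law over $X(\mathbb{T}^0)$; its classifying map is then $\varphi_n$.

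The main obstacle is showing that this family is natural in $\mathbb{T}^n$ and is the only such family. Naturality requires comparing $\varphi_n \circ \operatorname{res}$ with $\operatorname{res} \circ \varphi_m$ for arbitrary homomorphisms of tori, and will be verified by induction on $n$: the global group law exact sequence on $X$ reduces each check to a statement about the universal $\mathbb{T}^k$-formal group laws for $k < n$. Uniqueness reduces to the claim that $\mathbf{L}(\mathbb{T}^n)$ is generated over $\mathbf{L}(\mathbb{T}^0) = \mathbb{L}$ by the Euler classes $\alpha^{*}e$ of characters, which again follows by induction from the exact sequences together with the evenness/regularity input; once this is known, the axioms force $\alpha^{*}e \mapsto \alpha^{*}e_X$, pinning down $\varphi_n$.
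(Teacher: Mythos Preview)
The paper does not supply its own proof of this theorem; it is stated purely as a citation of \cite{Hau22}, so there is no in-paper argument to compare against. I can still comment on your sketch.

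There is a genuine circularity problem in your approach to initiality. You propose to build $\varphi_n : \mathbf{L}(\mathbb{T}^n) \to X(\mathbb{T}^n)$ by ``invoking Hausmann's theorem that $\pi^{\mathbb{T}^n}_*(MU_{\mathbb{T}^n})$ is the universal $\mathbb{T}^n$-equivariant formal group law.'' But that universality statement is precisely the main theorem of \cite{Hau22}, and Hausmann deduces it \emph{from} the initiality of $\mathbf{L}$ among global group laws, not the other way around. Prior to \cite{Hau22} the claim that $MU^*_A$ carries the universal $A$-equivariant formal group law was the open Greenlees conjecture; there is no independent proof you can cite to break this loop.

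Your uniqueness argument also has a gap. The assertion that $\mathbf{L}(\mathbb{T}^n)$ is generated over $\mathbb{L}$ by the Euler classes $\alpha^*e$ is not a consequence of the global-group-law exact sequences alone: those sequences give $\mathbf{L}(\mathbb{T}^n)/(V^*e) \cong \mathbf{L}(\mathbb{T}^{n-1})$ for each split surjective $V$, but this does not imply that the Euler classes generate, even topologically, and in fact the structure of $\mathbf{L}(\mathbb{T}^n)$ involves further data not visible from Euler classes over $\mathbb{L}$. Hausmann's actual argument in \cite{Hau22} is organized quite differently: he constructs an initial global group law purely algebraically, proves a regularity property for it, and then identifies it with $\mathbf{L}$ by an analysis passing through geometric fixed points.
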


We focus our attention to augmented tori (which are those quasi-abelian augmented compact Lie groups of the form $\widehat{\mathbb{T}^n} := \mathbb{T}^n \rtimes C_2$, with $C_2$ acting by inversion in $\mathbb{T}^n$). Additionally note the identifications $\widehat{\mathbb{T}^n} \cong \widehat{\mathbb{T}} \times_{C_2} ... \times_{C_2} \widehat{\mathbb{T}}$. This is consitent with our usage elsewhere of $\hat{G}$ to denote an augmented group and $G$ to denote the kernel of the augmentation.

 Recall that if $E$ is a homotopy commutative Real globally oriented ring spectrum, then the $RO(C_2)$-graded homotopy groups of $E_\eta$ are contravariantly functorial in augmented maps $G \rightarrow H$. We will use the following cofiber sequence to prove a result relating these rings which is analogous to an argument of Sinha \cite{Sin01}.

\begin{proposition}
    Consider the subgroup inclusion $C_2 \rightarrow \widehat{\mathbb{T}}$ obtained by the canonical splitting  of the augmentation. Then we have a cofiber sequence \[ (\widehat{\mathbb{T}}/C_2)_+ \rightarrow S^0 \rightarrow S^{\hat{\tau}} \] of $\widehat{\mathbb{T}}$-spaces. 
\end{proposition}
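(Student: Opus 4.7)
The plan is to realize the given cofiber sequence as the standard Euler class cofiber sequence $S(V)_+ \to D(V)_+ \to S^V$ specialized to $V = \hat{\tau}$, and then to identify the $\widehat{\mathbb{T}}$-space $S(\hat{\tau})$ with the homogeneous space $\widehat{\mathbb{T}}/C_2$.

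First I would recall that for any orthogonal $\widehat{\mathbb{T}}$-representation $V$ there is a based cofiber sequence $S(V)_+ \to D(V)_+ \to S^V$ obtained from the inclusion of the unit sphere into the unit disc, with the quotient being the one-point compactification. Since $D(V)$ is convex, the $\widehat{\mathbb{T}}$-equivariant retraction to the origin shows $D(V)_+ \simeq S^0$ in the based $\widehat{\mathbb{T}}$-equivariant homotopy category, yielding a cofiber sequence $S(V)_+ \to S^0 \to S^V$. Applying this to $V = \hat{\tau}$ reduces the claim to identifying $S(\hat{\tau})$ with $\widehat{\mathbb{T}}/C_2$ as $\widehat{\mathbb{T}}$-spaces.

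Next I would construct an explicit $\widehat{\mathbb{T}}$-equivariant homeomorphism $\widehat{\mathbb{T}}/C_2 \xrightarrow{\cong} S(\hat{\tau})$. The canonical splitting embeds $C_2$ into $\widehat{\mathbb{T}} = \mathbb{T}\rtimes C_2$ as $\{(1,e),(1,\gamma)\}$. The orbit map $\widehat{\mathbb{T}} \to S(\hat{\tau}) = \mathbb{T}\subset \mathbb{C}$ defined by $(z,\epsilon)\mapsto \epsilon(z)\cdot 1$ (i.e.\ $(z,e)\mapsto z$ and $(z,\gamma)\mapsto \bar z = z^{-1}$) has stabilizer of $1\in S(\hat{\tau})$ equal to the subgroup $C_2$, since $(z,e)\cdot 1 = z$ equals $1$ iff $z=1$, and $(z,\gamma)\cdot 1 = z^{-1}$ equals $1$ iff $z=1$. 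One then checks this is surjective and descends to a $\widehat{\mathbb{T}}$-equivariant continuous bijection $\widehat{\mathbb{T}}/C_2 \to S(\hat{\tau})$, hence a homeomorphism by compactness. As a sanity check, the action of $\mathbb{T}\subset \widehat{\mathbb{T}}$ recovers scalar multiplication on $S(\hat{\tau})$, and the nontrivial element of $C_2$ acts by $z\mapsto z^{-1}=\bar z$, matching the description of $\hat{\tau}$ as $\mathbb{C}$ with $\mathbb{T}$ acting tautologically and $C_2$ by complex conjugation.

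Combining these two steps gives the desired cofiber sequence $(\widehat{\mathbb{T}}/C_2)_+ \to S^0 \to S^{\hat{\tau}}$. There is no serious obstacle here; the only point requiring mild care is getting the conventions for the semidirect product and the $C_2$-action on $\mathbb{T}$ lined up correctly so that the orbit map is genuinely $\widehat{\mathbb{T}}$-equivariant rather than twisted, which is handled by the explicit formula $(z,\epsilon)\mapsto \epsilon(z)$.
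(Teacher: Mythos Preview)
Your proof is correct and follows essentially the same approach as the paper: both realize the cofiber sequence as the standard $S(\hat{\tau})_+ \to D(\hat{\tau})_+ \to S^{\hat{\tau}}$ (the paper phrases this as replacing $S^0$ by a basepoint disjoint union the closed unit disk in $\C$) and identify $S(\hat{\tau}) \cong \widehat{\mathbb{T}}/C_2$. Your version is more explicit about the orbit map and the stabilizer computation, whereas the paper simply asserts $\widehat{\mathbb{T}}/C_2 \cong \mathbb{T}$ and that the quotient is $S^{\hat{\tau}}$.
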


\begin{proof}
    The first map sends base-point to base-point, and everything else to the other point of $S^0$. The second map is the inclusion of north and south poles. Note that $S^0$ is $\widehat{\mathbb{T}}$-homotopy equivalent to the disjoint union of a basepoint with the closed unit disk in $\C$, with its tautological $\widehat{\mathbb{T}}$-action. Our first map is then replaced by the cofibration obtained by sending the basepoint to the origin in $\C$, and $\widehat{\mathbb{T}}/C_2 \cong \mathbb{T}$ to the boundary of the disk. Quotienting out by this image clearly yields $S^{\hat{\tau}}$.
\end{proof}

\begin{definition}
\label{def:Real-GGL}
    A Real global group law is a contravariant functor $X$ from the full subcategory of augmented compact Lie groups spanned by augmented tori $\widehat{\mathbb{T}^n}$ to the category of $RO(C_2)$-graded rings along with an element $e \in X(\widehat{\mathbb{T}})$ of degree $- \rho$ such that the following condition is satisfied. For every split surjective Real character $\hat{V} : \widehat{\mathbb{T}^n} \rightarrow \widehat{\mathbb{T}}$ (which restricts to a split surjective character $V:{\mathbb{T}^n} \rightarrow {\mathbb{T}}$ having the same kernel as $\hat{V}$), define the augmented kernel as $\widehat{ker({{V}})}\coloneqq ker(\hat{V})\rtimes C_2 \cong \widehat{\mathbb{T}^{n-1}}$, then the sequence \[ 0 \rightarrow X(\widehat{\mathbb{T}^n}) \xrightarrow{{\hat{V}}^* e \cdot -} X(\widehat{\mathbb{T}^n}) \xrightarrow{res^{\widehat{\mathbb{T}^n}}_{\widehat{ker(V)}}} X(\widehat{ker(V)}) \rightarrow 0 \] is exact.
\end{definition}

\begin{remark}
    Note because of Lemma \ref{lemma:short-exact-sequence}, a Real global group laws contains more data than a global group law, as opposed to the case of equivariant formal group laws discussed in in Remark \ref{remark:Real FGL vs FGL}.
\end{remark}

\begin{proposition}\label{Real global FGL for Real globally oriented spectrum}
    If $E$ is a homotopy commutative Real globally oriented Real global ring spectrum, then the $RO(C_2)$-graded homotopy groups $E^\bigstar_{\widehat{\mathbb{T}^n}}$ assemble to a Real global group law.
\end{proposition}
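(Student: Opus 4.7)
The plan is to construct the required short exact sequence for each split surjective Real character by pulling back a single cofiber sequence of $\widehat{\mathbb{T}}$-spaces and applying the induction isomorphism. The multiplicative conversion from the Euler class $a_{\hat\tau}$ to the distinguished element $e$ is supplied by the Real global orientation.

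First I construct $e$. Let $u \in \pi^{\widehat{\mathbb{T}}}_{\rho - \hat\tau}(E_{\widehat{\mathbb{T}}})$ denote the Real global orientation and let $a_{\hat\tau} \in \pi^{\widehat{\mathbb{T}}}_{-\hat\tau}(E_{\widehat{\mathbb{T}}})$ denote the Euler class represented by the inclusion $S^0 \hookrightarrow S^{\hat\tau}$. Set $e := u^{-1} \cdot a_{\hat\tau}$, which lies in $\pi^{\widehat{\mathbb{T}}}_{-\rho}(E_{\widehat{\mathbb{T}}})$. Contravariant functoriality of the assignment $\widehat{\mathbb{T}^n} \mapsto \pi^{\widehat{\mathbb{T}^n}}_\bigstar(E_{\widehat{\mathbb{T}^n}})$ in augmented homomorphisms is immediate from the pullback functors $\eta^*$ combined with Proposition \ref{prop:RO(C_2)-grading-is-well-defined}.

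Now fix a split surjective Real character $\hat V: \widehat{\mathbb{T}^n} \to \widehat{\mathbb{T}}$. The underlying torus homomorphism $V$ splits, so $\mathbb{T}^n \cong \ker V \times \mathbb{T}$; since $C_2$ acts by inversion on each factor separately, this upgrades to an augmented isomorphism $\widehat{\mathbb{T}^n} \cong \widehat{\ker V} \times_{C_2} \widehat{\mathbb{T}}$ under which $\hat V$ becomes the second projection. The first projection then furnishes an augmented retraction $r: \widehat{\mathbb{T}^n} \to \widehat{\ker V}$ of the canonical inclusion, and $\hat V^{-1}(C_2) = \widehat{\ker V}$ inside $\widehat{\mathbb{T}^n}$. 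Pulling the cofiber sequence $(\widehat{\mathbb{T}}/C_2)_+ \to S^0 \to S^{\hat\tau}$ back along $\hat V$ therefore produces a cofiber sequence $(\widehat{\mathbb{T}^n}/\widehat{\ker V})_+ \to S^0 \to S^{\hat V^*\hat\tau}$ of $\widehat{\mathbb{T}^n}$-spaces. Smashing with $E_{\widehat{\mathbb{T}^n}}$ and applying $\pi^{\widehat{\mathbb{T}^n}}_\bigstar$ yields a long exact sequence; Corollary \ref{corollary HauRemark3.3}, applied with $X = S^0$, $G = \widehat{\ker V}$, and $\hat K = \widehat{\mathbb{T}}$, identifies $\pi^{\widehat{\mathbb{T}^n}}_\bigstar((\widehat{\mathbb{T}^n}/\widehat{\ker V})_+ \wedge E_{\widehat{\mathbb{T}^n}})$ with $\pi^{\widehat{\ker V}}_\bigstar(E_{\widehat{\ker V}})$ in such a way that the map induced from $\pi^{\widehat{\mathbb{T}^n}}_\bigstar(E_{\widehat{\mathbb{T}^n}})$ becomes the ordinary restriction. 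The middle map is multiplication by $a_{\hat V^*\hat\tau} = \hat V^* a_{\hat\tau}$, and rescaling by the unit $\hat V^* u^{-1}$ converts this into multiplication by $\hat V^* e$. The augmented retraction $r$ supplies a section of restriction, so restriction is surjective; by exactness, the connecting map into $\pi^{\widehat{\mathbb{T}^n}}_{\bullet - \rho}(E_{\widehat{\mathbb{T}^n}})$ vanishes, giving injectivity of multiplication by $\hat V^* e$ and hence the claimed short exact sequence.

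The main obstacle will be the augmented bookkeeping: verifying the augmented isomorphism $\widehat{\mathbb{T}^n} \cong \widehat{\ker V} \times_{C_2} \widehat{\mathbb{T}}$ and that the first projection really is an augmented group homomorphism (both of which rest on the observation that $C_2$-inversion respects the torus product decomposition), checking that $\hat V^{-1}(C_2)$ matches $\widehat{\ker V}$ under this identification, and confirming that Corollary \ref{corollary HauRemark3.3} identifies the map induced by the collapse $(\widehat{\mathbb{T}^n}/\widehat{\ker V})_+ \to S^0$ with the plain restriction of equivariant homotopy groups along $\widehat{\ker V} \hookrightarrow \widehat{\mathbb{T}^n}$. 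After these identifications, the rest of the argument is formal.
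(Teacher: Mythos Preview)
Your argument follows the paper's approach exactly: pull back the cofiber sequence $(\widehat{\mathbb{T}}/C_2)_+ \to S^0 \to S^{\hat\tau}$ along $\hat V$, identify the resulting long exact sequence with the desired one via the Real orientation and the induction isomorphism, and split it using the retraction $\hat r$ obtained by applying $-\rtimes C_2$ to the torus-level retraction. One small slip: you write ``smashing with $E_{\widehat{\mathbb{T}^n}}$ and applying $\pi^{\widehat{\mathbb{T}^n}}_\bigstar$,'' but taken literally this produces the homology long exact sequence, in which the map at the $(\widehat{\mathbb{T}^n}/\widehat{\ker V})_+$ term goes \emph{into} $\pi^{\widehat{\mathbb{T}^n}}_\bigstar(E)$ and is the transfer (with a Wirthm\"uller shift, since $\widehat{\mathbb{T}^n}/\widehat{\ker V}$ is one-dimensional), not the restriction coming \emph{out}. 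Corollary~\ref{corollary HauRemark3.3} is stated for $E$-cohomology, and that is what you should use: apply $E^\bigstar_{\widehat{\mathbb{T}^n}}(-)$ to the cofiber sequence, exactly as the paper does, and then your identification of the relevant map with restriction is correct.
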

\begin{proof}
    Consider the cofiber sequence
    \begin{equation}\label{canonical cofibseq of hatT}
        \widehat{\mathbb{T}}/{C_2}\to S^0\to S^{\widehat{\tau}}
    \end{equation}
    with $\widehat{\tau}$ the canonical Real $\widehat{\mathbb{T}}$-representation. Let $\hat{V}: \widehat{A}\to \widehat{\mathbb{T}}$ be a split surjective augmented map with $\widehat{A}=A\rtimes C_2$ for some compact abelian Lie group $A$. Let $V:A\to \mathbb{T}$ be the restriction of $\hat{V}$. Then $V$ is still split surjective, since if we let $\hat{s}:\widehat{\mathbb{T}}\to \widehat{A}$ to be a splitting of $\hat{V}$, we know $\hat{s}$ must be augmented, thus restricts to a splitting of $V$ along $\mathbb{T}\to \widehat{\mathbb{T}}$. We call $s=\hat{s}|_{\mathbb{T}}$. Now we have the following split short exact sequence of abelian groups
    \[\begin{tikzcd}
        0\to ker(V)\xrightarrow{i} A\xrightarrow{V} {\mathbb{T}}\to 0
    \end{tikzcd}\]
    where $i$ is the inclusion. Then we have the retraction $r:A\to ker(V)$ satisfying $r\circ i=1$. Applying $-\rtimes C_2$ gives us a retraction $\hat{r}:\widehat{A}\to \widehat{ker(V)}$ with $\hat{r}\circ\hat{i}=1$.
    
    Now pull back the cofiber sequence of \eqref{canonical cofibseq of hatT} using $\hat{V}:\hat{A}\to \widehat{\mathbb{T}}$, we get a cofiber sequence of $\hat{A}$-spaces
    \begin{equation*}
        \widehat{A}/{\widehat{ker(V)}}_+\to S^0\to S^{\hat{V}}
    \end{equation*}
    which gives rise to a long exact sequence
    \[ \cdots \rightarrow E^\bigstar_{\widehat{A}} (S^{\hat{V}}) \rightarrow E^\bigstar_{\widehat{A}}(S^0) \rightarrow E^\bigstar_{\widehat{A}}( \widehat{A}/{\widehat{ker(V)}}_+) \rightarrow\cdots. \]
    From the Real global orientation we have $E^\bigstar_{\widehat{A}} (S^{\hat{V}})\cong E^\bigstar_{\widehat{A}} (S^{\rho})$ and the first displayed map can be identified with multiplication by $e_{\hat{V}}$. The second displayed map can be identified with $res^{\widehat{A}}_{\widehat{ker(V)}}$ using $E^\bigstar_{\widehat{A}}(\widehat{A}/{\widehat{ker(V)}}_+)\cong E^\bigstar_{\widehat{ker(V)}}(S^0)$. Because of the retraction $\hat{r}$, we deduce that $res^{\widehat{A}}_{\widehat{ker(V)}}$ is surjective and we have a short exact sequence. Specializing to $A={\mathbb{T}^n}$ proves the result.
\end{proof}

\begin{construction}
    Given a Real global group law $X$, we can construct a global group law $\overline{X}$ as follows. For a torus $A \cong \mathbb{T}^n$, we define a $\mathbb{Z}$-graded ring $\overline{X}(A)$ via the formula
    \[
    \overline{X}(A)_{2n} = X(\hat{A})_{\rho n}
    \]
    \[
    \overline{X}(A)_{2n+1} = 0.
    \]
    In other words, the value of $\overline{X}$ at $A$ is the subring $\overline{X}(A) \subset X(\hat{A})$ consisting of elements of degree $n\rho$ for some $n \in \mathbb{Z}$. The euler class $e \in X(\hat{\mathbb{T}})$ has degree $-\rho$, so we have $\bar{e} \in \overline{X}(\mathbb{T})$.
\end{construction}

\begin{proposition}
\label{prop:overline-functor}
    The construction $(X,e) \mapsto (\overline{X},\bar{e})$ specifies a functor from Real global group laws to global group laws.
\end{proposition}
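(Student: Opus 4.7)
The plan is to treat this as bookkeeping: verify that the subring assignment $A \mapsto \bigoplus_n X(\hat{A})_{n\rho}$, relabeled so that degree $n\rho$ becomes degree $2n$, defines a global group law, and that the construction extends to morphisms. First I would check that $\overline{X}(A)$ is a well-defined $\mathbb{Z}$-graded ring: closure under multiplication holds because $n\rho + m\rho = (n+m)\rho$, and the odd-degree pieces are declared to be zero. For contravariance in tori, the key input is that $A \mapsto \hat{A} = A \rtimes C_2$ (with $C_2$ acting by inversion) is functorial on compact abelian Lie groups, since any homomorphism $f : A \to B$ of abelian groups intertwines inversion and hence extends to an augmented map $f \rtimes \mathrm{id} : \hat{A} \to \hat{B}$. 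The functoriality of $X$ together with the $RO(C_2)$-grading-preservation of the restriction maps then produces $\mathbb{Z}$-graded ring homomorphisms $\overline{X}(f) : \overline{X}(B) \to \overline{X}(A)$.

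Next I would produce $\bar{e}$ and verify the exact sequence axiom of Definition \ref{def:GGL}. Given a split surjection $V : \mathbb{T}^n \to \mathbb{T}$ with splitting $s$, applying $-\rtimes C_2$ yields an augmented split surjection $\hat{V} = V \rtimes \mathrm{id}$ with splitting $s \rtimes \mathrm{id}$, and with kernel precisely $\widehat{ker(V)}$, as already observed in Definition \ref{def:Real-GGL}. The Real global group law axiom applied to $\hat{V}$ then yields an $RO(C_2)$-graded short exact sequence
\[
0 \to X(\widehat{\mathbb{T}^n}) \xrightarrow{\hat{V}^* e \cdot -} X(\widehat{\mathbb{T}^n}) \xrightarrow{\mathrm{res}} X(\widehat{ker(V)}) \to 0,
\]
which, being $RO(C_2)$-graded exact, is by definition exact in each degree. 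Since $\hat{V}^* e$ has degree $-\rho$, multiplication by it sends the $n\rho$-piece of the middle term into the $(n+1)\rho$-piece, and restriction preserves $RO(C_2)$-grading by \Cref{prop:RO(C_2)-grading-is-well-defined}-style naturality. Extracting the $n\rho$-components for each $n$ and relabeling them as degrees $2n$ reproduces the exact sequence of Definition \ref{def:GGL} for $\overline{X}(\mathbb{T}^n)$, with $\bar{e}$ landing in the correct degree $-2$.

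For functoriality of $(X,e) \mapsto (\overline{X},\bar{e})$, a morphism of Real global group laws should be a natural transformation $\varphi : X \to Y$ of functors into $RO(C_2)$-graded rings with $\varphi(e_X) = e_Y$. Each component $\varphi_{\hat{A}}$ is already $RO(C_2)$-graded, hence restricts to a $\mathbb{Z}$-graded ring map $\overline{X}(A) \to \overline{Y}(A)$; naturality and the identity $\varphi(e_X) = e_Y$ immediately transfer to the corresponding statements for $\bar{e}$. I do not expect a substantial obstacle here; the only points worth being careful about are that $-\rtimes C_2$ genuinely defines a functor (which uses abelianness essentially, since otherwise $f$ would not commute with inversion), and that exactness really does pass to the $n\rho$-graded subcomplex, which is automatic because $RO(C_2)$-graded exactness is by definition degreewise in every element of $RO(C_2)$.
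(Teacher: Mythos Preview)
Your proposal is correct and follows essentially the same approach as the paper: apply $-\rtimes C_2$ to a split surjective character, invoke the Real global group law axiom for $\hat{V}$, and restrict the resulting $RO(C_2)$-graded short exact sequence to its $\rho*$-graded part; functoriality is then immediate from the fact that passage to $\rho*$-subrings is functorial in $RO(C_2)$-graded ring maps. You include more detail than the paper (explicitly checking that $\overline{X}(A)$ is a graded ring and that $-\rtimes C_2$ is functorial on abelian groups), but the argument is the same.
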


\begin{proof}
    First, we need to show that $(\overline{X},\bar{e})$ is in fact a global group law. Suppose $V:A \to \mathbb{T}$ is a split surjective character. Then applying $(-)\rtimes C_2$ determines a split surjective Real characther $\hat{V} : \hat{A} \to \hat{\mathbb{T}}$. The Real global group law then determines a short exact sequence 
    \[ \begin{tikzcd} 0 \ar[r] & X(\widehat{\mathbb{T}^n}) \ar[r,"\hat{V}^* e"] & X(\widehat{\mathbb{T}^n}) \ar[r,"res^{\widehat{\mathbb{T}^n}}_{\widehat{ker(V)}}"] &  X(\widehat{ker({V})}) \ar[r] & 0.
    \end{tikzcd} \] 
    Since $\widehat{ker({V})} = \text{ker}(V) \rtimes C_2$, and since restriction maps preserve $RO(C_2)$-grading, the short exact sequence above restricts to a short exact sequence on $\rho *$-graded subrings, which is of the form \[ \begin{tikzcd} 0 \ar[r] & \overline{X}(\mathbb{T}^n) \ar[r,"V^*\bar{e}"] & \overline{X}(\mathbb{T}^n) \ar[r,"res^{\mathbb{T}^n}_{ker(V)}"] &  \overline{X}(ker(V)) \ar[r] &  0. \end{tikzcd}  \] We conclude that $\overline{X}$ forms a global group law.

    That the construction $(X,e) \mapsto (\overline{X},\bar{e})$ is functorial in maps of Real global group laws follows from the fact that restriction to $\rho*$-graded subrings is functorial in maps of $RO(C_2)$-graded rings.
\end{proof}

\begin{remark}
    One might wonder whether or not there is a similar construction producing a Real global group law from a global group law, and in particular whether there is an adjoint to the above construction. After all, the data of a formal group law is equivalent to the data of a Real formal group law (that is, a formal group law together with the action of $C_2$ specified by formal inversion). As a first obstruction, we must restrict to global group laws with values in $\Z$-graded rings which are concentrated in even degrees (for example, $\underline{\pi_*}(\mathbf{MU})$ \cite{Lof73} \cite{Com96}, $\underline{\pi_*}(\mathbf{KU})$ \cite{Ati68}, and the global group law $\underline{\pi_*}(\mathbf{MU}) \otimes_{MU^*} BP^*$ alternatively constructed from $BP_A^*$ \cite{May98}, see also \cite{Wis24} for a discussion). This is because we wish to postcompose with the functor from $\Z$-graded rings concentrated in even degrees to $RO(C_2)$-graded rings given by sending $R^*$ to the $\hat{R}$ defined by $\hat{R}^{\rho n} = R^{2n}$ and $\hat{R}^{\bigstar} = 0$ otherwise.
    
    Next, observe that there is a functor from the full subcategory of quasi-abelian augmented groups spanned by augmented tori to the full subcategory of abelian groups spanned by tori, which is a left inverse to the functor $\mathbb{T}^n \mapsto \widehat{\mathbb{T}^n} := \mathbb{T}^n \rtimes C_2$. This functor is given by $\widehat{\mathbb{T}^n} \mapsto Ker(\widehat{\mathbb{T}^n} \rightarrow C_2)$. Additionally, there is a functor from $\Z$-graded rings concentrated in even degrees to $RO(C_2)$-graded rings given by sending $R^*$ to the ring $R^{\bigstar}$ defined by $R^{* \rho} = R^{2*}$ and $R^{\bigstar} = 0$ otherwise. Given a global group law with values in graded rings which are concentrated in even degrees, we obtain a Real global group law by respectively precomposing and postcomposing with the two functors described above. The resulting functor is denoted $Y \mapsto \overline{Y}$.

    In fact it does not seem likely that $X \mapsto \overline{X}$ is left adjoint to $Y \mapsto \overline{Y}$, essentially due to the failure of $\mathbb{T}^n \mapsto \widehat{\mathbb{T}^n}$ to be full. In particular, it seems that Real global group laws actually contain much more information than global group laws. This greatly contrasts the ordinary case, for which the formal group law data arising from a Real-oriented $C_2$-spectrum is essentially the same as the data arising from a complex oriented spectrum (modulo $RO(C_2)$ versus $\Z$ grading issues).
\end{remark}

Working $2$-locally, one might expect the existence of a homotopy commutative Real global spectrum $BP \mathbb{R}$, so that whenever $\eta : G \rightarrow C_2$ is trivial, the resulting $G$-spectrum $BP \mathbb{R}_\eta$ is just the $BP_G$ considered in \cite{May98} and \cite{Wis24}. Indeed, it seems likely that such an object exists, and possesses a Real global orientation. However, a serious treatment of such a matter would take us too far afield.

\section{Construction and properties of $\mathbf{MR}$}

In this section we introduce the $C_2$-global spectrum $\mathbf{MR}$ and its periodic version $\mathbf{MRP}$. We show that $\mathbf{MR}$ is canonically Real globally oriented following \cite{Sch14}.

\subsection{Construction of $\mathbf{MR}$}

The unitary spectrum models for $\mathbf{MR}$ and $\mathbf{MRP}$ are constructed in \cite{Sch14}. To avoid confusion, we will write $\mathbf{MR}^U$ and $\mathbf{MRP}^U$ for the unitary versions (where $\mathbf{MR}^U$ is Schwede's $\mathbf{MR}^{[0]}$ and $\mathbf{MRP}^U$ is his $\mathbf{MR}$). There is an underlying functor $u:Sp^U\to Sp$ from unitary spectra to orthogonal spectra, which actually lands in $Sp^{C_2}$ where the $C_2$-action comes from complex conjugation. 

More concretely, we define orthogonal $C_2$-spectra $\mathbf{MR}$ and $\mathbf{MRP}$ as follows, which then can be regarded as objects in $\mathcal{GH}_{C_2}$. For $V$ an inner product space over the real numbers, let $V_{\C}=V\otimes_{\R}\C$ to be its complexification. $V_{\C}$ has a natural $C_2$-action where it acts on the $\C$ factor by complex conjugation. 

Let
\[ \mathbf{BR}(V) =  Gr_{dim_\C (V_{\C})}(V_{\C}^2) \]
be the complex Grassmanian, where $V_{\C}^2=V_{\C}\oplus V_{\C}$. It has a $C_2$-action from complex conjugation on $V_{\C}$. Over this space there is the tautological hermitian vector bundle and we use $Th(\mathbf{BR}(V))$ to denote its Thom space. We define
\[
\mathbf{MR}(V)=Map_*(S^{iV},Th(\mathbf{BR}(V)))=\Omega^{iV}Th(\mathbf{BR}(V)).
\]
Here $C_2$ acts on $iV$ by $-1$ and acts on the entire function space by conjugation action on functions. Now let $W$ be another inner product space. We have to define $C_2$-equivariant maps
\[
S^W\wedge \mathbf{MR}(V)\to \mathbf{MR}(V\oplus W)
\]
By adjunction, it suffices to define a map
\[
S^{W_{\C}}\wedge Th(\mathbf{BR}(V))\to Th(\mathbf{BR}(V\oplus W)).
\]
This is defined by $w\wedge (U,x)\mapsto (\kappa_{V,W}(U\oplus W_{\C}\oplus 0),\kappa_{V,W}(x,w,0))$, where $\kappa_{V,W}:V_{\C}^2\oplus W_{\C}^2\to (V\oplus W)_{\C}^2$ is the linear isometry given by $\kappa_{V,W}(v,v',w,w')=(v,w,v',w')$. 

$\mathbf{MR}$ can be equipped with $C_2$-equivariant maps $\mathbf{MR}(V)\wedge \mathbf{MR}(W)\to \mathbf{MR}(V\oplus W)$ and $S^V\to \mathbf{MR}(V)$ making it an commutative ring in orthogonal $C_2$-spectra.

\begin{definition}\label{def: MR}
    The orthogonal $C_2$-spectrum $\mathbf{MR}$ is the one defined above. It has the structure of a commutative ring in orthogonal $C_2$-spectra. We also use $\mathbf{MR}$ to denote its image in $\mathcal{GH}_{C_2}$.
\end{definition}

The above construction actually produces the $0$-th summand of a family of spectra $\mathbf{MR}^{[n]}$ for any $n\in \mathbb{Z}$. Let $V$ be an inner product space, we define the complex Grassmanian
\[ \mathbf{BR}^{[n]}(V) =  Gr_{dim_\C (V_{\C})+n}(V_{\C}^2) \] and we use $Th(\mathbf{BR}^{[n]}(V))$ to denote the Thom space of the tautological bundle of $(dim_{\C}V_{\C}+n)$-dimensional planes over it. Similar structure maps can be defined as in the $n=0$ case, and we get an orthogonal $C_2$-spectrum $\mathbf{MR}^{[n]}$. We define
\[ \mathbf{MRP} = \bigvee_{n \in \Z} \mathbf{MR}^{ \left[ n \right] } \]
which can be equipped with the structure of a commutative ring in orthogonal $C_2$-spectrum.

\begin{remark}
    The underlying orthogonal $C_2$-spectrum of Schwede's unitary $\mathbf{MRP}^U$ in \cite{Sch14} is given by
    \[
    \mathbf{MRP}^U(V)=\Omega^{iV}(\bigvee_{n\in\mathbb{Z}}Th(\mathbf{BR}^{[n]}(V))).
    \]
    This is $C_2$-globaly equivalent but not equal to the $\mathbf{MRP}$ defined above.
\end{remark}

\begin{definition}\label{def: MR_eta}
Let $\eta:G\to C_2$ be an augmented compact abelian Lie group. Let $\eta^*:Sp^{C_2}\to Sp^G$ be the pullback functor giving an orthogonal $C_2$-spectrum a $G$-action via $\eta$. We define
\[
M\R_{\eta}=\eta^*(\mathbf{MR})\text{ and }M\R P_{\eta}=\eta^*(\mathbf{MRP}).
\]
\end{definition}

The point-set level pullback functor $\eta^*:Sp^{C_2}\to Sp^G$ is strong symmetric monoidal, since the smash product in both categories are defined by first taking the non-equivariant smash product and give it the diagonal actions. Thus $M\R_{\eta}$ and $M\R P_{\eta}$ automatically become commutative rings in $Sp^G$. We summarize as follows
\begin{proposition}
\label{prop:MRP_ring_structure_restricts}
	Let $\eta : G \rightarrow C_2$ be an augmented compact Lie group. The commutative ring structure on $\mathbf{MR}$ induces commutative ring structure on the orthogonal $G$-spectra $M \mathbb{R}_{\eta}$. These commutative ring structures are compatible with augmented group homomorphisms in the following sense: Let $f:H\to G$ be an augmented map. Then $Res_f(M\R_{\eta})=M\R_{\eta\circ f}$ as commutative ring in orthogonal $H$-spectra. Similar results are true for $\mathbf{MRP}$ and $M\R P_{\eta}$.
\end{proposition}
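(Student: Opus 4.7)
The plan is to leverage the observation, already made in the paragraph preceding Definition \ref{def: MR_eta}, that for any augmented compact Lie group $\eta : G \to C_2$ the point-set pullback functor $\eta^* : Sp^{C_2} \to Sp^G$ is strong symmetric monoidal, together with the obvious functoriality $(\eta \circ f)^* = f^* \circ \eta^*$ of pullback along composition of augmented maps.

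The first step is to note that $\mathbf{MR}$ (respectively $\mathbf{MRP}$) was constructed with explicit multiplication maps $\mathbf{MR}(V) \wedge \mathbf{MR}(W) \to \mathbf{MR}(V \oplus W)$ and unit maps $S^V \to \mathbf{MR}(V)$ making it a commutative monoid in orthogonal $C_2$-spectra with respect to the smash product. Thus $\mathbf{MR}, \mathbf{MRP} \in \mathrm{CAlg}(Sp^{C_2})$. Since a strong symmetric monoidal functor preserves commutative monoids, applying $\eta^*$ immediately yields commutative rings $M\R_\eta = \eta^*(\mathbf{MR})$ and $M\R P_\eta = \eta^*(\mathbf{MRP})$ in orthogonal $G$-spectra. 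Concretely, the structure maps on $M\R_\eta$ are obtained from those on $\mathbf{MR}$ by simply pulling back the $C_2$-action along $\eta$ to a $G$-action, which leaves the underlying maps of spectra unchanged.

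For the compatibility statement, fix an augmented homomorphism $f : H \to G$. The restriction functor $\mathrm{Res}_f : Sp^G \to Sp^H$ is itself strong symmetric monoidal, as it is again simply pullback of the group action along $f$ and the smash product on both categories is the underlying smash product equipped with the diagonal action. At the point-set level, one has the literal equality of functors
\[ \mathrm{Res}_f \circ \eta^* = (\eta \circ f)^* : Sp^{C_2} \longrightarrow Sp^H, \]
since both amount to pulling back the $C_2$-action along the composite $\eta \circ f$. Because each functor in this equality is strong symmetric monoidal with the structure isomorphisms being identities on underlying spectra, the equality persists after passing to commutative algebra objects. Applying this to $\mathbf{MR}$ and $\mathbf{MRP}$ yields the asserted equalities $\mathrm{Res}_f(M\R_\eta) = M\R_{\eta \circ f}$ and $\mathrm{Res}_f(M\R P_\eta) = M\R P_{\eta \circ f}$ as commutative rings in orthogonal $H$-spectra.

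There is no genuine obstacle here; the only thing to be careful about is the sanity check that the symmetric monoidal structures on $Sp^{C_2}$, $Sp^G$, and $Sp^H$ are all defined in the same way — namely as the underlying orthogonal smash product endowed with the diagonal group action — so that the strong symmetric monoidal structure isomorphisms for $\eta^*$ and $\mathrm{Res}_f$ are literally identities, and no coherence diagram needs to be invoked to obtain an honest equality (rather than merely an isomorphism) of commutative ring structures. Granting this, the proposition reduces to unwinding definitions.
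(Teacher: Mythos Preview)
Your proposal is correct and matches the paper's approach exactly: the paper does not give a separate proof of this proposition but rather presents it as a summary of the preceding paragraph, which observes that $\eta^*$ is strong symmetric monoidal (since the smash product is the underlying smash product with diagonal action) and hence carries commutative rings to commutative rings. Your additional remark on the literal equality $\mathrm{Res}_f \circ \eta^* = (\eta \circ f)^*$ makes explicit the compatibility clause that the paper leaves implicit.
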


The $G$-homotopy groups of a unitary spectrum are isomorphic to the $G$-homotopy groups of the underlying orthogonal $C_2$-spectrum via a cofinalty argument. Thus, as a corollary of \cite[Propostion 7.8]{Sch14}, we get a $C_2$-global equivalence $\mathbf{MR}^{[-n]}\simeq S^{n\rho}\wedge\mathbf{MR}^{[0]}$, which implies the following.

\begin{proposition}\label{prop:MRPeta splits}
	For any augmented compact Lie group $\eta : G \rightarrow C_2$, we have a canonical equivalence $M \mathbb{R} P_{\eta} \simeq \vee_{n \in \mathbb{Z}} S^{n \rho} \wedge M \mathbb{R}_{\eta}$.
\end{proposition}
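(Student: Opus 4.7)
The plan is to deduce the splitting for $M\mathbb{R}P_\eta$ directly from the corresponding Real global-level splitting of $\mathbf{MRP}$ by applying the pullback functor $\eta^*$. The key input is the $C_2$-global equivalence $\mathbf{MR}^{[-n]} \simeq S^{n\rho} \wedge \mathbf{MR}$ cited from \cite[Proposition 7.8]{Sch14} (equivalently, $\mathbf{MR}^{[n]} \simeq S^{-n\rho} \wedge \mathbf{MR}$) for each $n \in \mathbb{Z}$. Combined with the point-set definition $\mathbf{MRP} = \bigvee_{n \in \mathbb{Z}} \mathbf{MR}^{[n]}$, this already gives the desired splitting at the Real global level, and the problem reduces to transporting everything along $\eta^*$.

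For this, I would verify three formal properties of the pullback functor $\eta^* : \mathsf{Sp}^{C_2} \to \mathsf{Sp}^G$. First, $\eta^*$ is defined levelwise and so commutes with arbitrary colimits; in particular it preserves wedge sums. Second, it takes the representation sphere $S^{n\rho}$ to $S^{n\,\eta^*\rho}$, which is exactly the $G$-representation sphere denoted $S^{n\rho}$ under the implicit abuse of notation introduced in Section~2. Third, by the very definition of a $C_2$-global equivalence, $\eta^*$ induces an isomorphism on $G$-equivariant homotopy groups for every augmented compact Lie group $\eta$, and hence carries $C_2$-global equivalences to stable $G$-equivalences.

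Chaining these observations together gives
\[
M\mathbb{R}P_\eta \;=\; \eta^*\!\left(\bigvee_{n \in \mathbb{Z}} \mathbf{MR}^{[n]}\right) \;\cong\; \bigvee_{n \in \mathbb{Z}} \eta^*\bigl(\mathbf{MR}^{[n]}\bigr) \;\simeq\; \bigvee_{n \in \mathbb{Z}} S^{-n\rho} \wedge M\mathbb{R}_\eta,
\]
and reindexing $n \mapsto -n$ yields the stated equivalence. The argument is almost entirely formal once the Schwede-level splitting is in hand; the only non-trivial ingredient is the cited equivalence $\mathbf{MR}^{[-n]} \simeq S^{n\rho} \wedge \mathbf{MR}$, so the main ``obstacle'' is just ensuring we have correctly imported this result from the unpublished draft. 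Canonicity of the resulting equivalence follows from the naturality of the Schwede-level splitting together with the functoriality of $\eta^*$ in $\eta$.
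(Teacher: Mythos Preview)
Your proposal is correct and follows essentially the same approach as the paper: the paper deduces the proposition directly from the $C_2$-global equivalence $\mathbf{MR}^{[-n]}\simeq S^{n\rho}\wedge\mathbf{MR}^{[0]}$ (cited from \cite[Proposition~7.8]{Sch14}) together with the definition $\mathbf{MRP}=\bigvee_{n}\mathbf{MR}^{[n]}$, and then pulls back along $\eta^*$. Your write-up is slightly more explicit about the formal properties of $\eta^*$ (preservation of wedges, representation spheres, and equivalences), but the content is the same.
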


\begin{remark}
\label{rem:weak-equivalences-are-the-same}
    The cofinalty argument mentioned in the last paragraph goes as follows. Let $\eta:G\to C_2$. In \cite{Sch14}, Schwede defined the $G$-homotopy groups of $\mathbf{MR}^U$ to be
    \[ \pi^G_0(\mathbf{MR}^U) := \textrm{colim}_{\hat{V} \in \mathcal{U}_{\eta}} [S^{\hat{V}},\mathbf{MR}^U(\hat{V})]^G \]
    where $\mathcal{U}_{\eta}$ is a complete Real $\eta$-universe. This is isomorphic to the $G$-homotopy group of the $G$-spectrum $M\mathbb{R}_{\eta}$ (thus $\pi^G_0(\eta^* \mathbf{MR})$) as follows. By definition, let $\mathcal{U}_G$ be a complete universe for orthogonal $G$-representations, then
    \begin{align*}
    \pi^G_0(M\mathbb{R}_{\eta}) :=& \textrm{colim}_{W \in \mathcal{U}_G} [S^{W},M\mathbb{R}_{\eta}(W)]^G\\
    \cong &\textrm{colim}_{W \in \mathcal{U}_G} [S^{W\otimes_{\R}\C},\mathbf{MR}^U(W\otimes_{\R}\C)].
    \end{align*}
    To see that this two groups are isomorphic, we only need to show that the following association $\varphi$ from orthogonal $G$-representations to Real $\eta$-representations is cofinal. Let $V$ be an orthogonal $G$-representation, define
    \[
    \varphi(V)=V\otimes_{\R}\C.
    \]
    For $g\in G, v\in V$ and $z\in \C$, let $g\cdot (v,z)=(g\cdot v,\eta(g)\cdot z)$, where we uses the $C_2\cong Gal(\C/\R)$-action on $\C$. This makes $\varphi(V)$ into a Real $\eta$-representation. We prove that for any actual Real $\eta$-representation $\hat{V}$, it embeds into $\varphi(W)$ for some actual orthogonal $G$-representation $W$. 
    
    Let $V$ be the underlying complex $Ker(\eta)$-representation of $\hat{V}$, and $un(\hat{V})$ be the underlying orthogonal $G$-representation. Then we have
    \[
    res^{G}_{Ker(\eta)}(\varphi(un(\hat{V})))\cong V\oplus \bar{V}
    \]
    where $\bar{V}$ is the conjugate representation. Then we have $V=res^{G}_{Ker(\eta)}(\hat{V})\hookrightarrow res^{G}_{Ker(\eta)}(\varphi(un(\hat{V})))$. Applying $tr^{G}_{Ker(\eta)}$ on both sides gives $2\hat{V}\hookrightarrow 2\varphi(un(\hat{V}))$. So we can just take $W=un(\hat{V})$.
\end{remark}

From the compatibility of the commutative ring structures under pullbacks, we see that these restriction maps impart the homotopy groups \[ \pi^{\widehat{\mathbb{T}}^n}_*(M \mathbb{R} P_{\widehat{\mathbb{T}}^n}) \] and \[ \pi^{\widehat{\mathbb{T}}^n}_*(M \mathbb{R}_{\widehat{\mathbb{T}}^n}) \] with the structure of a contravariant functor from the full subcategory of augmented Lie groups spanned by the augmented tori to $\mathbb{Z}$-graded rings.

\begin{lemma}
    Let $\eta: G \rightarrow C_2$ be an augmented compact Lie group. Then the underlying orthogonal $ker(\eta)$-spectrum $M \R_{\eta\circ i}$ ($i:ker(\eta)\to G$ is the inclusion) is $MU_{ker(\eta)}$ as commutative rings. In particular, if $\eta$ is the trivial augmentation, then $M\R_{\eta}$ is the orthogonal $G$-spectrum $MU_G$ in \cite{Sch18}.
\end{lemma}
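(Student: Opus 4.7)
The plan is to first reduce the statement to the case of the trivial augmentation, and then compare the resulting orthogonal $H$-spectrum levelwise with Schwede's construction of $MU_H$.

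\textbf{Step 1: Reduction.} Observe that since $i$ lands in $\mathrm{ker}(\eta)$, the composite $\eta \circ i : \mathrm{ker}(\eta) \to C_2$ is the trivial augmentation. By Proposition~\ref{prop:MRP_ring_structure_restricts} applied to $f = i$, the underlying $\mathrm{ker}(\eta)$-spectrum of $M\R_\eta$ is precisely $M\R_{\eta \circ i}$ as a commutative ring. Thus it suffices to prove the ``in particular" clause: if $\epsilon : H \to C_2$ is the trivial augmentation on a compact Lie group $H$, then $M\R_\epsilon \cong MU_H$ as commutative rings in orthogonal $H$-spectra.

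\textbf{Step 2: Matching levels.} Unpacking Definition~\ref{def: MR}, the value of $\mathbf{MR}$ on an inner product space $V$ is
\[ \mathbf{MR}(V) = \Omega^{iV}\, Th\bigl( Gr_{\dim_\C V_\C}(V_\C^2)\bigr), \]
with the $C_2$-action arising from complex conjugation on the $\C$-factor of $V_\C = V \otimes_\R \C$. When we pull back along the trivial augmentation $\epsilon$, the $H$-action on $V_\C$ factors through the identity on $\C$, so $V_\C$ is just the usual complexification of the orthogonal $H$-representation $V$, and $\mathbf{BR}(V)$ becomes the standard complex Grassmannian $Gr^\C_{\dim_\C V_\C}(V_\C \oplus V_\C)$ with its induced $H$-action. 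This is exactly the formula used in \cite[Ch.~6]{Sch18} for $\mathbf{MU}(V)$, and the induced $H$-action on each level is identical. The structure maps of $\mathbf{MR}$, built out of the linear isometry $\kappa_{V,W}$, reduce to Schwede's Whitney-sum structure maps of $\mathbf{MU}$ once the $C_2$-twist is trivialized, so we obtain an identification of underlying orthogonal $H$-spectra $M\R_\epsilon \cong MU_H$.

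\textbf{Step 3: Matching ring structures.} The commutative ring structure on $\mathbf{MR}$ is defined using the obvious product pairing of Grassmannians together with $\kappa_{V,W}$ and $\Omega^{iV} \wedge \Omega^{iW} \to \Omega^{i(V \oplus W)}$; under trivialization of the $C_2$-action, these are exactly the multiplication and unit maps of $\mathbf{MU}$ in \cite[Ch.~6]{Sch18}. Naturality of pullback (Proposition~\ref{prop:MRP_ring_structure_restricts}) then gives the ring isomorphism $M\R_\epsilon \cong MU_H$, and combining with Step~1 proves the general statement.

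\textbf{Main obstacle.} The only real work is bookkeeping in Step~2: one must check that the specific model of $\mathbf{MU}$ in \cite{Sch18} agrees on the nose (not merely up to equivalence) with the underlying orthogonal spectrum of $\mathbf{MR}$ obtained by forgetting the $C_2$-action. This is a matter of reconciling conventions for complexification, for the choice of complementary $V_\C$-summand in $V_\C^2$, and for the form of the linear isometries defining the structure maps; all of these are parallel on the two sides, but verifying the equality (rather than merely equivalence) is the step where one must be most careful.
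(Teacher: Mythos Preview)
Your proposal is correct and follows essentially the same approach as the paper. The paper's justification (the paragraph immediately following the lemma) organizes your Step~1 reduction as a commutative square of categories $Sp^{C_2} \to Sp^G$, $Sp^{C_2} \to Sp$, etc., and then simply asserts that ``$\mathbf{MR}$ restricts to Schwede's $\mathbf{MU}$ by definition''; your Steps~2 and~3 spell out that last clause in more detail than the paper does, but the logic is the same.
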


Actually, we have the commutative diagram
\[\begin{tikzcd}
	{Sp^{C_2}} && {Sp^G} \\
	\\
	Sp && {Sp^{ker(\eta)}}.
	\arrow["{\eta^*}", from=1-1, to=1-3]
	\arrow["{res^{C_2}_e}", from=1-1, to=3-1]
	\arrow["{res^G_{ker(\eta)}}", from=1-3, to=3-3]
	\arrow["triv", from=3-1, to=3-3]
\end{tikzcd}\]
If the two categories on the left are equipped with the $C_2$-global and global model structures respectively, and the two categories on the right are equipped with the genuine model structures, all functors in this diagram are homotopical thus are already derived. $\mathbf{MR}$ restricts to Schwede's $\mathbf{MU}$ by definition, which pullback to $MU_{ker{\eta}}$ via the trivial action functor.

\begin{proposition}\label{prop: Real global orientation restricts to complex orientation}
    Suppose $E$ be a homotopy commutative Real global spectrum. A Real global orientation of $E$ restricts to a complex orientation of $u(E)\in \mathcal{GH}$, where $u$ is the derived functor of $res^{C_2}_e$.
\end{proposition}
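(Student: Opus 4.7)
The plan is to restrict the Real global orientation along the canonical inclusion $i : \mathbb{T} \hookrightarrow \widehat{\mathbb{T}}$ of the kernel of the augmentation, viewed as a morphism of augmented groups by equipping $\mathbb{T}$ with the trivial augmentation. Under $i^*$, the tautological $\widehat{\mathbb{T}}$-representation $\hat{\tau}$ restricts to the tautological complex $\mathbb{T}$-representation $\tau$, while $\rho$ (coming from $RO(C_2)$ via the augmentation of $\widehat{\mathbb{T}}$) restricts to the trivial two-dimensional $\mathbb{T}$-representation. Thus a class in degree $\rho - \hat{\tau}$ gets sent to a class in degree $2 - \tau$, which is the appropriate degree for a complex orientation in the global setting.

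First, I would identify $u(E)_{\mathbb{T}}$ with the pullback $i^*(E_{\widehat{\mathbb{T}}})$. Since $u$ is the derived functor of $\mathrm{res}^{C_2}_e$, its value at any compact Lie group $G$ is the underlying orthogonal spectrum of $E$ regiven trivial $G$-action. On the other hand, $E_{\widehat{\mathbb{T}}}$ is $E$ equipped with the $\widehat{\mathbb{T}}$-action that factors through the augmentation, so restricting along $i : \mathbb{T} \to \widehat{\mathbb{T}}$ also yields trivial $\mathbb{T}$-action. These identifications are compatible with the ring structures by the monoidality of $\eta^*$ recorded after Definition \ref{def: MR_eta}.

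Next, I would apply the restriction homomorphism $i^*$ to the Real global orientation $t \in \pi^{\widehat{\mathbb{T}}}_{\rho - \hat{\tau}}(E_{\widehat{\mathbb{T}}})$, producing $i^* t \in \pi^{\mathbb{T}}_{2 - \tau}(u(E)_{\mathbb{T}})$. Since $i^*$ is a ring homomorphism on $RO$-graded homotopy groups, the element $i^* t$ inherits the property of being a unit from $t$. The only remaining check is that $i^* t$ further restricts to the multiplicative identity in $\pi_0(u(E)_e)$. Here functoriality of restriction handles matters: the augmented composites $\{e\} \to \mathbb{T} \xrightarrow{i} \widehat{\mathbb{T}}$ and $\{e\} \to C_2 \to \widehat{\mathbb{T}}$ coincide, and by hypothesis $t$ restricts to $1$ along the latter, hence also along the former.

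The main potential obstacle is the careful identification of $u(E)_{\mathbb{T}}$ with $i^*(E_{\widehat{\mathbb{T}}})$ as homotopy commutative ring spectra, but this follows from the commutativity of the diagram of forgetful/restriction functors appearing just before Proposition \ref{prop: Real global orientation restricts to complex orientation}, together with the strong symmetric monoidality of $\mathrm{res}^{C_2}_e$ and $\eta^*$ at the point-set level. Everything else is formal manipulation of restriction maps and $RO$-gradings.
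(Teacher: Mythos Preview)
Your proof is correct and is essentially an expanded version of the paper's own argument, which reads in full: ``Simple diagram chase by restriction from $\hat{\mathbb{T}}$ to the subgroups $\mathbb{T}, C_2$ and $e$.'' You have simply made explicit the identifications and the diagram chase that the paper leaves to the reader, including the use of the commutative square of restriction functors appearing just before the proposition.
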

\begin{proof}
    Simple diagram chase by restriction from $\hat{\mathbb{T}}$ to the subgroups $\mathbb{T},C_2$ and $e$.
\end{proof}

Now we wish to extend our homotopy groups from $\Z$-graded to $RO(C_2)$-graded.

\begin{definition}
	Let $\alpha \in RO(C_2)$ and $\eta : G \rightarrow C_2$. We define the $RO(C_2)$-grading on $\pi^G_*(M \mathbb{R}_\eta)$ by \[ \pi^G_\alpha(M \mathbb{R}_\eta) := \pi^G_{\eta^* \alpha}(M \mathbb{R}_\eta). \] We use $\bigstar$ to denote this $RO(C_2)$-grading.
\end{definition}

Observe that the ring structure on $M \mathbb{R}_\eta$ makes $\pi^G_{\bigstar}(M \mathbb{R}_\eta)$ into an $RO(C_2)$-graded ring. The following is immediate from the definitions.

\begin{proposition}
	Let $f : H \rightarrow G$ be any morphism of augmented groups. Then the restriction $f_* : \pi^G_* (M \mathbb{R}_\eta) \rightarrow \pi^H_* (M \mathbb{R}_{\eta \circ f})$ canonically extends to a morphism of $RO(C_2)$-graded rings.
\end{proposition}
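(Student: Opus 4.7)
The plan is to verify two things about $f_\ast$: that it preserves the $RO(C_2)$-grading, and that it is multiplicative. Neither step requires any substantial input beyond unpacking definitions and citing facts already established in the paper.

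First I would recall that the $RO(C_2)$-grading on $\pi_\bigstar^G(M\mathbb{R}_\eta)$ is defined by $\pi_\alpha^G(M\mathbb{R}_\eta) := \pi_{\eta^\ast \alpha}^G(M\mathbb{R}_\eta)$, where $\eta^\ast : RO(C_2) \to RO(G)$ is pullback along $\eta$. Pullback of representations is contravariantly functorial, so for any morphism $f : H \to G$ of augmented groups one has $(\eta \circ f)^\ast = f^\ast \circ \eta^\ast$ as maps $RO(C_2) \to RO(H)$. Meanwhile, the usual restriction of an orthogonal $G$-spectrum $X$ along the underlying group homomorphism of $f$ induces, for every $V \in RO(G)$, a map $\pi_V^G(X) \to \pi_{f^\ast V}^H(\mathrm{res}_f X)$. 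Specializing to $X = M\mathbb{R}_\eta$ and $V = \eta^\ast \alpha$ and using the identification $\mathrm{res}_f M\mathbb{R}_\eta = M\mathbb{R}_{\eta \circ f}$ of Proposition \ref{prop:MRP_ring_structure_restricts}, we obtain
\[ f_\ast : \pi_{\eta^\ast \alpha}^G(M\mathbb{R}_\eta) \longrightarrow \pi_{f^\ast \eta^\ast \alpha}^H(M\mathbb{R}_{\eta \circ f}) = \pi_{(\eta \circ f)^\ast \alpha}^H(M\mathbb{R}_{\eta \circ f}), \]
which is precisely a map of $\alpha$-graded pieces under the $RO(C_2)$-grading convention. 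Thus $f_\ast$ is homogeneous of degree zero.

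For multiplicativity, I would appeal to the fact (also noted in Proposition \ref{prop:MRP_ring_structure_restricts} and the discussion preceding it) that the point-set level pullback functors in question are strong symmetric monoidal, because the smash product is defined on underlying spectra equipped with diagonal group actions, and both the multiplication $M\mathbb{R}_\eta \wedge M\mathbb{R}_\eta \to M\mathbb{R}_\eta$ and the unit $S^0 \to M\mathbb{R}_\eta$ are pulled back from $\mathbf{MR}$. Hence restriction along $f$ intertwines these multiplication and unit maps, and $f_\ast$ is a ring homomorphism degreewise with respect to the $RO(C_2)$-grading. Summing over $\alpha$ gives the desired morphism of $RO(C_2)$-graded rings.

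The main "obstacle" is really just bookkeeping: one must keep straight that the $RO(C_2)$-grading is pulled back through $\eta$, while restriction along $f$ a priori acts on $RO(G)$-grading, and then check that the two pullbacks compose correctly. Since this composition is literally the functoriality of $(-)^\ast$ on representation rings, there is no substantive work beyond invoking the previously established identification of underlying spectra and the strong symmetric monoidal property of $\eta^\ast$.
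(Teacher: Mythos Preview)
Your proof is correct and essentially matches the paper's approach: the paper declares the proposition ``immediate from the definitions'' without further argument, and what you have written is precisely the unpacking of that claim. The grading-compatibility portion of your argument is in fact exactly the content of Proposition~\ref{prop:RO(C_2)-grading-is-well-defined} stated earlier in the paper, and the multiplicativity portion is handled by Proposition~\ref{prop:MRP_ring_structure_restricts}, just as you cite.
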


\subsection{Orientability of $\mathbf{MR}$}

Let $\eta : G \rightarrow C_2$ be an augmented compact Lie group and $V$ a Real $\eta$-representation. Let $dim_{\C}V=n$, from the definition, we have an homeomorphism \[ S^{V^2} \cong Th(\mathbf{BR}^{ \left[ n \right] }(V)) \] which defines an element $\sigma_{\eta,V} \in \pi^G_{V} (M \mathbb{R} P_\eta)\cong \pi^G_{V}(\mathbf{MRP}^U)$. The following result proved in \cite{Sch14} shows that $\mathbf{MR}$ posses a Real global orientation $\sigma_{\hat{\mathbb{T}}, \hat{\tau}}$.

\begin{proposition}{\cite[Proposition 7.8]{Sch14}}
\label{PropertiesOfOrientationClasses}
    Let $\eta:G\to C_2$ and $\epsilon:K\to C_2$ be augmented compact Lie groups.
    \begin{enumerate}
        \item The suspension isomorphism on homology induced by an isomorphism $V \cong W$ of Real $\eta$-representations sends $\sigma_{\eta,W}$ to $\sigma_{\epsilon,V}$.
        \item $\sigma_{\eta,0}$ is the multiplicative unit.
        \item If $\alpha : \epsilon \rightarrow \eta$ is a homomorphism of augmented Lie groups, then $\alpha^*(\sigma_{\eta,V}) = \sigma_{\epsilon,\alpha^*(V)}$.
        \item For a Real $\eta$-representation $V$ and Real $\epsilon$-representation $W$, we have $\sigma_{\eta,V} \cdot \sigma_{\epsilon,W} = \sigma_{\eta \times_{C_2} \epsilon, V \oplus W}$.
        \item For every $k\in\mathbb{Z}$, \[ (-) \cdot \sigma_{\eta,V} : \pi_k^G (\mathbf{MRP}^U) \rightarrow \pi^G_{k+V}( \mathbf{MRP}^U) \] is an isomorphism.
    \end{enumerate}
\end{proposition}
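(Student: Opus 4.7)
The plan is to verify the five properties in order, exploiting the naturality of the Grassmannian model and the explicit formulas for the structure maps of $\mathbf{MR}$ and $\mathbf{MRP}$. For (1), I would observe that an isometric isomorphism $\phi : V \cong W$ of Real $\eta$-representations induces a $G$-equivariant isomorphism $\phi_\C : V_\C \cong W_\C$, and hence a $G$-equivariant homeomorphism of Grassmannians $\mathbf{BR}^{[n]}(V) \cong \mathbf{BR}^{[n]}(W)$ which takes tautological bundle to tautological bundle. Applying $Th(-)$ and comparing with the suspension isomorphism induced by $\phi$ yields the claim. For (2), $\mathbf{BR}^{[0]}(0)$ is a single point with rank-zero tautological bundle, so $Th(\mathbf{BR}^{[0]}(0)) \cong S^0$ and the defining homeomorphism realizes the unit of the ring spectrum $\mathbf{MRP}$ by the very construction of the multiplication. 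For (3), the Grassmannian $\mathbf{BR}^{[n]}(V)$ depends only on the underlying real inner product space of $V$; restriction along $\alpha$ merely changes the group action on this space, so the defining homeomorphism of $\sigma_{\eta,V}$ pulls back directly to that of $\sigma_{\epsilon, \alpha^*V}$.

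For (4), the multiplication on $\mathbf{MRP}$ is defined via the isometry $\kappa_{V,W} : V_\C^2 \oplus W_\C^2 \to (V \oplus W)_\C^2$, which descends to a map of Grassmannians $\mathbf{BR}^{[n]}(V) \times \mathbf{BR}^{[m]}(W) \to \mathbf{BR}^{[n+m]}(V \oplus W)$ sending a pair of subspaces to their direct sum under $\kappa_{V,W}$. This map lifts canonically to a pairing on the tautological bundles, hence to a Thom-space pairing compatible with the identification $S^{V^2} \wedge S^{W^2} \cong S^{(V \oplus W)^2}$ on one-point compactifications. Chasing the two orientation classes through these identifications gives the product formula $\sigma_{\eta,V} \cdot \sigma_{\epsilon,W} = \sigma_{\eta \times_{C_2} \epsilon, V \oplus W}$.

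The main obstacle is (5), the Thom isomorphism. The plan here is to combine (1) and (4) to reduce to the case where $V$ is an irreducible Real $\eta$-representation: multiplicativity of $\sigma$ under direct sum means that if (5) holds for $V$ and for $W$, then it holds for $V \oplus W$, and (1) lets us replace $V$ by any isomorphic Real representation. For quasi-abelian $\eta$, Proposition \ref{prop:quasi-abelian} and Lemma \ref{lem:RR-RU-identification} show that every Real representation is a sum of one-dimensional Real characters, and for a general augmented Lie group we may still decompose into irreducibles by complete reducibility. For a one-dimensional $V = \alpha$, multiplication by $\sigma_{\eta,\alpha}$ on the level of representing maps may be identified with the structure map $S^\alpha \wedge \mathbf{MRP}^{[n]}(W) \to \mathbf{MRP}^{[n]}(W \oplus \alpha)$ of the orthogonal $C_2$-spectrum, up to shuffling of summands. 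That this induces an isomorphism on $G$-homotopy groups in the colimit is then exactly the cofinality statement of Remark \ref{rem:weak-equivalences-are-the-same}, since stabilization by $\alpha$ corresponds to passing further in the defining colimit over a complete Real $\eta$-universe. The technical core of the argument, which I would cite from Schwede's original treatment, is the connectivity estimate on the Grassmannian Thom spaces guaranteeing that this colimit stabilizes, which is the genuine equivariant analogue of the classical connectivity of $MU(n)$.
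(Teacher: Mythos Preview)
Your treatment of (1)--(4) is fine and matches the paper's remark that these are straightforward from the explicit model. The genuine divergence is in (5). The paper does \emph{not} reduce to irreducibles or invoke connectivity estimates; instead it exploits the periodic structure of $\mathbf{MRP}$ directly. The key observation is that if $n = \dim_\C V$ then $(\mathbf{MR}^U)^{[-n]}(V) = S^0$, so there is a canonical inclusion $(\{0\},0): S^0 \to \mathbf{MRP}^U(V)$ landing in the $[-n]$-summand. Smashing a representative $f: S^{k+V+U} \to \mathbf{MRP}^U(U)$ with this map and applying $\mu_{U,V}$ defines a homomorphism $F : \pi^G_{k+V}(\mathbf{MRP}^U) \to \pi^G_k(\mathbf{MRP}^U)$, and an explicit $G$-equivariant straight-line homotopy shows $F$ is a two-sided inverse to $(-)\cdot\sigma_{\eta,V}$. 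This works uniformly for all $V$ and all augmented $\eta$, with no induction on dimension and no appeal to stabilization.

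Your proposed route has two real gaps. First, the reduction to one-dimensional irreducibles is only available for quasi-abelian $\eta$; the proposition is stated for arbitrary augmented compact Lie groups, where irreducible Real representations can have higher dimension, and you give no argument in that case. Second, and more seriously, your identification of multiplication by $\sigma_{\eta,\alpha}$ with a spectrum structure map is not correct: $\sigma_{\eta,\alpha}$ lives in the $[1]$-summand of $\mathbf{MRP}$, so multiplying by it \emph{shifts summands}, whereas the structure maps $S^W \wedge \mathbf{MR}^{[m]}(V) \to \mathbf{MR}^{[m]}(V \oplus W)$ stay within a fixed summand. The cofinality statement of Remark~\ref{rem:weak-equivalences-are-the-same} compares two indexing systems for the \emph{same} colimit; it does not show that any particular multiplication map is an isomorphism. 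The missing idea is precisely the existence of the negative summands $\mathbf{MR}^{[-n]}$ in $\mathbf{MRP}$, which furnish the inverse class directly.
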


\begin{proof}
    We borrow the proof for $\mathbf{MRP}^U$ from \cite{Sch14}. The $C_2$-global version follows because they share the same homotopy groups. 1, 2, 3, and 4 are straightforward using the adjunction between $S^V \wedge -$ and $\Omega^V ( - )$. For 5, given a map $f : S^{k+V+U} \rightarrow \mathbf{MRP}^U(U)$ representing an element of $\pi^G_{k+V}(\mathbf{MRP}^U)$, we first construct a new map \[ S^{(k+U+V)} \xrightarrow{f \wedge (\{0\},0)} \mathbf{MRP}^U(U) \wedge \mathbf{MRP}^U(V) \xrightarrow{\mu_{U,V}} \mathbf{MRP}^U(U\oplus V). \] 
    Here the map $(\{0\},0)$ is defined as follows. Notice if $dim_{\C}V=n$, $({\mathbf{MR}^U})^{[-n]}(V)=S^0$ and we get a map given by inclusion
    \[
    (\{0\},0):S^0\to \mathbf{MRP}^U(V).
    \]
    This defines a map $F : \pi^G_{k+V}(\mathbf{MRP}^U) \rightarrow \pi_k^G (\mathbf{MRP}^U)$ which we will observe is inverse to multiplication by $\sigma_{\eta,V}$. 

    Observe that $F(-) \cdot \sigma_{\eta,V}$ is given by \[ S^{V^2} \rightarrow \mathbf{MR}^U(V^2)\]
    \[ (v,v') \mapsto (V \oplus 0 \oplus V \oplus 0, (v,0,v',0). \] There is a $G$-equivariant homotopy from $(v,v') \mapsto (v,0,v',0)$ to $(v,v',0,0)$, which induces a homotopy from $F(-) \cdot \sigma_{\eta,V}$ to the unit map $S^{V^2} \rightarrow \mathbf{MR}^U(V^2)$. Upon passing to $G$-equivariant homotopy groups by stabilizing, we obtain the identity map. A similar analysis shows that $F(- \cdot \sigma_{ \eta, V})$ is the identity map as well.
\end{proof}

\begin{corollary}
    \label{cor:MR-Real-orid}
    $\mathbf{MR}$ is Real globally oriented, ie there is an $RO(\widehat{\mathbb{T}})$-graded unit in $\pi_{\rho - \hat{\tau}}^{\widehat{\mathbb{T}}}( M \mathbb{R}_{\widehat{\mathbb{T}}} )$ which restricts to the multiplicative identity along the canonical spliting $C_2 \rightarrow \widehat{\mathbb{T}}$.
\end{corollary}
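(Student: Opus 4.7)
The plan is to extract the Real global orientation from Schwede's class $\sigma_{\widehat{\mathbb{T}}, \hat{\tau}} \in \pi^{\widehat{\mathbb{T}}}_{\hat{\tau}}(M\R P_{\widehat{\mathbb{T}}})$ of Proposition \ref{PropertiesOfOrientationClasses}, by projecting to the $M\R$ summand via the splitting of Proposition \ref{prop:MRPeta splits}.

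First I would observe that, by Proposition \ref{PropertiesOfOrientationClasses}(5), $\sigma_{\widehat{\mathbb{T}}, \hat{\tau}}$ is a unit in the $RO(\widehat{\mathbb{T}})$-graded ring $\pi^{\widehat{\mathbb{T}}}_\bigstar(M\R P_{\widehat{\mathbb{T}}})$. The splitting $M\R P_{\widehat{\mathbb{T}}} \simeq \bigvee_n S^{n\rho} \wedge M\R_{\widehat{\mathbb{T}}}$ identifies this ring with $\pi^{\widehat{\mathbb{T}}}_\bigstar(M\R_{\widehat{\mathbb{T}}})[\beta^{\pm 1}]$ for a Bott-type class $\beta$ in degree $\rho$ corresponding to the identity in the $n=1$ summand. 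Since the defining homeomorphism $S^{V^2} \cong Th(\mathbf{BR}^{[\dim_\C V]}(V))$ places $\sigma_{\widehat{\mathbb{T}}, \hat{\tau}}$ in the $n=1$ summand (because $\dim_\C \hat{\tau} = 1$), we may write $\sigma_{\widehat{\mathbb{T}}, \hat{\tau}} = a \cdot \beta$ for a unique class $a \in \pi^{\widehat{\mathbb{T}}}_{\hat{\tau} - \rho}(M\R_{\widehat{\mathbb{T}}})$, and this $a$ is automatically a unit in $\pi^{\widehat{\mathbb{T}}}_\bigstar(M\R_{\widehat{\mathbb{T}}})$.

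The candidate Real global orientation is then $t \coloneqq a^{-1} \in \pi^{\widehat{\mathbb{T}}}_{\rho - \hat{\tau}}(M\R_{\widehat{\mathbb{T}}})$, which is a unit by construction. For the normalization condition, Proposition \ref{PropertiesOfOrientationClasses}(3) says restriction along the canonical inclusion $C_2 \hookrightarrow \widehat{\mathbb{T}}$ sends $\sigma_{\widehat{\mathbb{T}}, \hat{\tau}}$ to $\sigma_{C_2, \rho}$; running the same summand decomposition at the $C_2$-level writes $\sigma_{C_2, \rho} = a|_{C_2} \cdot \beta|_{C_2}$ with $a|_{C_2} \in \pi^{C_2}_0(M\R)$. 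Showing that $a|_{C_2} = 1$ and then inverting gives $t|_{C_2} = 1 \in \pi^{C_2}_0(M\R)$, as required by the definition of Real global orientation.

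The main obstacle is this last identification: one must unwind Schwede's construction in \cite{Sch14} to verify that the homeomorphism $S^{\rho \oplus \rho} \cong Th(\mathbf{BR}^{[1]}(\rho))$ defining $\sigma_{C_2, \rho}$ coincides with the $C_2$-global equivalence $\mathbf{MR}^{[1]} \simeq S^{\rho} \wedge \mathbf{MR}^{[0]}$ that provides $\beta$, so that $\sigma_{C_2, \rho}$ really is identified with the multiplicative identity of $\pi^{C_2}_0(M\R) \cong \Z$ rather than some other unit. Once this bookkeeping is confirmed, the Corollary follows directly from Proposition \ref{PropertiesOfOrientationClasses}.
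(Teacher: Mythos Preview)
Your proposal is correct and follows essentially the same route as the paper: extract the orientation from $\sigma_{\widehat{\mathbb{T}},\hat{\tau}}$, pass through the equivalence $\mathbf{MR}^{[1]} \simeq S^\rho \wedge \mathbf{MR}$ to land in $\pi^{\widehat{\mathbb{T}}}_{\hat{\tau}-\rho}(M\R_{\widehat{\mathbb{T}}})$, and invoke parts (5), (2), (3) of Proposition~\ref{PropertiesOfOrientationClasses}. The obstacle you flag is resolved in the paper implicitly, since the equivalence $\mathbf{MR}^{[1]} \simeq S^\rho \wedge \mathbf{MR}$ (equivalently, your class $\beta$) is itself produced from the $\sigma$ classes via Proposition~\ref{PropertiesOfOrientationClasses}(5), so the identification of $\sigma_{C_2,\rho}$ with $1$ is tautological.
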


\begin{proof}
Apply Proposition \ref{PropertiesOfOrientationClasses} with $\eta :\hat{\mathbb{T}}\to C_2$ the projection and $V = \hat{\tau}$, the canonical representation of $\mathbb{\hat{T}}$. Note that $\sigma_{\hat{\mathbb{T}},\hat{\tau}} \in \mathbf{MR}^{\left[ 1 \right]}(S^{\hat{\tau}})$ and $\mathbf{MR}^{\left[ 1 \right] } \simeq S^{\rho}\wedge \mathbf{MR}$. Consequently we may regard $\sigma_{\hat{\mathbb{T}},\hat{\tau}}$ as an element of $\pi_{\hat{\tau}-\rho}^{\hat{\mathbb{T}}}(\mathbf{MR})$. Then (2) and (3) show that $\sigma_{\hat{\mathbb{T}},\hat{\tau}}$ restricts to the multiplicative identity, and (5) shows that $\sigma_{\hat{\mathbb{T}},\hat{\tau}}$ is an $RO(\hat{\mathbb{T}})$-graded unit.
\end{proof}

Now by Proposition \ref{Real global FGL for Real globally oriented spectrum} we obtain a Real global group law, and thus introduce one of our key players: $\mathbf{R}$.

\begin{corollary}
    The $RO(C_2)$-graded homotopy groups $\pi_{\bigstar}^{\widehat{\mathbb{T}^n}}(M \R_{\widehat{\mathbb{T}^n}})$ assemble to a Real global group law $\underline{\pi}_{\bigstar}(\mathbf{MR})$.
\end{corollary}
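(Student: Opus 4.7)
The plan is straightforward: combine the two preceding results. By Definition \ref{def: MR}, $\mathbf{MR}$ is a commutative ring in orthogonal $C_2$-spectra, so in particular it is a homotopy commutative Real global ring spectrum in $\mathcal{GH}_{C_2}$. By Corollary \ref{cor:MR-Real-orid}, the class $\sigma_{\hat{\mathbb{T}},\hat{\tau}} \in \pi_{\hat{\tau}-\rho}^{\hat{\mathbb{T}}}(\mathbf{MR})$ is a Real global orientation, i.e.\ an $RO(\widehat{\mathbb{T}})$-graded unit restricting to the multiplicative identity along $C_2 \to \widehat{\mathbb{T}}$. Hence the hypotheses of Proposition \ref{Real global FGL for Real globally oriented spectrum} are satisfied, and applying that proposition directly yields the claimed Real global group law structure on $\pi^{\widehat{\mathbb{T}^n}}_{\bigstar}(M\R_{\widehat{\mathbb{T}^n}})$.

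To be fully explicit about the data, the contravariant functoriality of $\underline{\pi}_{\bigstar}(\mathbf{MR})$ in augmented homomorphisms $\widehat{\mathbb{T}^m}\to \widehat{\mathbb{T}^n}$ is the restriction structure on $RO(C_2)$-graded homotopy groups already recorded after Definition \ref{def: MR_eta} (using Proposition \ref{prop:MRP_ring_structure_restricts} to see that restrictions are ring maps, and Proposition \ref{prop:RO(C_2)-grading-is-well-defined} to see that the $RO(C_2)$-grading is preserved). The Euler class $e \in \pi^{\widehat{\mathbb{T}}}_{-\rho}(M\R_{\widehat{\mathbb{T}}})$ is extracted from $\sigma_{\hat{\mathbb{T}},\hat{\tau}}$ via the Thom class normalization of Proposition \ref{PropertiesOfOrientationClasses}, exactly as in the proof of Proposition \ref{Real global FGL for Real globally oriented spectrum} where it arises from identifying $E^\bigstar_{\widehat{A}}(S^{\hat{V}}) \cong E^\bigstar_{\widehat{A}}(S^{\rho})$ via the orientation.

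There is no real obstacle here: all the work has been done already, first in checking that $\mathbf{MR}$ carries the orientation class (Corollary \ref{cor:MR-Real-orid}) and then in the general cofiber-sequence argument establishing that any Real globally oriented ring spectrum produces a Real global group law (Proposition \ref{Real global FGL for Real globally oriented spectrum}). Thus the corollary is simply an instance of that proposition specialized to the spectrum $\mathbf{MR}$, and we name the resulting Real global group law $\mathbf{R} = \underline{\pi}_{\bigstar}(\mathbf{MR})$ for use in the sequel.
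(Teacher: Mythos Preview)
Your proof is correct and follows exactly the paper's approach: the corollary is an immediate combination of Corollary \ref{cor:MR-Real-orid} (establishing the Real global orientation on $\mathbf{MR}$) with Proposition \ref{Real global FGL for Real globally oriented spectrum}. One small terminological point: in the paper's conventions the symbol $\mathbf{R}$ denotes the ordinary global group law obtained by applying the functor of Proposition \ref{prop:overline-functor} to $\underline{\pi}_{\bigstar}(\mathbf{MR})$, not the Real global group law itself, so your final sentence should keep these distinct.
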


\begin{definition}
\label{def:MR-gives-a-GGL}
    Denote by $\mathbf{R}$ the global group law obtained by applying the functor of Proposition \ref{prop:overline-functor} to the Real global group law $\underline{\pi}_{\bigstar}(\mathbf{MR})$.
\end{definition}

On the other hand, using the machinery of our induction isomorphisms we obtain the following.

\begin{corollary}
    Let $\eta : G \rightarrow C_2$ be a quasi-abelian augmented group. Then the $G$-spectrum $M \R_\eta$ is Real $\eta$-oriented.
\end{corollary}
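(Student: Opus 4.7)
The plan is to observe that this corollary is essentially an immediate consequence of two results already established earlier in the excerpt, so there is no real obstacle to overcome, only a matter of combining them correctly.

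First I would invoke Corollary \ref{cor:MR-Real-orid}, which furnishes the Real global orientation on $\mathbf{MR}$: the element $\sigma_{\widehat{\mathbb{T}},\hat{\tau}}\in \pi^{\widehat{\mathbb{T}}}_{\hat\tau-\rho}(\mathbf{MR})$ is an $RO(\widehat{\mathbb{T}})$-graded unit restricting to the multiplicative identity along $C_2\hookrightarrow \widehat{\mathbb{T}}$. Because $\mathbf{MR}$ is a homotopy commutative Real global ring spectrum (the ring structure having been recorded in Definition \ref{def: MR} and preserved under the pullback $\eta^{*}$ by Proposition \ref{prop:MRP_ring_structure_restricts}), we are in the hypothesis of Proposition \ref{prop: Real global orientation induces Real hat{G} orientation}.

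Next, I would apply Proposition \ref{prop: Real global orientation induces Real hat{G} orientation} directly to $E=\mathbf{MR}$ with the Real global orientation $\sigma_{\widehat{\mathbb{T}},\hat\tau}$. The conclusion of that proposition is that, for every quasi-abelian augmented compact Lie group $\eta:G\to C_2$, the $G$-spectrum $E_{\eta}=M\mathbb{R}_{\eta}$ carries a Real $\eta$-orientation; concretely, one obtains a class $y(\rho)\in M\mathbb{R}_{\eta}^{\rho}(\mathbb{C}P(\mathcal{U}_{\eta}))$ by pulling back along the projection $G\times_{C_2}\widehat{\mathbb{T}}\to \widehat{\mathbb{T}}$ and then applying the $(\widehat{\mathbb{T}},G)$-homotopy orbit construction using $S(\mathcal{U}_{\eta})$ as the model for $E_G\widehat{\mathbb{T}}$. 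This class satisfies the conditions of Definition \ref{Definition Real orientation}: the restriction to $\mathbb{C}P(\rho\oplus\alpha)\cong S^{\alpha}$ for each irreducible Real $\eta$-representation $\alpha$ is identified, via Lemma \ref{HauLemma3.4}, with the restriction of $\sigma_{\widehat{\mathbb{T}},\hat\tau}$ along the classifying map $\alpha:G\to \widehat{\mathbb{T}}$, hence is a unit, and restricts to $1$ when $\alpha=\rho$.

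Since both inputs are already available in the paper, the only thing to verify is that the quasi-abelian hypothesis on $\eta$ is what is needed to guarantee the existence of the complete Real $\eta$-universe $\mathcal{U}_{\eta}$ used in the construction, which is Lemma \ref{lem:existence-of-complete-Real-universes}. No further work is required.
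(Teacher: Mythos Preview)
Your proposal is correct and follows exactly the same approach as the paper's proof, which simply reads ``Combine Corollary \ref{cor:MR-Real-orid} with Proposition \ref{prop: Real global orientation induces Real hat{G} orientation}.'' Your added remarks unpacking the mechanism of Proposition \ref{prop: Real global orientation induces Real hat{G} orientation} and noting that the quasi-abelian hypothesis is what guarantees the existence of $\mathcal{U}_\eta$ are accurate elaborations, but the logical skeleton is identical.
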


\begin{proof}
    Combine Corollary \ref{cor:MR-Real-orid} with Proposition \ref{prop: Real global orientation induces Real hat{G} orientation}.
\end{proof}

\section{Hu-Kriz Maps}

Now that we have constructed $\mathbf{MR}$ and established its basic properties, we may begin to work towards our main theorem. 

\begin{proposition}
\label{prop:res-is-a-GGL-map}
    The ring maps \[ Res^{\widehat{\mathbb{T}}^n}_{\mathbb{T}^n} : \pi^{\widehat{\mathbb{T}}^n}_{*\rho}(M \mathbb{R}_{\widehat{\mathbb{T}}^n}) \rightarrow \pi^{\mathbb{T}^n}_*(MU_{\mathbb{T}^n}) \] assemble to a map of global group laws $\mathbf{R} \rightarrow \mathbf{L}$.
\end{proposition}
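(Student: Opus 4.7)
The plan is to verify the three defining properties of a morphism of global group laws: each level $\mathbf{R}(\mathbb{T}^n) \to \mathbf{L}(\mathbb{T}^n)$ is a graded ring homomorphism; these levels assemble into a natural transformation; and the distinguished class $\bar{e} \in \mathbf{R}(\mathbb{T})$ maps to $e \in \mathbf{L}(\mathbb{T})$.

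For the ring map itself, recall from the construction of $\overline{(-)}$ that $\mathbf{R}(\mathbb{T}^n)$ is concentrated in even $\mathbb{Z}$-degrees with $\mathbf{R}(\mathbb{T}^n)_{2k} = \pi^{\hat{\mathbb{T}}^n}_{k\rho}(M\R_{\hat{\mathbb{T}}^n})$. The augmentation carried by $\mathbb{T}^n \subset \hat{\mathbb{T}}^n$ is trivial, so $\eta^*\rho$ is the $2$-dimensional trivial real representation of $\mathbb{T}^n$; hence restriction lands in $\pi^{\mathbb{T}^n}_{2k}(MU_{\mathbb{T}^n}) = \mathbf{L}(\mathbb{T}^n)_{2k}$. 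Multiplicativity is the content of Proposition~\ref{prop:MRP_ring_structure_restricts}, and on odd degrees the source vanishes so there is nothing further to verify.

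For naturality, given any homomorphism $f : \mathbb{T}^n \to \mathbb{T}^m$, applying $(-) \rtimes C_2$ produces an augmented map $\hat{f} : \hat{\mathbb{T}}^n \to \hat{\mathbb{T}}^m$ fitting into the commutative square
\[ \begin{tikzcd}
\mathbb{T}^n \arrow[r,hook] \arrow[d,"f"'] & \hat{\mathbb{T}}^n \arrow[d,"\hat{f}"] \\
\mathbb{T}^m \arrow[r,hook] & \hat{\mathbb{T}}^m
\end{tikzcd} \]
of augmented groups. The compatibility of the ring structure with pullbacks (again Proposition~\ref{prop:MRP_ring_structure_restricts}) then yields the required commuting square of graded ring maps.

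The heart of the argument is showing $\bar{e} \mapsto e$. Unpacking Proposition~\ref{Real global FGL for Real globally oriented spectrum} applied to $\mathbf{MR}$, the class $\bar{e}$ arises from the cofiber sequence $\hat{\mathbb{T}}/C_2 \to S^0 \to S^{\hat{\tau}}$ combined with the identification $\pi^{\hat{\mathbb{T}}}_{\bigstar}(S^{\hat{\tau}}) \cong \pi^{\hat{\mathbb{T}}}_{\bigstar - \hat{\tau} + \rho}$ supplied by the Real global orientation class $\sigma_{\hat{\mathbb{T}},\hat{\tau}}$ from Corollary~\ref{cor:MR-Real-orid}. Restricting the cofiber sequence along the inclusion $\mathbb{T} \hookrightarrow \hat{\mathbb{T}}$ gives $\mathbb{T}_+ \to S^0 \to S^\tau$, precisely the sequence Hausmann uses to extract $e \in \mathbf{L}(\mathbb{T})$ from the complex orientation of $MU_{\mathbb{T}}$. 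By Proposition~\ref{PropertiesOfOrientationClasses}(3) applied to the augmented inclusion $\mathbb{T} \hookrightarrow \hat{\mathbb{T}}$, the orientation class $\sigma_{\hat{\mathbb{T}},\hat{\tau}}$ restricts to $\sigma_{\mathbb{T},\tau}$, which (via the identification of the underlying spectrum of $M\R_{\mathbb{T}}$ with $MU_{\mathbb{T}}$) is exactly the class giving the complex orientation. Thus the identifications used to define $\bar{e}$ and $e$ are intertwined by restriction, and $\bar{e} \mapsto e$ as desired. The main obstacle is just careful bookkeeping of grading conventions and the two Euler-class constructions; no deeper issue arises since Proposition~\ref{PropertiesOfOrientationClasses}(3) delivers the crucial compatibility of orientation classes.
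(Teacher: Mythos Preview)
Your proof is correct and follows the same underlying idea as the paper's---namely, that restriction along $\mathbb{T}^n \hookrightarrow \widehat{\mathbb{T}}^n$ is natural in maps of tori because the functoriality of $\mathbf{R}$ is itself built from restriction maps. The paper's own proof is a single sentence invoking ``functoriality of restrictions for genuine $G$-spectra'' and leaves it at that. You give strictly more: you spell out why the target grading is correct, you exhibit the naturality square explicitly, and---most notably---you verify that the distinguished class $\bar{e}$ is sent to $e$ by tracing through the restricted cofiber sequence and Proposition~\ref{PropertiesOfOrientationClasses}(3). The paper's proof silently omits this last point; your argument for it is sound and is arguably a necessary part of checking that one has a morphism of global group laws rather than merely a natural transformation of ring-valued functors.
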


\begin{proof}
    By functoriality of restrictions for genuine $G$-spectra, we see that $Res^{\widehat{\mathbb{T}}^n}_{\mathbb{T}^n}$ is a natural transformation of contravariant functors from $tori$ to rings. Note that we are using the fact that the global group law structure on $\mathbf{R}$ arises from the restriction maps between $\pi^{\widehat{\mathbb{T}}^n}_*(M \mathbb{R}_{\widehat{\mathbb{T}}^n})$ as in Definition \ref{def:MR-gives-a-GGL}. 
\end{proof}

The following theorem does not depend on any conjectural assumptions. Recall that since $\mathbf{L}$ is the initial global group law, there is a unique global group law map $\mathbf{L} \rightarrow \mathbf{R}$.

\begin{theorem}
\label{thm:split-surj}
    Restriction to $\mathbb{T}^n \subset \widehat{\mathbb{T}}^n$ defines a split surjection \[ \pi^{\widehat{\mathbb{T}}^n}_{\rho *} ( M \mathbb{R}_{\widehat{\mathbb{T}}^n}) \twoheadrightarrow \pi^{\mathbb{T}^n}_{2*}(MU_{\mathbb{T}^n}) \textrm{.} \]
\end{theorem}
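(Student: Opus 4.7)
The plan is to leverage the initiality of $\mathbf{L}$ among global group laws, so that the theorem becomes a formal consequence of the structure built up in previous sections. First I would invoke Proposition \ref{prop:res-is-a-GGL-map}, which assembles the levelwise restriction maps into a global group law morphism $r:\mathbf{R}\to\mathbf{L}$. Since $\mathbf{L}$ is the initial global group law, there exists a unique morphism $s:\mathbf{L}\to\mathbf{R}$, and the composite $r\circ s:\mathbf{L}\to\mathbf{L}$ must equal $\mathrm{id}_{\mathbf{L}}$ by a second application of initiality.

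Evaluating this identity at the object $\mathbb{T}^n$ exhibits the component $r(\mathbb{T}^n):\mathbf{R}(\mathbb{T}^n)\to\mathbf{L}(\mathbb{T}^n)$ as a split surjection with splitting $s(\mathbb{T}^n)$. Unwinding Definition \ref{def:MR-gives-a-GGL} together with the construction preceding Proposition \ref{prop:overline-functor}, one has $\mathbf{R}(\mathbb{T}^n)=\pi^{\widehat{\mathbb{T}}^n}_{\rho *}(M\mathbb{R}_{\widehat{\mathbb{T}}^n})$, while Hausmann's theorem gives $\mathbf{L}(\mathbb{T}^n)=\pi^{\mathbb{T}^n}_{2*}(MU_{\mathbb{T}^n})$. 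Moreover $r(\mathbb{T}^n)$ is by construction the restriction map $\mathrm{Res}^{\widehat{\mathbb{T}}^n}_{\mathbb{T}^n}$ appearing in the statement, yielding precisely the asserted split surjection.

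The substance of the argument lies not in this final diagram-chase but in the apparatus already assembled: that $\underline{\pi}_{\bigstar}(\mathbf{MR})$ actually forms a Real global group law (Proposition \ref{Real global FGL for Real globally oriented spectrum}, leaning on the Real global orientability of $\mathbf{MR}$ from Corollary \ref{cor:MR-Real-orid}), and that passage to the $\rho *$-graded part converts this into an honest global group law $\mathbf{R}$ (Proposition \ref{prop:overline-functor}). Given these, the only residual bookkeeping is to check that the global-group-law morphism $r$ of Proposition \ref{prop:res-is-a-GGL-map} agrees levelwise with the genuine equivariant restriction map, which is immediate from the definitions. I expect the main obstacle to be entirely conceptual rather than calculational: one must trust that the $\overline{(-)}$ construction is well-defined and that restriction preserves the global-group-law axioms, both of which are already recorded above. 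Note also that injectivity is not claimed here, which is exactly why no evenness or regularity hypothesis enters.
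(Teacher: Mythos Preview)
Your proposal is correct and matches the paper's proof essentially verbatim: both compose the unique map $\mathbf{L}\to\mathbf{R}$ from initiality with the restriction morphism $\mathbf{R}\to\mathbf{L}$ of Proposition~\ref{prop:res-is-a-GGL-map}, observe that the composite is the identity by initiality of $\mathbf{L}$, and then read off the split surjection at level $\mathbb{T}^n$. Your additional remarks identifying $\mathbf{R}(\mathbb{T}^n)$ and $\mathbf{L}(\mathbb{T}^n)$ explicitly and tracing the dependencies back through Propositions~\ref{Real global FGL for Real globally oriented spectrum} and~\ref{prop:overline-functor} are accurate and simply make explicit what the paper leaves implicit.
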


\begin{proof}
    Observe that we have the composition \[ \mathbf{L} \rightarrow \mathbf{R} \rightarrow \mathbf{L} \] of morphisms of global group laws. Since $\mathbf{L}$ is initial, this composition is the identity, hence $\mathbf{R}(\mathbb{T}^n) \rightarrow \mathbf{L}(\mathbb{T}^n)$ is surjective. It is split by the unique map of global group laws $\mathbf{L} \rightarrow \mathbf{R}$.
\end{proof}

Using Real equivariant formal group laws, we can actually prove a stronger version of the above theorem.

\begin{theorem}
    Let $G$ be an abelian compact Lie group and let $\hat{G} = G \rtimes C_2$. Then the restriction map along $G \subset \hat{G}$ defines a split surjection \[ \pi^{\hat{G}}_{\rho *} ( M \mathbb{R}_{\hat{G}}) \twoheadrightarrow \pi^{G}_{2*}(MU_G) \textrm{.} \]
\end{theorem}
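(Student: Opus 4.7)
The plan is to mimic the proof of Theorem \ref{thm:split-surj}, but replace the universal property of the global group law $\mathbf{L}$ with Hausmann's universal property for $\pi^G_{2*}(MU_G)$ as the $G$-equivariant Lazard ring (for $G$ compact abelian Lie). The Real-oriented machinery developed in Section \ref{Section 4 Real orientations of global spectra and equivariant spectra} converts the Real global orientation on $\mathbf{MR}$ into enough algebraic structure on $\pi^{\hat{G}}_{\rho*}(M\R_{\hat{G}})$ to invoke this universality.

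First I would apply Proposition \ref{prop: Real global orientation induces Real hat{G} orientation} together with Corollary \ref{cor:MR-Real-orid} to conclude that $M\R_{\hat{G}}$ carries a canonical Real $\hat{G}$-orientation. By Proposition \ref{Real FGL for Real-oriented spectrum}, this orientation equips $\pi^{\hat{G}}_{\rho*}(M\R_{\hat{G}})$ with the structure of a $G$-equivariant formal group law (in the sense of \cite{CGK00}) whose restriction along $G\subset \hat{G}$ is precisely the $G$-equivariant formal group law on $\pi^G_{2*}(MU_G)$ coming from the complex orientation on $MU_G$. The latter is, by Hausmann's theorem \cite{Hau22}, the universal $G$-equivariant formal group law.

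Next, universality produces a unique ring map
\[
s \colon \pi^G_{2*}(MU_G) \longrightarrow \pi^{\hat{G}}_{\rho*}(M\R_{\hat{G}})
\]
classifying the $G$-equivariant formal group law on $\pi^{\hat{G}}_{\rho*}(M\R_{\hat{G}})$. Composing $s$ with the restriction $\mathrm{Res}^{\hat{G}}_{G}$ yields a map of $G$-equivariant formal group laws from the universal one to itself, which by initiality must be the identity. Hence $\mathrm{Res}^{\hat{G}}_{G}$ admits $s$ as a section, proving split surjectivity.

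The main technical point to verify is that the restriction of the Real $\hat{G}$-equivariant formal group law on $\pi^{\hat{G}}_{\rho*}(M\R_{\hat{G}})$ to $G$ genuinely recovers the \emph{universal} $G$-equivariant formal group law on $\pi^G_{2*}(MU_G)$, not merely some $G$-equivariant formal group law. This is essentially contained in Proposition \ref{Real FGL for Real-oriented spectrum}, together with the fact (established in Section 5) that $\mathrm{Res}^{\hat{G}}_{G}(M\R_{\hat{G}}) = MU_G$ as commutative ring $G$-spectra, and that the restriction of the Real orientation is the standard complex orientation on $MU_G$ (cf.\ Proposition \ref{prop: Real global orientation restricts to complex orientation}, adapted from $e\subset C_2$ to $G\subset \hat{G}$). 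Once this compatibility is confirmed, no further conjectural input is needed, and the argument parallels the proof of Theorem \ref{thm:split-surj} verbatim with $\mathbf{L}(\mathbb{T}^n)$ replaced by $\pi^G_{2*}(MU_G)$ and initiality replaced by the universal property of the equivariant Lazard ring.
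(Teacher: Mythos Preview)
Your proposal is correct and follows essentially the same approach as the paper's own proof: obtain a Real $\hat{G}$-orientation on $M\R_{\hat{G}}$ from the Real global orientation on $\mathbf{MR}$, use Proposition \ref{Real FGL for Real-oriented spectrum} to produce a $G$-equivariant formal group law on $\pi^{\hat{G}}_{\rho*}(M\R_{\hat{G}})$ restricting to the universal one on $\pi^G_{2*}(MU_G)$, and then invoke Hausmann's universality to produce a section of the restriction map. The paper's proof is nearly verbatim what you have written, only more terse.
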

\begin{proof}
    $M\R_{\hat{G}}$ admit a Real $\eta$-orientation from the Real global orientation of $\mathbf{MR}$ by Proposition \ref{prop: Real global orientation induces Real hat{G} orientation}. It restricts to the canonical equivariant orientation of $MU_G$ by Proposition \ref{prop: Real global orientation restricts to complex orientation}. Thus it carries a $G$-equivariant formal group law which restricts to the universal one on $\pi_*^G MU_G$ by Proposition \ref{Real FGL for Real-oriented spectrum}. Let $\varphi: \pi^G_* MU_G\to \pi^{\hat{G}}_{*\rho}M\R_{\hat{G}}$ be the classifying map, then it is a right inverse of the restriction map.
\end{proof}

We will give two proofs that the restriction maps $\mathbf{R}(\mathbb{T}^n) \rightarrow \mathbf{L}(\mathbb{T}^n)$ are injective. Both will require a conjectural assumption, which we plan to prove in future work. That the Evenness Conjecture implies the Regularity Conjecture will be proved in Section \ref{sec: Evenness Conj implies Regularity Conj}.

\subsection{Proof using an evenness conjecture}

\begin{conjecture}[Evenness Conjecture]
\label{conj:MR-eta-is-even}
    For any compact augmented quasi-abelian Lie group $\eta : G \rightarrow C_2$, we have \[ \pi^G_{* \rho - 1}( M \mathbb{R}_\eta ) = 0 \textrm{.} \]
\end{conjecture}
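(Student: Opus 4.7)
The Evenness Conjecture simultaneously generalizes Hausmann's evenness $\pi^G_{2*-1}(MU_G) = 0$ for compact abelian Lie $G$ (the trivially augmented case, where $M\R_\eta = MU_G$) and the classical Hu–Kriz evenness for $M\R$ (the case $G = C_2$). My plan is to adapt Hausmann's cellular approach to the Real setting by constructing a cellular presentation of $\mathbf{MR}_\eta$ whose cells are indexed by Real $\eta$-representations, so that all contributions to $\pi^G_\bigstar(\mathbf{MR}_\eta)$ land in $*\rho$-degrees.

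First I would treat the case of augmented tori $\widehat{\mathbb{T}}^n$. Recall from Definition \ref{def: MR} that $\mathbf{MR}(V) = \Omega^{iV} Th(\mathbf{BR}(V))$ for the Real Grassmannian $\mathbf{BR}(V) = Gr_{\dim_\C V_\C}(V_\C^2)$. Choosing a $\widehat{\mathbb{T}}^n$-invariant complete flag in $V_\C^2$, which exists because every Real $\widehat{\mathbb{T}}^n$-representation splits as a sum of Real characters by Lemma \ref{lem:RR-RU-identification}, the associated Schubert decomposition produces a $\widehat{\mathbb{T}}^n$-equivariant cell structure on $\mathbf{BR}(V)$ whose open cells are disks in Real $\widehat{\mathbb{T}}^n$-representations. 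The tautological hermitian bundle is equivariantly Real, so the Thom space $Th(\mathbf{BR}(V))$ inherits a cell structure with cells of Real representation dimensions $k\rho$. Passing through the colimit defining $\mathbf{MR}_{\widehat{\mathbb{T}}^n}$ yields a presentation built entirely from Real representation spheres, and the resulting cellular spectral sequence computing $\pi^{\widehat{\mathbb{T}}^n}_\bigstar(\mathbf{MR}_{\widehat{\mathbb{T}}^n})$ has $E_1$-page concentrated in $*\rho$-degrees.

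To extend this to an arbitrary quasi-abelian $\eta: G \to C_2$, I would use the embedding $G \hookrightarrow \widehat{\mathbb{T}}^n$ furnished by Proposition \ref{prop:quasi-abelian}. Since the restriction of a Real $\widehat{\mathbb{T}}^n$-representation along $G \hookrightarrow \widehat{\mathbb{T}}^n$ is again a Real $\eta$-representation, the cell structure on $\mathbf{BR}(V)$ restricts to a $G$-equivariant cell structure whose cells remain Real-representation disks. Combined with the compatibility of $\mathbf{MR}$ with restriction recorded in Proposition \ref{prop:MRP_ring_structure_restricts}, the same spectral sequence argument then produces the desired vanishing $\pi^G_{*\rho - 1}(\mathbf{MR}_\eta) = 0$.

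The principal obstacle is controlling attaching maps and differentials in the cellular filtration: a priori, differentials could carry classes from $*\rho$ into $*\rho - 1$ degrees via multiplication by the Euler class $a_\sigma$ of the sign representation, and in the Real $RO(C_2)$-graded setting such mixed-degree behavior is genuinely possible. Ruling it out likely requires additional input beyond a naive Schubert cell decomposition — for instance, a geometric identification of attaching maps via Real-analytic tubular neighborhoods of Schubert subvarieties, or a Real analogue of the even-filtration machinery recently developed in chromatic homotopy theory, or an inductive leverage of the ring structure on $\mathbf{MR}_\eta$ to bootstrap vanishing in small cases (such as $G = C_2$ and trivially augmented tori, where it is known) to the general quasi-abelian case. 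A secondary technical issue is verifying that the Schubert decomposition is functorial in the Real $\eta$-representation $V$, ensuring that the structure maps of $\mathbf{MR}$ respect the cell filtrations compatibly enough for the colimit argument to go through.
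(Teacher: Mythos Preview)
The statement you are attempting to prove is a \emph{conjecture} in the paper, not a theorem: the authors do not give a proof, and explicitly say that ``the second and third authors plan to prove [it] in future work.'' So there is no proof in the paper to compare against. The paper does, however, sketch two possible strategies at the end of Section~\ref{sec: Evenness Conj implies Regularity Conj}: an $RO(C_2)$-graded Atiyah--Segal completion theorem for $M\R_\eta$, or an argument (due to Hausmann, unpublished) reducing to the nonexistence of infinitely Euler-divisible elements in $M\R_\eta^{\rho *}$. Your Schubert-cell proposal is a third, genuinely different, line of attack.

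That said, your argument has a real gap beyond the ones you flag. You assert that the cellular spectral sequence for $\pi^G_\bigstar(M\R_\eta)$ has $E_1$-page ``concentrated in $*\rho$-degrees,'' but this is not what a Real-representation cell structure buys you. If the associated graded is a wedge of spheres $S^{V_i}$ with each $V_i$ a Real $\eta$-representation, then the $E_1$-page in degree $\bigstar$ is $\bigoplus_i \pi^G_{\bigstar - V_i}(S^0)$, i.e.\ shifted copies of the equivariant stable stems. These are certainly not concentrated in $*\rho$-degrees; for instance already $\pi^{C_2}_{*\rho-1}(S^0)$ is highly nontrivial. The analogous nonequivariant statement---that a CW spectrum with only even cells has vanishing odd homotopy---is false for the same reason (it holds for homology, not homotopy). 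So the vanishing you want cannot be read off from the cell dimensions alone; one needs genuine control over how the ring structure or Thom classes force cancellation, which is essentially what the paper's two suggested approaches supply. Your identification of the $a_\sigma$-differential issue is a symptom of this, but the problem is already present at $E_1$, not just in the differentials.
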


Note that if $\eta$ is trivial, then $G$ is abelian, and this is precisely the statement that $MU_G^*$ is concentrated in even degrees (which is known to be true). Additionally, this kind of evenness has been observed to be a useful notion of evenness for $C_2$-equivariant homotopy theory, particularly in relation to $M \mathbb{R}$, cf \cite{HM15}.

\begin{theorem}\label{thm: Hu-Kriz iso}
    Assuming Conjecture \ref{conj:MR-eta-is-even}, the canonical map $\mathbf{L} \rightarrow \mathbf{R}$ is an isomorphism with inverse the restriction map of Proposition \ref{prop:res-is-a-GGL-map}. More generally, the restriction \[ \pi^{\hat{G}}_{\rho *} ( M \mathbb{R}_{\hat{G}}) \twoheadrightarrow \pi^{G}_{2*}(MU_G)\]
    is an isomorphism for all compact abelian Lie groups $G$.
\end{theorem}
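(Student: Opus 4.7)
The plan is to establish injectivity of the restriction map via an Euler-class cofiber sequence, using the Evenness Conjecture precisely to kill the obstruction term, and then combine with the split surjection stated immediately above (which is proved via the universal Real $\hat{G}$-equivariant formal group law on $M\mathbb{R}_{\hat{G}}$ together with Hausmann's identification of $\pi^G_{2*}(MU_G)$ with the $G$-equivariant Lazard ring, via Proposition \ref{Real FGL for Real-oriented spectrum} and Proposition \ref{prop: Real global orientation induces Real hat{G} orientation}). Nothing further is needed on the surjectivity side.

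For injectivity, I would pull back the Euler cofiber sequence $S(\sigma)_+ \to S^0 \xrightarrow{a_\sigma} S^\sigma$ of $C_2$-spectra along the augmentation $\hat{G} \to C_2$. Since the augmentation has kernel $G$, there is a $\hat{G}$-homeomorphism $S(\sigma) \cong \hat{G}/G$. Applying $F(-, M\mathbb{R}_{\hat{G}})^{\hat{G}}$ and using the Wirthmüller identification
\[ [(\hat{G}/G)_+, M\mathbb{R}_{\hat{G}}]^{\hat{G}}_{\bigstar} \cong \pi^G_{\bigstar}(M\mathbb{R}_{\hat{G}}\vert_G) = \pi^G_{\bigstar}(MU_G), \]
under which the map $(\hat{G}/G)_+ \to S^0$ corresponds to restriction along $G \subset \hat{G}$, I extract a long exact sequence
\[ \pi^{\hat{G}}_{\bigstar + \sigma}(M\mathbb{R}_{\hat{G}}) \xrightarrow{\cdot\, a_\sigma} \pi^{\hat{G}}_{\bigstar}(M\mathbb{R}_{\hat{G}}) \xrightarrow{\mathrm{res}} \pi^G_{\bigstar}(MU_G) \to \pi^{\hat{G}}_{\bigstar + \sigma - 1}(M\mathbb{R}_{\hat{G}}). \]
Setting $\bigstar = *\rho$ and noting that $\rho\vert_G$ is the trivial two-dimensional representation (so $\pi^G_{*\rho}(MU_G) = \pi^G_{2*}(MU_G)$), the middle map becomes exactly the restriction map of the theorem.

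The final step is degree bookkeeping. Using $\sigma = \rho - 1$ in $RO(C_2)$, the left-hand term above sits in degree $*\rho + \sigma = (*+1)\rho - 1$. Since $\hat{G} = G \rtimes C_2$ is quasi-abelian by Proposition \ref{prop:quasi-abelian}, the Evenness Conjecture \ref{conj:MR-eta-is-even} applied to the augmentation $\hat{G} \to C_2$ gives
\[ \pi^{\hat{G}}_{*\rho + \sigma}(M\mathbb{R}_{\hat{G}}) = \pi^{\hat{G}}_{(*+1)\rho - 1}(M\mathbb{R}_{\hat{G}}) = 0, \]
so multiplication by $a_\sigma$ is zero and restriction is injective. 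Combined with the split surjection, this yields the claimed isomorphism, and specializing $G = \mathbb{T}^n$ recovers the statement that $\mathbf{L} \to \mathbf{R}$ is an isomorphism whose inverse is the map of Proposition \ref{prop:res-is-a-GGL-map}. The only genuine subtlety is the identification of the connecting map with honest restriction (which is a standard Wirthmüller computation); granted that and the Evenness Conjecture, the argument is essentially formal.
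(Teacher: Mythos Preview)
Your argument is correct and follows the same strategy as the paper: both use the cofiber sequence $(\hat{G}/G)_+ \to S^0 \to S^\sigma$ to identify the kernel of restriction with the image of $a_\sigma$, then invoke the Evenness Conjecture to kill the relevant term. Your direct use of the identity $\sigma = \rho - 1$ in $RO(C_2)$ to see $\pi^{\hat{G}}_{*\rho+\sigma} = \pi^{\hat{G}}_{(*+1)\rho-1} = 0$ is slightly more streamlined than the paper's detour through the $\rho$-periodicity of $M\mathbb{R}P_{\hat{G}}$, but the content is identical.
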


\begin{proof}
    It suffices by Theorem \ref{thm:split-surj} show each map $\mathbf{R}(\mathbb{T}^n) \rightarrow \mathbf{L}(\mathbb{T}^n)$ is injective. Let $G=\widehat{\mathbb{T}}^n$ with the canonical augmentation. From Proposition \ref{prop:MRPeta splits}, we have $\pi_{-1}^G M\R P_G\cong \pi_{*\rho-1}^G M\R_{G}$. Then Conjecture \ref{conj:MR-eta-is-even} implies that $\pi_{-1}^G M\R P_G$=0. Since $\pi_{\bigstar}^G M\R P_G$ is $\rho$-periodic, we have $\pi_{\sigma}^G M\R P_G=0$, which implies that $\pi_{*\rho+\sigma}^G M\R_G=0$. We have the following cofiber sequence
    \[
    {G/{\mathbb{T}}^n}_+\to S^0\to S^{\sigma},
    \]
    which leads to a long exact sequence
    \begin{equation*}
    \cdots\to \pi^G_{*\rho+\sigma}M\R_{G}\xrightarrow{a_{\sigma}} \pi^G_{*\rho}M\R_{G} \xrightarrow{res^G_{\mathbb{T}^n}}\pi^{\mathbb{T}^n}_{2*}MU_{\mathbb{T}^n}\to\cdots   
    \end{equation*}
    where $a_\sigma$ is the Euler class associated to the $C_2$-representation $\sigma$. Since the leftmost group is zero, we deduce that the map $\mathbf{R}(\mathbb{T}^n)\to \mathbf{L}(\mathbb{T}^n)$ is injective. The general case is proved in the same way, as the above cofiber sequence holds for general compact abelian Lie groups, not just for tori.
\end{proof}

\subsection{Proof in the case of augmented-tori using a regularity conjecture}

Alternatively, one might hope to continue to mimic the arguments of \cite{Hau22}. In particular, the main result of \cite{Hau22} is that a certain map of global group laws is an isomorphism. That this map is injective is deduced with the help of regularity of the global group laws in question.

\begin{conjecture}[Regularity Conjecture]
\label{conj:R-is-regular}
    The global group law $\mathbf{R}$ is regular in the sense of \cite[Lemma 5.8]{Hau22}.
\end{conjecture}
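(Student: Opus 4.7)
The plan is to reduce the Regularity Conjecture to the Evenness Conjecture restricted to augmented tori, then to establish the latter by induction on $n$ using a cofiber sequence argument powered by the Real global orientation on $\mathbf{MR}$. As the paper notes, the Evenness Conjecture restricted to augmented tori is equivalent to the Regularity Conjecture, and this restricted form directly implies injectivity of $\mathbf{R}(\mathbb{T}^n) \to \Phi^{\mathbb{T}^n}\mathbf{R}$ via the same isotropy-separation manipulation that appears in the proof of Theorem \ref{thm: Hu-Kriz iso}.

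For the base case $\widehat{\mathbb{T}}^0 = C_2$, the vanishing $\pi^{C_2}_{*\rho - 1}(M\mathbb{R}) = 0$ is the classical Hu-Kriz result \cite{HK01}. For the inductive step, I would fix a split surjective Real character $\hat{V}: \widehat{\mathbb{T}}^n \to \widehat{\mathbb{T}}$ with kernel $\widehat{\mathbb{T}}^{n-1}$, obtained by applying $- \rtimes C_2$ to a split surjection $V: \mathbb{T}^n \to \mathbb{T}$, and consider the cofiber sequence $(\widehat{\mathbb{T}}^n / \widehat{\mathbb{T}}^{n-1})_+ \to S^0 \to S^{\hat{V}}$ of pointed $\widehat{\mathbb{T}}^n$-spaces. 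Smashing with $M\mathbb{R}_{\widehat{\mathbb{T}}^n}$ and using the Real orientation identification $\pi^{\widehat{\mathbb{T}}^n}_\alpha(S^{\hat{V}} \wedge M\mathbb{R}) \cong \pi^{\widehat{\mathbb{T}}^n}_{\alpha - \rho}(M\mathbb{R})$ together with the induction isomorphism of Lemma \ref{lem induction iso} would produce a long exact sequence relating the desired vanishing for $\widehat{\mathbb{T}}^n$ to the inductive hypothesis for $\widehat{\mathbb{T}}^{n-1}$ and to a vanishing statement one $\rho$-degree lower.

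The hard part will be that the naive induction sketched above appears to require both the inductive statement for $\widehat{\mathbb{T}}^{n-1}$ \emph{and} injectivity of multiplication by Euler classes of Real characters on $\pi^{\widehat{\mathbb{T}}^n}_{*\rho - 1}(M\mathbb{R})$, which threatens circularity with the statement one is trying to prove. My expectation is that this can be resolved by constructing, via the Real equivariant formal group law framework of Proposition \ref{Real FGL for Real-oriented spectrum}, an explicit polynomial-ring-like presentation of $\pi^{\widehat{\mathbb{T}}^n}_{*\rho}(M\mathbb{R}_{\widehat{\mathbb{T}}^n})$ over $MU_*$ with generators coming from Euler classes of Real characters of $\widehat{\mathbb{T}}^n$, mirroring Hausmann's description of $\pi^{\mathbb{T}^n}_* MU_{\mathbb{T}^n}$. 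The short exact sequence of Lemma \ref{lemma:short-exact-sequence} relating Real and complex characters, together with the unconditional geometric-fixed-points isomorphism of Theorem \ref{thm:res-induces-iso-on-geom-fixed-points} to control the Euler-class localization, should supply enough structural input. Carrying out this computation is the main technical challenge, and is the direction we expect future work to take.
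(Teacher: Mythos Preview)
The statement you are attempting to prove is a \emph{conjecture} in the paper, not a theorem: the paper does not supply a proof of Conjecture~\ref{conj:R-is-regular}. It is explicitly left open, with the authors writing that ``the second and third authors plan to prove in future work'' the stronger Evenness Conjecture, and later listing two possible routes (an $RO(C_2)$-graded Atiyah--Segal completion theorem, or an argument requiring only the nonexistence of infinitely Euler-divisible elements in $M\mathbb{R}_\eta^{\rho *}$). So there is no paper proof to compare your proposal against.

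Your proposal is honest about its incompleteness, but the gap you yourself flag is real and not closed by the sketch. The inductive cofiber-sequence argument you outline produces, in degree $*\rho-1$, an exact sequence whose outer terms are $\pi^{\widehat{\mathbb{T}}^{n-1}}_{*\rho-1}(M\mathbb{R})$ (handled by induction) and $\pi^{\widehat{\mathbb{T}}^n}_{(*-1)\rho-1}(M\mathbb{R})$; the latter is the same group you are trying to kill, merely shifted by $\rho$, so the induction does not terminate. Your proposed fix---building a polynomial-type presentation of $\pi^{\widehat{\mathbb{T}}^n}_{*\rho}(M\mathbb{R})$ over $MU_*$---is essentially equivalent in difficulty to proving the Evenness Conjecture directly: knowing such a presentation already tells you the ring is concentrated in $\rho$-degrees. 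Neither the Real formal group law structure of Proposition~\ref{Real FGL for Real-oriented spectrum} nor the geometric-fixed-points isomorphism of Theorem~\ref{thm:res-induces-iso-on-geom-fixed-points} gives you enough to avoid this, because both operate only after you already control the $*\rho$-graded piece (or its localization), not the off-$\rho$ degrees where the obstruction lives. The two approaches the paper mentions are different in kind: they attack the problem by external input (a completion theorem, or a finiteness statement about Euler-divisibility) rather than by induction on rank.
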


For simplicity, we denote the Real $\hat{G}$-representation \[\infty\bar{\hat{\rho}}=\underset{\hat{\alpha} \in Irr(\hat{G})}{\oplus}\infty \hat{\alpha} \]
where $Irr^*(\hat{G})$ denotes the countable set of non-trivial irreducible Real representations of $\hat{G}$ and $\infty \hat{\alpha}$ denotes an countable direct sum of copies of $\hat{\alpha}$. Let $Irr^*(G)$ be the countable set of non-trivial irreducible complex representations of $G$. Let $\infty\bar{\hat{\rho}}'=\underset{{\alpha} \in Irr({G})}{\oplus}\infty \hat{\alpha}$ where $\hat{\alpha}$ is a fixed chosen lift of $\alpha$ along $res:RR(\eta)\to RU(G)$. By Lemma \ref{lem:RR-RU-identification} we obviously have $S^{\infty\bar{\hat{\rho}}}\simeq S^{\infty\bar{\hat{\rho}}'}$.

\begin{lemma}\label{lem G-geo fixed point of hatG spectrum}
    Let $G=\mathbb{T}^n$ and $\hat{G}=G\rtimes C_2$. Then we have
    \[
    S^{\infty\bar{\hat{\rho}}'}\simeq S^{\infty\bar{\hat{\rho}}} \simeq \widetilde{E\mathcal{F}[G]}
    \]
    and thus $(E\wedge S^{\infty\bar{\hat{\rho}}})^G\simeq \Phi^G(E)$ for a $\hat{G}$-spectrum $E$. Here $\mathcal{F}[G]$ is the family of subgroups of $\hat{G}$ that does not contain $G$.
\end{lemma}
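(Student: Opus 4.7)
The plan is to verify the standard characterization of $\widetilde{E\mathcal{F}[G]}$ by computing $H$-fixed points of $S^{\infty\bar{\hat{\rho}}}$ for each closed subgroup $H \subseteq \hat{G}$, and then to deduce the geometric-fixed-points statement by restricting along $G \subseteq \hat{G}$ and identifying the restriction with the usual model for $\Phi^G$.

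The preliminary equivalence $S^{\infty\bar{\hat{\rho}}'}\simeq S^{\infty\bar{\hat{\rho}}}$ follows directly from Lemma \ref{lem:RR-RU-identification}: the restriction $RR(\hat{G}) \to RU(G)$ is an isomorphism of free abelian groups on iso classes of irreducibles, hence sets up a bijection between non-trivial irreducible Real $\hat{G}$-representations and non-trivial irreducible complex $G$-representations, realized precisely by the chosen lifts defining $\bar{\hat{\rho}}'$.

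For the main fixed-point calculation, since each irreducible appears with infinite multiplicity, the fixed subspace $(\infty\bar{\hat{\rho}})^H$ is either zero or infinite-dimensional, so $(S^{\infty\bar{\hat{\rho}}})^H$ is either $S^0$ or contractible. I would handle the two cases as follows. When $H \supseteq G$ (equivalent to $H=G$ or $H=\hat{G}$, since $[\hat{G}:G]=2$), every non-trivial Real $\hat{G}$-representation $\hat{\alpha}$ restricts to a non-trivial complex character of the connected torus $G=\mathbb{T}^n$, so has no nonzero fixed vector; thus $(\infty\bar{\hat{\rho}})^H=0$. When $H\not\supseteq G$, the intersection $H\cap G$ is a proper closed subgroup of $\mathbb{T}^n$, so the compact abelian Lie group $G/(H\cap G)$ admits a non-trivial character $\alpha$. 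Set $\hat{\alpha}=\alpha \rtimes C_2$: then $\hat{\alpha}$ acts trivially on $H\cap G$, while an element $h=(g_0,\gamma)\in H\setminus G$ acts on $z\in\mathbb{C}$ as $z \mapsto \alpha(g_0)\bar{z}$, whose fixed locus is a real line in $\mathbb{C}$. A short calculation using $h_1h_2^{-1}\in H\cap G$ for $h_1,h_2\in H\setminus G$ together with $\alpha|_{H\cap G}=1$ shows that all such elements share the same fixed real line, so $\hat{\alpha}^H\neq 0$, and $(\infty\bar{\hat{\rho}})^H$ is infinite-dimensional.

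These fixed-point computations match the universal $H$-fixed-point characterization of $\widetilde{E\mathcal{F}[G]}$, which yields $S^{\infty\bar{\hat{\rho}}}\simeq \widetilde{E\mathcal{F}[G]}$ in $\hat{G}$-spaces. For the second claim, restrict along $G\hookrightarrow \hat{G}$: for a subgroup $K\subseteq G$ the condition $K\supseteq G$ reduces to $K=G$, so $\widetilde{E\mathcal{F}[G]}\big|_G$ has $K$-fixed points $S^0$ for $K=G$ and contractible otherwise, which is precisely the standard model for $\widetilde{E\mathcal{P}}$ with $\mathcal{P}$ the family of proper subgroups of $G$. Smashing with $E$ and taking $G$-fixed points gives $(E\wedge S^{\infty\bar{\hat{\rho}}})^G\simeq (E\wedge \widetilde{E\mathcal{P}})^G = \Phi^G(E)$.

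The main obstacle is the subgroup case $H\not\supseteq G$ with $H\not\subseteq G$: individual elements of $H\setminus G$ each fix a real line in $\hat{\alpha}$, and one must verify these lines coincide for different such elements. This is a purely algebraic check inside $\hat{\mathbb{T}}$, using the semidirect product relations $(w,\gamma)^2=1$ and the vanishing of $\alpha$ on $H\cap G$. Once this is in hand, the rest of the argument is a routine application of the fixed-point characterization of $\widetilde{E\mathcal{F}[G]}$ and the definition of $\Phi^G$.
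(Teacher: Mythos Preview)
Your proof is correct and follows essentially the same approach as the paper: verify the fixed-point characterization of $\widetilde{E\mathcal{F}[G]}$ by, in the case $H\not\supseteq G$, choosing a non-trivial character of $G/(H\cap G)$, lifting it to a Real $\hat{G}$-representation via $-\rtimes C_2$, and exhibiting a nonzero $H$-fixed vector. The paper splits the case $H\not\supseteq G$ into three subcases ($H\subseteq G$; $(1,\gamma)\in H$; $(1,\gamma)\notin H$ but $H$ surjects onto $C_2$), whereas you treat them uniformly via the ``common fixed real line'' computation for elements of $H\setminus G$---this is a minor organizational difference, and your verification that all such elements share the same fixed line (using $\alpha(g_0)=\alpha(g_1)$ from $g_0g_1^{-1}\in H\cap G$) is exactly what makes the uniform treatment work.
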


\begin{proof}
        Let $\hat{H}\subset \hat{G}$ be a closed subgroup. Clearly, if $G\subset \hat{H}$, then $(\infty\bar{\hat{\rho}})^{\hat{H}}=0$ and $(S^{\infty\bar{\hat{\rho}}})^{\hat{H}}=S^0$. Now assume $G\not\subset \hat{H}$, we want to show that $(\bar{\hat{\rho}})^{\hat{H}}\neq 0$, so that $(S^{\infty\bar{\hat{\rho}}})^{\hat{H}}\simeq *$, and the claim follows.
        
        If $\hat{H}$ is trivially augmented, then we have $\hat{H}=H\subsetneq G$ and any non-trivial irreducible complex $G/H$-representation gives rise to a Real $\hat{G}$-representation (see Lemma \ref{lem:RR-RU-identification}) $\hat{V}$ such that $(\hat{V})^{\hat{H}}=\C \neq 0$.
        
        Now we can assume $\hat{H}\to C_2$ is surjective. Let $H=\{h \in G|(h,1)\in\hat{H}\}$, then $H\subset G$ is a proper subgroup. If $(1,\gamma)\in \hat{H}$, then $\hat{H}=H\rtimes C_2$, and any non-trivial irreducible complex $G/H$-representation gives rise to a Real $\hat{G}$-representation with non-zero $\hat{H}$-fixed point as above. If $(1,\gamma)\notin \hat{H}$, choose an odd element $(g_0,\gamma)\in\hat{H}$. Then we have $\hat{H}=H\coprod H(g_0,\gamma)$. Let $W$ be any non-trivial irreducible complex $G/H$-representation regarded as a $G$-representation. Then $(g_0,\gamma)$ acts as an order two automorphism of $W$ which is conjugation linear, i.e., it is a Real structure on $W$. Then $W^{\hat{H}}\cong\R\neq 0$ and we are done.
\end{proof}

\begin{proposition}
\label{prop:algebraic-gfp-are-same-as-topological-gfp}
    The geometric fixed points $\Phi^{\mathbb{T}^n} \mathbf{R}$ defined in \cite{Hau22} as $\mathbf{R}(\mathbb{T}^n)[(V^*e^{-1})]$ where $V$ runs through all surjections $V : \mathbb{T}^n \rightarrow \mathbb{T}$ (not necessarily split) may be computed as \[ \pi^{C_2}_{\rho *}(\Phi^{\mathbb{T}^n} M \mathbb{R}_{\widehat{\mathbb{T}}^n}) \] where $\Phi^{\mathbb{T}^n}$ denotes the usual $\mathbb{T}^n$-geometric fixed points of a $\widehat{\mathbb{T}}^n$-spectrum, equipped with its residual $C_2 = W_{\widehat{\mathbb{T}}^n} \mathbb{T}^n$-action.
\end{proposition}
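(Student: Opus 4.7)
The plan is to express both sides as the same localization of $\mathbf{R}(\mathbb{T}^n)$, obtained by inverting the Euler classes of non-trivial characters $V : \mathbb{T}^n \to \mathbb{T}$, and then observe that the set of non-trivial characters agrees with the set of split surjective characters appearing in Hausmann's algebraic definition of $\Phi^{\mathbb{T}^n}\mathbf{R}$.

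First, I identify the topological side. Smashing with $\widetilde{E\mathcal{F}[\mathbb{T}^n]}$ makes $H$-fixed points contractible for every $H \not\supseteq \mathbb{T}^n$, so a standard argument (the $\widehat{\mathbb{T}}^n$-homotopy theory of a spectrum with this property is equivalent to the $C_2 = \widehat{\mathbb{T}}^n/\mathbb{T}^n$-homotopy theory of the $\mathbb{T}^n$-fixed points) yields
\[
\pi^{C_2}_{\rho*}\bigl(\Phi^{\mathbb{T}^n} M\R_{\widehat{\mathbb{T}}^n}\bigr) \;\cong\; \pi^{\widehat{\mathbb{T}}^n}_{\rho*}\bigl(M\R_{\widehat{\mathbb{T}}^n} \wedge \widetilde{E\mathcal{F}[\mathbb{T}^n]}\bigr),
\]
where the $RO(C_2)$-grading is compatible via the projection $\widehat{\mathbb{T}}^n \to C_2$. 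Then by Lemma \ref{lem G-geo fixed point of hatG spectrum} I may model $\widetilde{E\mathcal{F}[\mathbb{T}^n]}$ by $S^{\infty\bar{\hat\rho}'}$, which I write as a filtered colimit $\mathrm{colim}_k\, S^{\hat V_1 \oplus \cdots \oplus \hat V_k}$ indexed over an enumeration of chosen Real lifts (via Lemma \ref{lem:RR-RU-identification}) of the non-trivial irreducible complex characters $V_i : \mathbb{T}^n \to \mathbb{T}$, with each repeated infinitely often to ensure cofinality. Commuting $\pi^{\widehat{\mathbb{T}}^n}_{\rho*}$ past this filtered colimit is the expected compactness argument.

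Next, I invoke the Real $\widehat{\mathbb{T}}^n$-orientation of $M\R_{\widehat{\mathbb{T}}^n}$ (from Corollary \ref{cor:MR-Real-orid} and Proposition \ref{prop: Real global orientation induces Real hat{G} orientation}). By Remark \ref{rem:Real-ori-determines-Real-stable-structure} and the Thom-class formalism of Proposition \ref{PropertiesOfOrientationClasses}, each $\hat V_i$ contributes a unit $\sigma_{\hat V_i} \in \pi^{\widehat{\mathbb{T}}^n}_{\hat V_i - \rho}(M\R_{\widehat{\mathbb{T}}^n})$, so that
\[
\pi^{\widehat{\mathbb{T}}^n}_{\rho*}\bigl(M\R_{\widehat{\mathbb{T}}^n} \wedge S^{\hat V_1 \oplus \cdots \oplus \hat V_k}\bigr) \;\cong\; \pi^{\widehat{\mathbb{T}}^n}_{(*-k)\rho}\bigl(M\R_{\widehat{\mathbb{T}}^n}\bigr).
\]
Under these trivializations, the structure map induced by the inclusion $S^0 \to S^{\hat V_{k+1}}$ (i.e.\ the sphere-level Euler class $a_{\hat V_{k+1}}$) becomes multiplication by the Euler class $V_{k+1}^*e \in \pi_{-\rho}^{\widehat{\mathbb{T}}^n}(M\R_{\widehat{\mathbb{T}}^n})$ of the Real global group law. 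The filtered colimit therefore computes the localization $\mathbf{R}(\mathbb{T}^n)\bigl[(V^*e)^{-1} : V \in \mathrm{Irr}^*(\mathbb{T}^n)\bigr]$.

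Finally, I observe that every non-trivial character $V : \mathbb{T}^n \to \mathbb{T}$ is automatically surjective---its image is a non-trivial closed connected subgroup of $\mathbb{T}$, hence all of $\mathbb{T}$---and that every surjection of tori splits. Hence the non-trivial characters coincide with the split surjective characters, and the resulting localization matches Hausmann's definition of $\Phi^{\mathbb{T}^n}\mathbf{R}$ in \cite{Hau22}. The main obstacle I anticipate is the third step: one must carefully check that under Real stability the transition maps of the filtered colimit of sphere smash products correspond, on the nose (or at least up to units already present in the localization), to multiplication by the specific Euler classes $V^*e$ defining the Real global group law structure.
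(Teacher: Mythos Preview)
Your proposal is correct and follows the same route the paper intends: the paper's proof is the single sentence ``This follows from Lemma~\ref{lem G-geo fixed point of hatG spectrum},'' and your argument is precisely the unpacking of that sentence---identify $\widetilde{E\mathcal{F}[\mathbb{T}^n]}$ with $S^{\infty\bar{\hat\rho}'}$, use the Real orientation/Thom classes from Proposition~\ref{PropertiesOfOrientationClasses} to rewrite the resulting filtered colimit as inverting the Euler classes $V^*e$, and note that for a torus the non-trivial characters are exactly the surjective ones. The step you flag as the main obstacle (matching transition maps with multiplication by $V^*e$) is handled by the unit property (5) of Proposition~\ref{PropertiesOfOrientationClasses} together with the multiplicativity in (4), so there is no real gap.
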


\begin{proof}
    This follows from Lemma \ref{lem G-geo fixed point of hatG spectrum}.
\end{proof}

\begin{theorem}
\label{thm:res-induces-iso-on-geom-fixed-points}
    Let $\hat{G} = G \rtimes C_2$ for $G$ any compact abelian Lie group. Then the restriction map $\Phi^G(res^{\widehat{G}}_{G})$ given by 
    \[\pi_{*\rho}^{C_2}\Phi^G(M\R_{\hat{G}})\cong [S^{*\rho},M\R_{\hat{G}}\wedge \widetilde{E\mathcal{F}[G]}]^{\hat{G}}\to [S^{2*},MU_G\wedge \widetilde{E\mathcal{P}}]^G\cong \pi_{2*}\Phi^G MU_G\] is an isomorphism.
\end{theorem}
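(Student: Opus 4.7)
The strategy is to identify the $C_2$-spectrum $\Phi^G M\R_{\hat{G}}$ (with its residual Weyl $\hat{G}/G \cong C_2$-action) with the Real spectrum $\mathbf{MR}$, and simultaneously identify $\Phi^G MU_G$ with $\mathbf{MU}$, in such a way that the restriction $\Phi^G(\mathrm{res}^{\hat{G}}_G)$ is carried to the underlying-spectrum functor $\mathbf{MR} \to \mathbf{MU}$. Granting such compatible equivalences, the map of the theorem on homotopy groups becomes $\mathrm{res}^{C_2}_e : \pi^{C_2}_{*\rho}\mathbf{MR} \to \pi_{2*}\mathbf{MU}$, which is the Hu--Kriz isomorphism of \cite{HK01}.

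The core step is constructing the equivariant equivalence $\Phi^G M\R_{\hat{G}} \simeq \mathbf{MR}$. I would do this using the point-set model of Definition \ref{def: MR} combined with Lemma \ref{lem G-geo fixed point of hatG spectrum}, which identifies $\widetilde{E\mathcal{F}[G]}$ with $S^{\infty\bar{\hat{\rho}}'}$. Smashing with this sphere amounts to inverting the Euler classes of all Real characters of $\hat{G}$ that restrict non-trivially to $G$. At the level of the Grassmannians $\mathbf{BR}(\hat{V}) = Gr_{\dim_{\C}\hat{V}_{\C}}(\hat{V}_{\C}^{2})$, this localization concentrates attention on the $G$-fixed subspace $(\hat{V})^G \subset \hat{V}$, which is itself a Real $\hat{G}/G \cong C_2$-representation carrying the residual Galois action by complex conjugation. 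A cofinality argument in the style of Remark \ref{rem:weak-equivalences-are-the-same} then matches the localized orthogonal $C_2$-spectrum with $\mathbf{MR}$ evaluated on a complete Real $C_2$-universe.

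The bottom identification $\Phi^G MU_G \simeq \mathbf{MU}$ is an instance of tom Dieck's classical geometric fixed-point computation, and an analogous (simpler) version of the argument above delivers it. Under the two identifications, the restriction along $G \hookrightarrow \hat{G}$ becomes, on Grassmannian Thom spaces, simply the forgetting of the complex-conjugation Galois action on each $\hat{V}_{\C}$, which is precisely the map of underlying $C_2$-spectra $\mathbf{MR} \to \mathbf{MU}$ appearing in the Hu--Kriz theorem.

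The main obstacle is the first equivalence $\Phi^G M\R_{\hat{G}} \simeq \mathbf{MR}$. Three points deserve careful handling: (i) computing the $G$-fixed part of each Grassmannian $\mathbf{BR}(\hat{V})$ and absorbing the normal-bundle contribution as an invertible Real representation after Euler-class localization; (ii) verifying the cofinality argument in the Real setting, so that $(\hat{V})^G$, as $\hat{V}$ ranges over a complete Real $\hat{G}$-universe, traverses a cofinal family of Real $C_2$-representations in a complete Real $C_2$-universe; and (iii) confirming that the residual Weyl $C_2$-action on $\Phi^G M\R_{\hat{G}}$ agrees with the complex-conjugation Real structure defining $\mathbf{MR}$. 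Item (iii) is the crucial geometric input that makes Hu--Kriz applicable.
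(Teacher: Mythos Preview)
There is a genuine gap: your core identifications $\Phi^G M\R_{\hat{G}} \simeq \mathbf{MR}$ and $\Phi^G MU_G \simeq \mathbf{MU}$ are false whenever $G$ is nontrivial. The tom Dieck computation you invoke does \emph{not} say $\Phi^G MU_G \simeq MU$; rather it gives
\[
\Phi^G MU_G \;\simeq\; \Bigl( \bigvee_{W \in I_{R(G)}} S^{\dim_\R W} \Bigr) \wedge MU \wedge \Bigl( \prod_{V \in Irr^*(G)} BU \Bigr),
\]
so that $\pi_{2*}\Phi^G MU_G$ is a polynomial algebra over $MU_{2*}$ on infinitely many generators. The error is in your step (i): the $G$-fixed locus of the Grassmannian $Gr_n(\hat{V}_\C^{2})$ is not a single smaller Grassmannian on $(\hat{V})^G$, but a disjoint union of \emph{products} of Grassmannians, one factor for each isotypic summand of $\hat{V}_\C^{2}$. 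The factors coming from the nontrivial isotypic pieces contribute the spaces $BU$ (respectively $B\R$ in the Real case); they do not reduce to invertible Thom classes that disappear after Euler-class localization.

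The paper's proof follows exactly this route. It carries out the Real analogue of Sinha's fixed-point computation on the Thom spaces levelwise, tracking the residual Weyl $C_2$-action, and obtains
\[
\Phi^G M\R_{\hat{G}} \;\simeq\; \Bigl( \bigvee_{W \in I_{R\R(\hat{G})}} S^{\rho \cdot \dim_\C W} \Bigr) \wedge M\R \wedge \Bigl( \prod_{V \in Irr^*(\hat{G})} B\R \Bigr).
\]
After matching indexing sets via Lemma~\ref{lem:RR-RU-identification}, the restriction map becomes the Hu--Kriz map on the $M\R$ factor together with the restriction $M\R^{C_2}_{*\rho}\bigl(\prod B\R\bigr) \to MU_{2*}\bigl(\prod BU\bigr)$ on the remaining factors. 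A separate lemma---using the Real Thom isomorphism and freeness of $M\R^{C_2}_{*\rho}B\R$ over $M\R^{C_2}_{*\rho}$---is then needed to show this second map is also an isomorphism. Your outline is thus correct in spirit (reduce to Hu--Kriz), but you must replace the incorrect equivalences by the full decompositions above and supply the additional argument handling the $B\R$ factors.
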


\begin{proof}
     We will follow the geometric fixed points computation of Sinha \cite{Sin01} (although the original computation was done by tom Dieck \cite{tom70}). 
    
     Write $\xi_n^{\hat{G}}$ for the canonical Real $\hat{G}$-bundle of complex dimension $n$. Note that the proof of \cite[Lemma 4.7]{Sin01} goes through to compute the $G$ fixed point space of the canonical Real $\hat{G}$-bundle of complex dimension $n$. This fixed point space possesses a residual $C_2$-action (the Weyl group action), which is explicitly given by \[ \bigvee_{W \in R\R^+(\hat{G})_n} T(\xi^{\hat{G}}_{|W^{G}|}) \wedge \left( \prod_{V \in Irr^*(\hat{G})} B \R (\nu_V(W)) \right) \] Here, $B \R$ denotes $BU$ with the $C_2$-action of complex conjugation, and $C_2$ acts on the Thom spaces $T(\xi^{\hat{G}}_{|W^{G}|})$ through the one-point compactification of the $C_2$-action on $\xi^{\hat{G}}_{|W^{G}|}$. Additionally, $R\R^+(\hat{G})_n$ denotes the set of complex dimension $n$ Real $\hat{G}$-representations, $Irr^*(\hat{G})$ denotes the set of nontrivial irreducible Real $\hat{G}$-representations, and $\nu_V(W)$ is the largest integer such that $\oplus_{\nu_V(W)} V$ is a summand of $W$.

     Now note that $M \R_{\hat{G}}$ is the colimit of the (suspension $G$-spectra of the) Thom spaces of the bundles $\xi_n^{\hat{G}}$. Geometric fixed points commute with this colimit and are computed on suspension spectra by actual fixed points, hence the preceeding paragraph implies \[ \Phi^G M \R_{\hat{G}} \cong \left( \bigvee_{W \in I_{R \R(\hat{G})}} S^{\rho \cdot (dim_{\C} W)} \right) \wedge M \R \wedge \left( \prod_{V \in Irr^*(\hat{G})} B \R \right) \] where $I_{R \R(\hat{G})}$ denotes the augmentation ideal of the Real representation ring of $\hat{G}$.
     Forgetting the $C_2$ action recovers the geometric fixed points of $MU$,
     which are given by \[ \Phi^G MU_G \cong \left( \bigvee_{W \in I_{R (G)}} S^{dim_{\R} W} \right) \wedge MU \wedge \left( \prod_{V \in Irr^*(G)} BU \right) \] (where we implicitly use Lemma \ref{lem:RR-RU-identification} to identify Real $\hat{G}$-representations with complex $G$-representations).

     Now it remains to check that the restriction map from $C_2$ to the trivial group induces an isomorphism $\pi_{\rho *}(\Phi^G M \R_{\hat{G}}) \cong \pi_{2*} (\Phi^G MU_G)$. When $\hat{G} = \{e\} \rtimes C_2$, this is the Hu-Kriz isomorphism $\pi_{\rho *}(M \R) \cong \pi_{2 *}(MU)$ \cite{HK01}. When $\hat{G}$ is nontrivial, the result follows from the lemma below.
\end{proof}

\begin{lemma}
The restriction map
\[
res^{C_2}_e:M\R_{*\rho}^{C_2}B\R\to MU_{2*}BU
\]
is an isomorphism. Thus
\[
res^{C_2}_e:M\R_{*\rho}^{C_2}(\underset{I}{\prod} B\R)\to MU_{2*}(\underset{I}{\prod} BU)
\]
is also an isomorphism for $I$ a countable set.
\end{lemma}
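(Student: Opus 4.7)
The plan is to reduce both sides of the claimed isomorphism to the coefficient rings, where the Hu-Kriz isomorphism $\pi^{C_2}_{*\rho}M\R \cong \pi_{2*}MU$ does the work. The key observation is that $BU(n)$ carries a Schubert cell decomposition in which each cell is a copy of $\C^k$ for some $k$, and the natural complex-conjugation action makes $B\R(n)$ into a $C_2$-CW complex all of whose cells have the shape $D(k\rho)$. The associated skeletal cofiber sequences are of the form
\[ B\R(n)^{(k-1)}_+ \to B\R(n)^{(k)}_+ \to \bigvee S^{k\rho}, \]
and $res^{C_2}_e$ is compatible with the resulting long exact sequences in $M\R^{C_2}_{*\rho}$ and $MU_{2*}$.

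First I would induct up the skeletal filtration and apply the five-lemma, with the base case supplied by the Hu-Kriz isomorphism applied to each wedge summand $S^{k\rho}$, to obtain the isomorphism for each $B\R(n)$. Passing to the filtered colimit $n \to \infty$ then yields the single-factor statement. A byproduct of this argument is that $M\R^{C_2}_{*\rho}B\R$ is a free $M\R^{C_2}_{*\rho}$-module with a basis corresponding under $res^{C_2}_e$ to the standard basis of $MU_{2*}BU$ over $MU_{2*}$.

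For the product statement, freeness of $M\R^{C_2}_{*\rho}B\R$ supplies a Künneth isomorphism for finite products, so an induction on the number of factors reduces finite products to the single-factor case. For a countable index set $I$, one may proceed either by writing $\prod_I B\R = \lim_n \prod_{i \le n} B\R$ and controlling the resulting $\lim^1$-term using the freeness established above, or equivalently by equipping $\prod_I B\R$ with its natural product cell structure — whose cells are still of shape $D(k\rho)$ since products of Real Schubert cells are Real Schubert cells — and rerunning the skeletal induction directly on the infinite product.

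The main technical hurdle will be handling the countable product rigorously: one must either rule out $\lim^1$ obstructions when passing to the limit over finite subproducts, or verify that the product cell filtration behaves well enough for the five-lemma induction to go through in the limit. In either approach, the key input is the freeness of $M\R^{C_2}_{*\rho}B\R$ over $M\R^{C_2}_{*\rho}$ together with its compatibility with $res^{C_2}_e$, which is what prevents any derived/$\lim^1$-type obstructions from appearing.
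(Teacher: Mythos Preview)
Your approach to the single-factor isomorphism is correct but takes a different route from the paper. The paper invokes the Hill--Hopkins--Ravenel splitting $M\R \wedge M\R \simeq M\R[\bar b_1,\bar b_2,\dots]$ together with the Real Thom isomorphism of Fujii to obtain $M\R^{C_2}_{*\rho}B\R \cong M\R^{C_2}_{*\rho}[\bar b_1,\bar b_2,\dots]$ in one stroke, then matches the generators $\bar b_i \mapsto b_i$ under restriction and applies the Hu--Kriz isomorphism on coefficients. Your Schubert-cell induction is more self-contained and avoids those external citations, at the cost of a longer argument; both routes yield the same freeness statement needed for the product step.

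For the countable product your outline has a genuine gap. The $\lim^1$ route would require commuting $M\R \wedge (-)$ past the inverse limit $\prod_I B\R_+ \simeq \operatorname{holim}_n \prod_{i\le n} B\R_+$, but smashing with a non-dualizable spectrum does not preserve homotopy limits, so no Milnor-type sequence is available for $M\R^{C_2}_{*\rho}(\prod_I B\R)$. Your alternative, the ``natural product cell structure'', filters only the \emph{weak} (restricted) product, whose homotopy type genuinely differs from the full product when $I$ is infinite (already $\pi_2$ distinguishes them). The paper's printed proof is equally terse here, simply invoking ``the K\"unneth formula''; one clean way to make the passage rigorous is to filter $M\R$ itself by finite, hence dualizable, $C_2$-subspectra $M\R^{(k)}$, so that $M\R^{(k)} \wedge \prod_I B\R_+ \simeq \prod_I\bigl(M\R^{(k)} \wedge B\R_+\bigr)$, and then observe that in each fixed $\rho$-degree the resulting system in $k$ stabilizes.
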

\begin{proof}
From \cite[Corollary 5.18]{HHR}, we get an equivalence of $C_2$-spectra
\[
E\wedge M\R\simeq E[\bar{b}_1,\bar{b}_2,\cdots]
\]
for Real-oriented $C_2$-spectra $E$. Take $E=M\R$ and taking homotopy groups we get
\[
M\R_{*\rho}^{C_2}M\R\cong M\R_{*\rho}[\bar{b}_1,\bar{b}_2,\cdots].
\]
Using the Real Thom isomorphism from \cite[Theorem 4.5]{Fujii}, we get
\[
M\R_{*\rho}^{C_2}B\R\cong M\R_{*\rho}[\bar{b}_1,\bar{b}_2,\cdots].
\]
The generators $\bar{b}_i$ restricts to $b_i$ in
\[
MU_{2*}BU\cong MU_{2*}[b_1,b_2,\cdots].
\]
By the Hu-Kriz isomorphism $res^{C_2}_e:M\R_{*\rho}\cong MU_{2*}$ from \cite{HK01}, we deduce the first conclusion.

For the second claim, notice that $M\R^{C_2}_{*\rho}B\R$ is free over $M\R^{C_2}_{*\rho}$. Similarly $MU_*BU$ is free over $MU_*$. The Kunneth formula implies the claim.

\end{proof}

Now we give an alternative proof of Theorem \ref{thm: Hu-Kriz iso} for augmented-tori based on Conjecture \ref{conj:R-is-regular}.

\begin{proof}
    It suffices to show injectivity. Observe that we have the commutative square
    \begin{equation*}
    \begin{tikzcd}
    \mathbf{R}(\mathbb{T}^n) \arrow{r} \arrow{d} & \mathbf{L}(\mathbb{T}^n) \arrow{d} \\
    \Phi^{\mathbb{T}^n} \mathbf{R} \arrow{r} & \Phi^{\mathbb{T}^n} \mathbf{L}  
    \end{tikzcd}    
    \end{equation*}
    By Conjecture \ref{conj:R-is-regular} the left vertical arrow is injective, and by Theorem \ref{thm:res-induces-iso-on-geom-fixed-points} and Proposition \ref{prop:algebraic-gfp-are-same-as-topological-gfp}, the lower horizontal map is injective. Thus the upper horizontal map is injective, as desired.
\end{proof}

\subsection{The Evenness Conjecture implies the Regularity Conjecture}
\label{sec: Evenness Conj implies Regularity Conj}

In this section, we prove that Conjecture \ref{conj:MR-eta-is-even} implies Conjecture \ref{conj:R-is-regular}. The following may be thought of as a Real analogue of \cite[Lemma 5.23]{Hau22}.

\begin{proposition}\label{prop: Evenness Conj implies Regularity Conj}
    Conjecture \ref{conj:MR-eta-is-even} implies Conjecture \ref{conj:R-is-regular}. On the other hand, if Conjecture \ref{conj:R-is-regular} holds, then \[ \pi^G_{*\rho-1}M\R_G=0 \] whenever $G=\widehat{\mathbb{T}}^n$ for some $n$.
\end{proposition}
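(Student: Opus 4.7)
The plan is to exploit the cofiber sequence $(\hat{G}/G)_+ \to S^0 \to S^\sigma$ of $\hat{G}$-spaces (with $\hat{G} = G \rtimes C_2$). After smashing with $M\mathbb{R}_{\hat{G}}$ and applying the Wirthm\"uller identification $(\hat{G}/G)_+ \wedge M\mathbb{R}_{\hat{G}} \simeq \hat{G}_+ \wedge_G MU_G$, one obtains for each $\beta \in RO(C_2)$ a long exact sequence
\[
\cdots \to \pi^G_{\beta+1}(MU_G) \to \pi^{\hat{G}}_{\beta+\sigma}(M\mathbb{R}_{\hat{G}}) \xrightarrow{a_\sigma \cdot} \pi^{\hat{G}}_\beta(M\mathbb{R}_{\hat{G}}) \xrightarrow{\mathrm{res}} \pi^G_\beta(MU_G) \to \cdots.
\]
The bridge between the two conjectures is that $\rho$-periodicity (Proposition \ref{prop:MRPeta splits}) together with the identity $*\rho + \sigma = (*+1)\rho - 1$ in $RO(C_2)$ makes the Evenness vanishing $\pi^G_{*\rho-1}M\mathbb{R}_\eta = 0$ equivalent to $\pi^G_{*\rho+\sigma}M\mathbb{R}_\eta = 0$.

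For the forward direction, I would argue as follows. Under the Evenness hypothesis, $\pi^{\widehat{\mathbb{T}^n}}_{*\rho+\sigma}M\mathbb{R}_{\widehat{\mathbb{T}^n}} = 0$, so setting $\beta = *\rho$ in the long exact sequence above forces injectivity of the restriction $\pi^{\widehat{\mathbb{T}^n}}_{*\rho}M\mathbb{R}_{\widehat{\mathbb{T}^n}} \to \pi^{\mathbb{T}^n}_{2*}MU_{\mathbb{T}^n}$. Composing with the injection $\mathbf{L}(\mathbb{T}^n) \hookrightarrow \Phi^{\mathbb{T}^n}\mathbf{L}$ provided by Hausmann's regularity of $\mathbf{L}$ and with the isomorphism $\Phi^{\mathbb{T}^n}\mathbf{L} \cong \Phi^{\mathbb{T}^n}\mathbf{R}$ of Theorem \ref{thm:res-induces-iso-on-geom-fixed-points} yields the principal consequence $\mathbf{R}(\mathbb{T}^n) \hookrightarrow \Phi^{\mathbb{T}^n}\mathbf{R}$ of Regularity. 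The remaining clauses of Hausmann's Definition 5.9 (concerning Euler-class multiplication sequences attached to arbitrary surjective characters onto compact abelian Lie groups that are not tori) are then proved by iterating the same long exact sequence after restricting to the relevant kernel subgroup, which is always quasi-abelian as an augmented subgroup of a quasi-abelian augmented torus, and invoking Evenness at that subgroup at each stage.

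For the backward direction, assume Regularity and fix $\hat{G} = \widehat{\mathbb{T}^n}$, $G = \mathbb{T}^n$. Injectivity of $\mathbf{R}(\mathbb{T}^n) \to \Phi^{\mathbb{T}^n}\mathbf{R}$, combined with the isomorphism $\Phi^{\mathbb{T}^n}\mathbf{R} \cong \Phi^{\mathbb{T}^n}\mathbf{L}$ of Theorem \ref{thm:res-induces-iso-on-geom-fixed-points} and the factorization through $\pi^{\mathbb{T}^n}_{2*}(MU_{\mathbb{T}^n})$, yields injectivity of the restriction map $\pi^{\widehat{\mathbb{T}^n}}_{*\rho}(M\mathbb{R}_{\widehat{\mathbb{T}^n}}) \to \pi^{\mathbb{T}^n}_{2*}(MU_{\mathbb{T}^n})$. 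In the long exact sequence above with $\beta = *\rho$, the leftmost term $\pi^{\mathbb{T}^n}_{*\rho+1}(MU_{\mathbb{T}^n}) = \pi^{\mathbb{T}^n}_{2*+1}(MU_{\mathbb{T}^n})$ vanishes by the classical evenness of $MU_{\mathbb{T}^n}$, so the long exact sequence begins
\[
0 \to \pi^{\widehat{\mathbb{T}^n}}_{*\rho+\sigma}(M\mathbb{R}_{\widehat{\mathbb{T}^n}}) \xrightarrow{a_\sigma} \pi^{\widehat{\mathbb{T}^n}}_{*\rho}(M\mathbb{R}_{\widehat{\mathbb{T}^n}}) \xrightarrow{\mathrm{res}} \pi^{\mathbb{T}^n}_{2*}(MU_{\mathbb{T}^n}),
\]
and the established injectivity of restriction forces $\pi^{\widehat{\mathbb{T}^n}}_{*\rho+\sigma}(M\mathbb{R}_{\widehat{\mathbb{T}^n}}) = 0$. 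By the identity $*\rho + \sigma = (*+1)\rho - 1$, this is precisely the desired Evenness vanishing.

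The main obstacle is in the forward direction: the principal injectivity $\mathbf{R}(\mathbb{T}^n) \hookrightarrow \Phi^{\mathbb{T}^n}\mathbf{R}$ is immediate from a single long exact sequence computation, but verifying the remaining clauses of Hausmann's regularity condition requires bookkeeping with auxiliary quasi-abelian augmented subgroups and careful application of Evenness subgroup-by-subgroup. The backward direction, by contrast, is a short exact-sequence chase hinging on the fortunate vanishing of $\pi^{\mathbb{T}^n}_{2*+1}(MU_{\mathbb{T}^n})$, which is exactly what upgrades ``image zero'' (obtained from Regularity) to ``source zero'' (the Evenness claim).
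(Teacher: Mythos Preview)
Your backward direction is essentially the paper's argument: Regularity gives injectivity of the restriction $\mathbf{R}(\mathbb{T}^n)\to\mathbf{L}(\mathbb{T}^n)$ (via the commutative square with geometric fixed points and Theorem~\ref{thm:res-induces-iso-on-geom-fixed-points}), and then the long exact sequence for $(\hat G/G)_+\to S^0\to S^\sigma$, together with $\pi^{\mathbb{T}^n}_{2*+1}MU_{\mathbb{T}^n}=0$, forces $\pi^{\widehat{\mathbb{T}^n}}_{*\rho+\sigma}M\R_{\widehat{\mathbb{T}^n}}=0$, which is the Evenness vanishing after the degree shift $*\rho+\sigma=(*+1)\rho-1$.

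In the forward direction you work harder than necessary and leave a gap. The paper's argument is two lines: the Evenness Conjecture has already been shown (Theorem~\ref{thm: Hu-Kriz iso}) to imply that the restriction $\mathbf{R}\to\mathbf{L}$ is an \emph{isomorphism} of global group laws; since $\mathbf{L}$ is regular by \cite{Hau22}, so is $\mathbf{R}$. You redo half of that theorem (your long exact sequence gives injectivity of $\mathrm{res}$), but then, instead of combining it with the unconditional split surjection of Theorem~\ref{thm:split-surj} to conclude $\mathbf{R}\cong\mathbf{L}$ and transfer regularity wholesale, you attempt to verify Hausmann's regularity clauses directly. That attempt is only sketched (``iterating the same long exact sequence\ldots invoking Evenness at that subgroup''), and it is not obviously correct: for a general compact abelian $A$ the value $\mathbf{R}(A)$ is defined via Hausmann's extension procedure, not a priori as $\pi^{\hat A}_{*\rho}M\R_{\hat A}$, so checking that Euler classes act regularly on $\mathbf{R}(A)$ is not literally the same long exact sequence computation. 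The fix is to stop after your injectivity step, invoke the split surjection, and conclude $\mathbf{R}\cong\mathbf{L}$.
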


\begin{proof}
    The Evenness Conjecture implies that the restriction $\mathbf{R} \rightarrow \mathbf{L}$ is an isomorphism. Since $\mathbf{L}$ is regular \cite{Hau22}, so is $\mathbf{R}$.

    On the other hand, the Regularity Conjecture implies the restriction isomorphism for $G=\widehat{\mathbb{T}}^n$. Then in the long exact sequence     \begin{equation}\label{eq:a_sigma long sequence}
    \cdots\to \pi^G_{*\rho+\sigma}M\R_{G}\xrightarrow{a_{\sigma}} \pi^G_{*\rho}M\R_{G} \xrightarrow{res^G_{\mathbb{T}^n}}\pi^{\mathbb{T}^n}_{2*}MU_{\mathbb{T}^n}\to\cdots   
    \end{equation}
    the left term $\pi^{\mathbb{T}^n}_{2*+1}MU_{\mathbb{T}^n}=0$, which implies $\pi^G_{*\rho+\sigma}M\R_G=0$. Using the same arguments as in the proof of Theorem \ref{thm: Hu-Kriz iso}, we deduce $\pi^G_{*\rho-1}M\R_G=0$.
\end{proof}

The authors know of two possible methods one might use to prove either conjecture. Classically, one may deduce evenness of $MU_A^*$ via an Atiyah-Segal completion theorem. One might hope to prove an $RO(C_2)$-graded Atiyah-Segal completion theorem for $M \mathbf{R}_\eta$. On the other hand, there is an alternative argument (adapted from an unpublished argument of Markus Hausmann which gives an alternate proof of the evenness of $MU_A^*$) that requires only the nonexistence of infinitely Euler-divisible elements in $M \mathbf{R}_{\eta}^{\rho *}$.

\begin{theorem}
    If Conjecture \ref{conj:MR-eta-is-even} holds (and hence both Conjectures hold), then the restriction map
    \[
    Res^{\hat{G}}_G : \pi^{\hat{G}}_{*\rho}(M \mathbb{R}_{\hat{G}}) \rightarrow \pi^{G}_*(MU_G) 
    \]
    is an isomorphism for all compact abelian Lie groups $G$. Additionally, the above restriction being an isomorphism for all compact abelian Lie group $G$ is equivalent to Conjecture \ref{conj:MR-eta-is-even}.
\end{theorem}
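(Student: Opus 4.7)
The first assertion is already contained in Theorem \ref{thm: Hu-Kriz iso}, whose proof in the excerpt explicitly handles arbitrary compact abelian $G$ via the cofiber sequence $\hat{G}/G_+ \to S^0 \to S^\sigma$. The substantive new content is therefore the converse implication: assuming the restriction is an isomorphism for every compact abelian Lie group $G$, one must deduce Conjecture \ref{conj:MR-eta-is-even}.

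The plan is to verify the Evenness Conjecture case by case using Proposition \ref{prop:quasi-abelian}. In the trivially augmented case, $M\R_\eta = MU_H$ for a compact abelian $H$, and $\pi^H_{2*-1}(MU_H) = 0$ is the classical evenness result \cite{Lof73}\cite{Com96}. For the nontrivially augmented case $\hat{A} = A \rtimes C_2$ with $A$ compact abelian, the argument is a direct generalization of the second half of Proposition \ref{prop: Evenness Conj implies Regularity Conj} from augmented tori to all augmented compact abelians. Concretely, the cofiber sequence $(\hat{A}/A)_+ \to S^0 \to S^\sigma$ of based $\hat{A}$-spaces (using $\hat{A}/A \cong C_2$) is available for any compact abelian $A$; smashing with $M\R_{\hat{A}}$ and using the shearing and Wirthm\"uller isomorphisms together with the identification $M\R_{\hat{A}}|_A \simeq MU_A$ (Proposition \ref{prop:MRP_ring_structure_restricts}) produces a long exact sequence
\[
\cdots \to \pi^A_{2n+1}(MU_A) \to \pi^{\hat{A}}_{n\rho+\sigma}(M\R_{\hat{A}}) \xrightarrow{a_\sigma} \pi^{\hat{A}}_{n\rho}(M\R_{\hat{A}}) \xrightarrow{\mathrm{res}} \pi^A_{2n}(MU_A) \to \cdots.
\]
Under the hypothesis, $\mathrm{res}$ is an isomorphism and $\pi^A_{2n+1}(MU_A) = 0$ by classical evenness of $MU_A$, so exactness forces $\pi^{\hat{A}}_{n\rho+\sigma}(M\R_{\hat{A}}) = 0$ for every $n$. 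The $RO(C_2)$-identity $n\rho - 1 = (n-1)\rho + \sigma$ then immediately translates this into $\pi^{\hat{A}}_{n\rho-1}(M\R_{\hat{A}}) = 0$, as required.

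I expect no essentially new obstacle to arise: every ingredient (the cofiber sequence, the shearing and Wirthm\"uller isomorphisms, the identification $M\R_{\hat{A}}|_A \simeq MU_A$, and the evenness of $MU_A$) is available for arbitrary compact abelian $A$ exactly as it is for a torus. The genuinely new content over Theorem \ref{thm: Hu-Kriz iso} is simply the clean observation that the restriction-isomorphism statement is not merely implied by, but equivalent to, the Evenness Conjecture.
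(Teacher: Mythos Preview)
Your proposal is correct. For the backward direction it is essentially identical to the paper's argument: the paper also invokes the long exact sequence \eqref{eq:a_sigma long sequence} (with $\mathbb{T}^n$ replaced by $G$) and defers to the reasoning in Proposition~\ref{prop: Evenness Conj implies Regularity Conj}, which unpacks to exactly your computation. Your explicit handling of the trivially-augmented case and the direct identity $n\rho-1=(n-1)\rho+\sigma$ are in fact slightly cleaner than the paper's indirect passage through the $\rho$-periodicity of $M\R P$.

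For the forward direction the two approaches diverge. You simply cite Theorem~\ref{thm: Hu-Kriz iso}, which already establishes the restriction isomorphism for all compact abelian $G$ under the Evenness Conjecture; this is logically sufficient. The paper instead gives an independent second proof: it uses that the Evenness Conjecture implies regularity of $\mathbf{R}$ (Proposition~\ref{prop: Evenness Conj implies Regularity Conj}), then adapts the argument of \cite[Lemma 5.23]{Hau22} by embedding $A$ into a torus $T_A$, choosing characters $V_k:T_A\to\mathbb{T}$ cutting out $A$, and using the cofiber sequences $\hat{B}/\widehat{\ker(Res_B^{T_A}(V_k))}_+\to S^0\to S^{V_k}$ together with regularity (for injectivity of Euler multiplication) and evenness (for surjectivity of restriction) to obtain inductively the isomorphism $\pi_{\rho*}^{\hat{B}}(\mathbf{MR})/(e_{Res_B^{T_A}(V_k)})\cong\pi_{\rho*}^{\hat{A}}(\mathbf{MR})$. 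Your route is shorter; the paper's route demonstrates that the regularity-based global-group-law machinery extends from tori to all compact abelian groups, tying the result back into Hausmann's framework.
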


\begin{proof}
    Assume Conjecture \ref{conj:MR-eta-is-even}, we know that $\mathbf{R}$ is regular by Proposition \ref{prop: Evenness Conj implies Regularity Conj}. The proof of the backwards direction of (1) of \cite[Lemma 5.23]{Hau22} goes through with the following alteration; we use the notation in the proof of \cite[Lemma 5.23]{Hau22}. Let $G=A$ be a compact abelian Lie group. Choose an embedding $i_A:A\to T_A$ of $A$ into a torus. Let $V_k : T_A \rightarrow \mathbb{T}$ be a character of the torus $T_A$ with $B \subset ker(V_k)$. The long exact sequence in $\mathbf{MR}$-cohomology obtained from the cofiber sequence \[ \hat{B}/\widehat{\mathrm{ker}(Res_B^{T_A}(V_k))}_+ \rightarrow S^0 \rightarrow S^{V_k} \] must be viewed as $RO(C_2)$-graded, a priori. However, in degrees $\rho *$, the kernel of multiplication by $e_{Res_B^{T_A}(V_k)}$ is zero by regularity, and the cokernel of the restriction is zero by the evenness conjecture. The resulting collection of short exact sequences implies the isomorphism \[ \pi_{\rho *}^{\hat{B}}( \mathbf{MR} )/(e_{Res_B^{T_A}(V_k)}) \xrightarrow{\cong} \pi_{\rho *}^{\hat{A}}( \mathbf{MR} ) \mathrm{.} \] The remainder of the proof follows from induction and purely algebraic properties of global group laws, as in \cite[Lemma 5.23]{Hau22}.

    On the other hand, if the restriction $Res^{\hat{G}}_G$ is an isomorphism for all compact abelian Lie groups $G$, then using the long exact sequence in (\ref{eq:a_sigma long sequence}) with $\mathbb{T}^n$ replaced by $G$ and $G$ replaced by $\hat{G}$, we deduce Conjecture \ref{conj:MR-eta-is-even} as in the proof of Proposition \ref{prop: Evenness Conj implies Regularity Conj}.
\end{proof}

\bibliographystyle{alpha}
\phantomsection\addcontentsline{toc}{section}{\refname}

\newcommand{\etalchar}[1]{$^{#1}$}

\end{document}